\newtheorem*{mainthm}{Theorem}
\newtheorem*{maincor}{Corollary}
\newtheorem{thm}{Theorem}[section]
\newtheorem{cor}[thm]{Corollary}
\newtheorem{lemma}[thm]{Lemma}
\newtheorem{prop}[thm]{Proposition}
\newtheorem{defn}[thm]{Definition}
\newtheorem{remark}[thm]{Remark}
\newcommand{\CH}{\mathrm{CH}}
\newcommand{\chat}{\widehat{\mathrm{CH}}}
\newcommand{\X}{\mathfrak{X}}
\newcommand{\Q}{\mathbb{Q}}
\newcommand{\C}{\mathbb{C}}
\newcommand{\Z}{\mathbb{Z}}
\newcommand{\KC}{\mathrm{KC}}
\newcommand{\ie}{\emph{i.e.},\;}
\newcommand{\spec}{\mathrm{Spec}}
\begin{document}
\title{Arithmetic Intersection Theory on Deligne-Mumford Stacks}
\author{Henri Gillet }
\address{Department of Mathematics, Statistics, and Computer Science\\
University of Illinois at Chicago\\
322 Science and Engineering Offices (M/C 249)\\
851 S. Morgan Street\\
Chicago, IL 60607-7045}
%\dedicatory{To Spencer Bloch}
\date{\today}
\thanks{Supported by NSF grant DMS-0500762, the Clay Mathematics Institute, and the Fields Institute}
\subjclass[2000]{Primary 14G40, 14A20; Secondary 19E08, 14C17}
 \begin{abstract}
In this paper the arithmetic Chow groups and their product structure are extended from the category of regular arithmetic varieties to regular Deligne-Mumford stacks proper over a general arithmetic ring. The method used also gives another construction of the product on the usual Chow groups of a regular Deligne-Mumford stack.
    \end{abstract}
\maketitle

\section*{Introduction} Because of the importance of moduli stacks in arithmetic
geometry, it is natural to ask whether the arithmetic intersection
theory introduced in \cite{Gillet-Soule-IHES-arith-intn-thy} can be
extended to stacks.  Indeed arithmetic intersection numbers on stacks and moduli spaces have been studied by a number of authors; see \cite{Kudla-Rap-Yang} and \cite{Brunier-Burgos-Kuhn}, for example.

Recall that the arithmetic Chow theory of \emph{op. cit.} has the following properties.
\begin{enumerate}
    \item $X\mapsto \chat^*(X)$ is a contravariant functor from the category of schemes
    which are regular flat and projective over $S=\mathrm{Spec}(\Z)$, to graded abelian groups
    \item $\chat^*(X)_\mathbb{Q}$ has a functorial graded ring structure
    \item $\chat^1(X)\simeq \widehat{\mathrm{Pic}}(X)$, the group of isomorphism classes
    of Hermitian line bundles on $X$. (A Hermitian line bundle $(L,h)$ on $X$, is a line
    bundle $L$ on $X$ together with a choice of a $C^\infty$ Hermitian metric $h$
    on the associated holomorphic line bundle $L(\mathbb{C})$ over the complex manifold $X(\mathbb{C})$).
    \item Each class in $\chat^p(X)$ is represented by a pair $(\zeta,g_\zeta)$
    with $\zeta=\sum_i[Z_i]$ a codimension $p$ algebraic cycle on $X$ and $g_\zeta$ a ``Green current''
    for $\zeta$, \emph{i.e.} a current of degree $(p-1,p-1)$ on $X(\mathbb{C})$ such that
    $$
    dd^c(g_\zeta)+\delta_\zeta
    $$
    is a $C^\infty$ $(p,p)$-form. Here $\delta_\zeta$ is the $(p,p)$-current
    $\sum_i\delta_{Z_i(\mathbb{C})}$ where $\delta_{Z_i(\mathbb{C})}$ is the current of integration associated to the analytic subspace
    $Z_i(\mathbb{C})\subset X(\mathbb{C})$
    \item There is an exact sequence, for each $p\geq 0$:
\begin{multline*}
\mathrm{CH}^{p,p-1}({X})\overset{\rho}\to H^{2p-1}_\mathcal{D}({X}_\mathbb{R},\mathbb{R}(p))\\
\to\widehat{\mathrm{CH}}^p({X})\to
\mathrm{CH}^p({X})\oplus{Z}^{p,p}(X_\mathbb{R}))
\to\mathrm{H}^{2p}_\mathcal{D}({X}_\mathbb{R},\mathbb{R}(p))
\end{multline*}
Here the $X_\mathbb{R}$ indicates that we are taking real forms on $X(\C)$ on which the anti-holomorphic involution induced by
complex conjugation acts by $(-1)^p$ on $(p,p)$-forms.
    \item There is a theory of Chern classes for Hermitian vector bundles
    $$
    (E,\|\;\|)\mapsto \widehat{C}_p(E,\|\;\|)
    $$
\end{enumerate}
Extending the definition of the Chow groups to stacks is straightforward.
A cycle on a stack is an element of the free abelian group on the set of integral substacks, and rational equivalence is defined similarly.
(See \cite{Gillet-Intersection-theory-on-stacks}.)  To define the arithmetic Chow groups, we must associate Green currents to cycles.
First we show that for a smooth separated stack $\mathfrak X$ over $\mathbb C$
the sheaves of differential forms are acyclic, and that the  groups  $H^q({\mathfrak X}({\mathbb C}),\Omega^p)$ are computed by the global
Dolbeault complexes.  If $\mathfrak X$ is proper, one can show that the Hodge spectral sequence degenerates, and that the cohomology
$H^q({\mathfrak X}({\mathbb C}),{\mathbb C})$ has a Hodge decomposition ``in the strong sense''.
In particular the $\partial\overline{\partial}$-lemma holds.
This allows one to give a definition of the arithmetic Chow groups analogous
to that of \cite{Gillet-Soule-IHES-arith-intn-thy} and \cite{Burgos-JAG-Arith-chow-rings}, though it does not give the product structure.

Before discussing how to define the product on the arithmetic Chow groups of stacks, let us briefly review
how the classical Chow groups and their product structure are defined for stacks. In the 1980's two different approaches to intersection theory on
stacks over fields were introduced; the first in \cite{Gillet-Intersection-theory-on-stacks} was via Bloch's
formula. This approach has not been applied to the arithmetic Chow groups of schemes,
never mind stacks, in part because Bloch's formula depends on Gersten's conjecture, which is not known for general regular schemes.
The other construction of intersection theory on stacks was by
Vistoli \cite{Vistoli-Intersection-thy-on-stacks}, using ``Fulton
style'' intersection theory, and in particular operational Chow groups. While Fulton's operational Chow groups make sense for regular schemes,
it is not clear how to construct the arithmetic Chow groups of schemes (or stacks), and their product structure, operationally.
The problem with both of these approaches is that the product on the arithmetic Chow
groups depends on the $*$- product for the Green current of two cycles. which is only
defined when the cycles intersect properly, and thus the moving lemma plays a key role in
the construction of the intersection product on the arithmetic Chow groups.  Note however that
combining the method of Hu \cite{J-Hu} with  Kresch's approach in \cite{Kresch} to the construction of the intersection
product on stacks over fields, via deformation to the normal cone, might provide a way around this problem.

In this paper, we shall use a construction that is a variant on the operational method.
It was first observed by Kimura \cite{Kimura} (see also \cite{Bloch-gillet-soule}) that if $X$ is a possibly singular variety proper over a field of characteristic zero then its operational Chow groups can be computed using hypercovers.
It follows from this result that for a proper variety $X$ over a field  of characteristic zero,
the operational Chow groups of $X$ are isomorphic to the inverse limit of $\mathrm{CH}^*(Y)$ over the category all surjective morphisms  $Y\to X$ with $Y$ smooth and projective.  This suggests using a similar construction for the arithmetic Chow groups of stacks.

Suppose  for a moment that we have extended the functor $\chat^*$ to the category of separated stacks over a fixed base $S$.
Then for each $p:V\to \mathfrak{X}$, with $p$ proper and
surjective, and $V$ a regular quasi-projective variety over $S$ we will have a natural
homomorphism
$p^*:\widehat{\mathrm{CH}}^*(\mathfrak{X})\to\widehat{\mathrm{CH}}^*(V)$
and hence a homomorphism
$$\widehat{\mathrm{CH}}^*(\mathfrak{X})\to \lim_{\overset{\longleftarrow}{p:V\to\mathfrak{X}} } \widehat{\mathrm{CH}}^*(V)$$
Since we already have well defined functorial products on the groups
$\widehat{\mathrm{CH}}^*(V)$, it follows that
$\underset{\leftarrow}{\lim}\, \widehat{\mathrm{CH}}^*(V)$ has a
natural product structure, which is contravariant with respect to
$\mathfrak{X}$.  A Hermitian vector
bundle   $\overline{E}=(E,h)$ on $\mathfrak{X}$ has Chern classes in
$\underset{\leftarrow}{\lim}\, \widehat{\mathrm{CH}}^*(V)$,
since the bundle pulls back to any $V$ over
$\mathfrak{X}$.
Note that a similar construction appears in \cite{Brunier-Burgos-Kuhn}, where they consider towers of Shimura varieties with level structures, rather than the underlying stack.

Now the key point is that, even though we do not, \emph{a priori}, have products and
pull-backs on $\widehat{\mathrm{CH}}^*(\mathfrak{X})$, we have:

\begin{mainthm} If $\X$ is regular Deligne-Mumford stack  which is flat and proper over
a base $S=\spec(\mathcal{O}_K)$ for $\mathcal{O}_K$ the
ring of integers in a number field $K$,
there is a canonical isomorphism
$$\lim_{\overset{\longleftarrow}{p:V\to\mathfrak{X}} } \widehat{\mathrm{CH}}^*(V)_\Q\to \widehat{\mathrm{CH}}^*(\mathfrak{X})_\Q $$
\end{mainthm}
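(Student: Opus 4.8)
The plan is to prove that the natural homomorphism $\phi\colon\chat^*(\mathfrak X)_\Q\to\varprojlim_{p\colon V\to\mathfrak X}\chat^*(V)_\Q$, assembled from the pull-back maps $p^*$ (which exist because the extended functor $\chat^*(-)$ is contravariant), is an isomorphism; the map in the statement is then $\phi^{-1}$. The tool is the five lemma, applied to the morphism induced by $\phi$ between the five-term exact sequence of property (5) for $\mathfrak X$ and the inverse limit, over the index category $\mathcal C$ of proper surjective $p\colon V\to\mathfrak X$ with $V$ regular and quasi-projective over $S$, of the corresponding sequences for the $V$. Thus the theorem reduces, for each $p\geq0$, to the descent statements that $\mathrm{CH}^{p}(\mathfrak X)_\Q$, $\mathrm{CH}^{p,p-1}(\mathfrak X)_\Q$, $H^{2p-1}_{\mathcal D}(\mathfrak X_\mathbb{R},\mathbb{R}(p))$, $H^{2p}_{\mathcal D}(\mathfrak X_\mathbb{R},\mathbb{R}(p))$ and $Z^{p,p}(\mathfrak X_\mathbb{R})$ each map isomorphically to the inverse limit over $\mathcal C$ of their values on the $V$, together with exactness of the inverse limit on these five-term sequences.

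The category $\mathcal C$ is cofiltered: given two objects, resolve their fiber product over $\mathfrak X$ by an alteration in the sense of de Jong — this is where $\Q$-coefficients become essential — and $\mathcal C$ likewise contains a generically finite object $\pi_0\colon V_0\to\mathfrak X$ of some generic degree $d$. Since classical proper push-forward to a Deligne--Mumford stack is available, $(\pi_0)_*\pi_0^*=d\cdot\mathrm{id}$ on $\mathrm{CH}^*(\mathfrak X)_\Q$, so $\pi_0^*$ is split injective; and a compatible family in the inverse limit lies in the image of $\mathrm{CH}^*(\mathfrak X)_\Q$ because its value on $V_0$ has equal pull-backs under the two projections from an alteration of $V_0\times_{\mathfrak X}V_0$ and therefore descends. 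This is the input of Kimura type \cite{Kimura}, and it is also what endows $\mathrm{CH}^*(\mathfrak X)_\Q$ with its product; the same argument, applied to the higher Chow groups, to the groups of real $(p,p)$-forms — using that the sheaves of $C^\infty$ forms on $\mathfrak X(\C)$ are acyclic and admit descent — and to real Deligne cohomology, gives the remaining descent statements, the last via the fact that the rational Betti cohomology and the Hodge filtration of the proper smooth orbifold $\mathfrak X(\C)$ agree with those of its coarse space and are, by transfer, direct summands of those of every cover. Finally, because $\mathfrak X$ is of finite type over $S$ the cofiltered category $\mathcal C$ has a countable cofinal tower, along which the transition maps become surjective after $\otimes\Q$ (split by $\tfrac1{\deg}$ of push-forward); hence $\lim^1$ vanishes and the inverse limit is exact on the five-term sequences.

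The step I expect to be the main obstacle is the descent for real Deligne cohomology: one must verify not merely that the rational cohomology of $\mathfrak X(\C)$ is recovered as the inverse limit over all covers — delicate precisely because no single member of $\mathcal C$ dominates the rest — but that the identification is compatible with the Hodge filtration and with the real structure on which complex conjugation acts by $(-1)^p$, so that it passes to $H^{2p-1}_{\mathcal D}$ and $H^{2p}_{\mathcal D}$; the point is that $\mathfrak X(\C)$ is, in the appropriate analytic sense, a quotient of $V_0(\C)$ by the groupoid it presents, so that a family of classes invariant over $V_0\times_{\mathfrak X}V_0$ descends, necessarily as a morphism of Hodge structures. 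Granting the descent statements, the inverse isomorphism acquires the expected explicit form $\alpha=(\alpha_V)\mapsto\tfrac1d\,(\pi_0)_*\alpha_{V_0}$, once one also establishes the arithmetic projection formula and base-change for $\chat^*(-)_\Q$; the reason to route the proof through the five lemma instead is that push-forward of Green currents is defined by integration along the fibre, and its compatibility with $dd^c$ and with the $*$-product would have to be checked directly on the orbifold $\mathfrak X(\C)$.
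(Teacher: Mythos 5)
The foundational step of your argument does not exist: you build your map $\phi\colon\chat^*(\X)_\Q\to\varprojlim\chat^*(V)_\Q$ out of pull-backs $p^*$ along proper surjective morphisms $p\colon V\to\X$, justified by saying that ``the extended functor $\chat^*(-)$ is contravariant.'' But such a $p$ is in general not flat, and pull-back of arithmetic cycles along a non-flat morphism requires a moving lemma to put representatives in good position relative to $p$ --- exactly the tool that is unavailable for stacks, and the reason the paper rejects the operational/Vistoli route in its introduction. Contravariance of $\chat^*(-)_\Q$ on stacks is a \emph{consequence} of the theorem (it is deduced immediately after the proof), not an input, so your argument is circular at the outset. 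The paper constructs the map in the opposite direction: for $\alpha=(\alpha_f)$ in the inverse limit and $f\colon X\to\X$ generically finite of degree $d$, it takes $\tfrac1d f_*(\alpha_f)$, using only proper push-forward of cycles and of currents, which needs no moving lemma; independence of $f$ follows from the projection formula after dominating two covers by a third. You mention this formula only as an a posteriori description of the inverse, and your stated reason for avoiding it (compatibility of current push-forward with $dd^c$) is not a real obstacle, since push-forward of currents is dual to pull-back of forms and commutes with $d$ and $d^c$. The genuine subtlety you thereby never confront is that $f_*(\alpha_f)$ a priori lands in the larger group where $\omega=dd^c g+\delta_\zeta$ is merely a closed current; one needs Lemma \ref{inverse-limit-of-forms} (a compatible family of smooth forms on all covers descends to a smooth form on $\X(\C)$) to see that the image lies in $\chat^p(\X)$ itself.

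The remainder of your plan --- comparing the five-term exact sequence of Proposition \ref{stack-exact-seq-prop} with the inverse limit of the corresponding sequences for the $V$, the descent isomorphisms for $\CH^p$, $\CH^{p,p-1}$, the Deligne cohomology groups and the $(p,p)$-forms, cofilteredness of the index category via de Jong's alterations, and the splittings by $\tfrac1{\deg}$ of push-forward that control $\lim^1$ --- is sound and is essentially how the paper proceeds, with the descent for the Chow-type terms supplied by the weight-complex theorem (contractibility of the complex of $K_0$-motives of a nonsingular proper hypercover) rather than by a bare Kimura-style two-projections argument. One smaller inaccuracy: you propose to read off the Hodge filtration of $\X(\C)$ from its coarse space, but the coarse space is in general singular and does not carry the right Hodge theory; the paper instead proves acyclicity of the Dolbeault sheaves on the separated stack and degeneration of its Hodge spectral sequence directly. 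So the skeleton is right, but without replacing your nonexistent $\phi$ by the push-forward map the proof does not get off the ground.
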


The idea of the proof is to show that the appropriate variants of this statement are true
for differential forms, and also for both the usual Chow groups (tensored with $\mathbb Q$) and for cohomology.

\begin{maincor}
There is a product structure on
$\widehat{\mathrm{CH}}^*(\mathfrak{X})_\Q$ which is  functorial in
$\mathfrak{X}$, and the theory of  Chern classes for Hermitian vector bundles on arithmetic varieties extends to
Hermitian vector bundles on stacks over $S$.
\end{maincor}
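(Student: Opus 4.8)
Granting the Theorem, the Corollary is essentially transport of structure together with one piece of genuine work. The plan is to observe that the inverse limit is a graded ring and is contravariantly functorial, and then to read off Chern classes as compatible systems. Write $L(\mathfrak Y):=\varprojlim_{p\colon V\to\mathfrak Y}\chat^*(V)_\Q$, the limit over the category of proper surjective $p\colon V\to\mathfrak Y$ with $V$ regular and quasi-projective over $S$ --- since $\mathfrak Y$ (hence $V$) is proper over $S$, such $V$ are in fact projective over $S$, so the theory of \cite{Gillet-Soule-IHES-arith-intn-thy} applies to them. Each $\chat^*(V)_\Q$ is a graded ring and every pullback $\chat^*(V)_\Q\to\chat^*(V')_\Q$ along a morphism $V'\to V$ over $\mathfrak Y$ is a homomorphism of graded rings, so $L(\mathfrak Y)$ is a graded ring; via the canonical isomorphism $L(\X)\xrightarrow{\ \sim\ }\chat^*(\X)_\Q$ of the Theorem this ring structure is carried to $\chat^*(\X)_\Q$. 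One checks, using the comparison isomorphisms for forms, Chow groups and cohomology that enter the proof of the Theorem, that the resulting product is compatible with the product on $\CH^*(\X)_\Q$ and with the exact sequence relating $\chat^p(\X)$ to Chow groups and Deligne cohomology $H^*_{\mathcal D}$.

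Next, $L$ is made into a contravariant functor. Given $f\colon\X'\to\X$ and $\alpha=(\alpha_V)\in L(\X)$, define $f^*\alpha\in L(\X')$ as follows. Fix a resolution $q\colon V'\to\X'$. The fibre product $V'\times_{\X}V$, formed through $f\circ q$ and a resolution $p\colon V\to\X$, is an algebraic space that is proper and surjective over $V'$; applying de Jong's theorem on alterations (after first passing to a suitable closed subscheme to guarantee generic finiteness over $V'$) one obtains a regular quasi-projective scheme $W$ over $S$ with $r\colon W\to V'$ proper, surjective and generically finite of some degree $d>0$, and a morphism $\phi\colon W\to V$ with $p\circ\phi=f\circ q\circ r$. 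Because $r$ is proper, there is a pushforward $r_*$ on arithmetic Chow groups, and the projection formula gives $r_*r^*=d\cdot\mathrm{id}$, so $r^*\colon\chat^*(V')_\Q\to\chat^*(W)_\Q$ is injective; one therefore sets $(f^*\alpha)_{V'}$ to be the unique class with $r^*(f^*\alpha)_{V'}=\phi^*\alpha_V$. Independence of the auxiliary data $(p,W,\phi)$, the compatibility of these classes as $V'$ varies over resolutions of $\X'$, the ring-homomorphism property, and $(f\circ g)^*=g^*\circ f^*$ are all verified by passing to common refinements --- using that fibre products over $\X$ and over $\X'$ are dominated by resolutions --- and invoking the compatibility built into $\alpha$ together with the corresponding facts on arithmetic varieties. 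Transporting through the Theorem gives a functorial pullback $f^*\colon\chat^*(\X)_\Q\to\chat^*(\X')_\Q$ which is a homomorphism of graded rings; in particular the product on $\chat^*(\X)_\Q$ is functorial in $\X$.

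For Chern classes, let $\bar E=(E,h)$ be a Hermitian vector bundle on $\X$, with $h$ a smooth Hermitian metric on $E$ over $\X(\C)$ (the required analysis on smooth separated stacks over $\C$ --- acyclicity of the sheaves of forms, the Hodge decomposition, the $\partial\overline\partial$-lemma, and hence the formalism of smooth forms, metrics and Green currents --- having been established in the earlier part of the paper). For each resolution $p\colon V\to\X$, the pair $\bar E_V:=(p^*E,p^*h)$ is a Hermitian vector bundle on the regular arithmetic variety $V$, with arithmetic Chern classes $\widehat{C}_p(\bar E_V)\in\chat^p(V)$. For a morphism $\phi\colon W\to V$ over $\X$ one has $\phi^*\bar E_V\cong\bar E_W$ as Hermitian bundles, so by functoriality of arithmetic Chern classes on arithmetic varieties $\phi^*\widehat{C}_p(\bar E_V)=\widehat{C}_p(\bar E_W)$; thus the $\widehat{C}_p(\bar E_V)$ form a compatible system, i.e. an element of $L(\X)\cong\chat^p(\X)_\Q$, which we define to be $\widehat{C}_p(\bar E)$. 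The Whitney sum formula, functoriality in $\X$ (compatibility with the maps $f^*$ above), and the normalisation identifying $\widehat{C}_1$ of a Hermitian line bundle with its class in $\widehat{\mathrm{Pic}}(\X)$ inside $\chat^1(\X)_\Q$ all follow because they hold on every $V$ and the comparison map is an isomorphism.

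The main obstacle is the functoriality of $L$ for a morphism $f\colon\X'\to\X$ which is neither proper nor surjective: the resolution category of $\X'$ receives no functor from that of $\X$, so $f^*$ must be constructed by hand as above. The two essential inputs there, both absent from the purely formal parts of the argument, are de Jong's alterations --- needed to replace the algebraic-space fibre products over the stack by regular quasi-projective schemes and to arrange generic finiteness, and this is where one genuinely uses that the base is $\spec(\mathcal{O}_K)$ with characteristic-zero generic fibres and that pullbacks in arithmetic intersection theory require regular targets --- and the trace identity $r_*r^*=d\cdot\mathrm{id}$, which is precisely what forces the passage to $\Q$-coefficients. Once $f^*$ is available, the remaining verifications and the Chern class axioms are inherited termwise from the theory on arithmetic varieties.
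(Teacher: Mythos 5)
Your overall strategy is the same as the paper's: the corollary is obtained by transporting the ring structure, functoriality, and Chern classes from the inverse limit $\varprojlim_{V\to\X}\chat^*(V)_\Q$ through the isomorphism of the Theorem, with $\widehat{C}_p(\overline E)$ defined by the compatible system $\widehat{C}_p(p^*E,p^*h)$ on resolutions. The paper disposes of functoriality in one line --- any morphism of stacks can be dominated by a morphism of non-singular proper hypercovers, and a class is then an element of $\mathrm{H}^0(i\mapsto\chat^p(V_i)_\Q)$, so the descent data needed to land in the limit for $\X'$ is built into the definition and the only nontrivial input is the already-proved identification of that $\mathrm{H}^0$ with the limit. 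You instead build $f^*$ by hand, one resolution $V'\to\X'$ at a time, via an alteration $r\colon W\to V'$ and the trace identity $r_*r^*=d\cdot\mathrm{id}$.

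That is where your argument has a genuine gap. The identity $r_*r^*=d\cdot\mathrm{id}$ gives \emph{injectivity} of $r^*$, hence uniqueness of a class $(f^*\alpha)_{V'}$ with $r^*(f^*\alpha)_{V'}=\phi^*\alpha_V$; it does not give \emph{existence}, and a priori $\phi^*\alpha_V$ need not lie in the image of $r^*\colon\chat^*(V')_\Q\to\chat^*(W)_\Q$. (Note that $W$, being proper and surjective over $\X'$ but not over $\X$ when $f$ is neither proper nor surjective, is not a resolution of $\X$, so the compatibility built into $\alpha$ does not directly apply to it.) To repair this you must either (i) prove a descent statement: $\phi^*\alpha_V$ has equal pullbacks under the two maps from a resolution of $W\times_{V'}W$ to $W$ --- which needs a further round of domination through a resolution of $V\times_{\X}V$ together with injectivity of pullback along alterations --- and then invoke the fact that $\chat^p(V')_\Q$ is the equalizer of $\chat^p(W)_\Q\rightrightarrows\chat^p(W_1)_\Q$, which is exactly the hypercover descent the paper establishes via Theorem~\ref{weight-complex-is-exact}; or (ii) define $(f^*\alpha)_{V'}:=\tfrac1d r_*\phi^*\alpha_V$ outright, in which case compatibility in $V'$, multiplicativity (since $r_*$ is not a ring map), and independence of $(W,\phi)$ all require separate proofs. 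Either way the missing content is precisely the cohomological descent for $\chat^*_\Q$ along proper surjective generically finite maps that the paper's hypercover formulation supplies; as written, your construction of $f^*$ does not go through.
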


Following preliminaries on the Dolbeault cohomology of stacks in section 1,  in section \ref{G-K-stacks} we discuss the
$G$-theory and $K$-theory of stacks. We show that the isomorphism (due to Grothendieck)
between the graded vector space associated to the
$\gamma$-filtration on $K_0$ and the Chow groups of a regular scheme also holds for regular Deligne-Mumford stacks.
This gives yet another construction of the product on the Chow groups, with rational coefficients, of a stack.
We then go on to show that the motivic weight complex of a regular \emph{proper}
Deligne-Mumford stack over $S$ is (up to homotopy) concentrated in degree zero.
Thus to each regular proper Deligne-Mumford stack we can associate a pure motive.
This extends a result of Toen for varieties over perfect fields. We then prove the main
theorem in section \ref{ch-hat-proper-stacks}. Finally in section 5 we consider the case of non-proper stacks.

Throughout the paper we shall fix a base $S$ which is the spectrum of the ring of integers in a number field or
more generally of an arithmetic ring in the sense of \cite{Gillet-Soule-IHES-arith-intn-thy}.
In particular a variety (over $S$) will be an integral scheme which is separated and of finite type over $S$.

I would like to thank the referee for a very careful reading of the
manuscript and for many valuable comments and questions.

\section{Dolbeault Cohomology of Stacks}\label{Dolbeault}

Let $\mathfrak{X}$ be a regular Deligne-Mumford stack of finite type over
the field of complex numbers $\mathbb{C}$.  Let
$\mathfrak{X}(\mathbb{C})$ be the associated smooth stack in the
category of complex analytic spaces.

Recall that $\mathfrak{X}$ is a category cofibered in groupoids
over the category of algebraic spaces; since $\mathfrak{X}$ is a
Deligne-Mumford stack, we will, equivalently, view $\mathfrak{X}$
as cofibered over the category of schemes over $\mathbb{C}$. A
1-morphism from a scheme $U$ to $\mathfrak{X}$ is really an object
in the fiber of $\mathfrak{X}$ over $U$.  A morphism between two
objects in the category $\mathfrak{X}$ is \'etale (respectively
surjective) if the morphism of the associated schemes is \'etale
(respectively surjective), and a morphism  $p:U \to \mathfrak{X}$
from a scheme  to $\mathfrak{X}$ is \'etale  (respectively
surjective) if for every morphism $f:X\to \mathfrak{X}$ with
domain a scheme, the projection $f\times_\mathfrak{X} p\to f$ is
\'etale  (respectively surjective).

Following the discussion in section 12.1 of
\cite{Laumon-M-B-stacks} and definition 4.10 of \cite{deligne-mumford},
we can take the \'etale site of
$\mathfrak{X}$ to consist of all schemes $p:U\to \mathfrak{X}$
\'etale over $\mathfrak{X}$, with covering families  consisting of
those families of morphisms in the category $\mathfrak{X}$ for
which the associated family of morphisms of schemes is a covering
family for the \'etale topology.

Because every Deligne-Mumford stack admits an \'etale cover
$\pi:U\to \mathfrak{X}$ by a scheme, to give a sheaf of sets $F$
on the \'etale site of $\mathfrak{X}$ is equivalent to giving a
sheaf $F_U$ together with an isomorphism between the two
pull-backs of $F_U$ to $U\times_\mathfrak{X} U$ satisfying the
cocycle condition of \cite{Laumon-M-B-stacks} 12.2.1.

Similarly, to give a sheaf $F$ on the stack
$\mathfrak{X}(\mathbb{C})$ over the category of analytic spaces,
is equivalent to giving a sheaf  $F_U$ for any \'etale cover
$\pi:U\to \mathfrak{X}$  in the classical topology on
$U(\mathbb{C})$ together with an isomorphism between the two
pull-backs of $F_U$ to
$U(\mathbb{C})\times_{\mathfrak{X}(\mathbb{C})} U(\mathbb{C})$
satisfying the cocycle condition.

Note that if $f:U\to V$ is an \'etale map between complex analytic manifolds, then we have \emph{isomorphisms}, for all $(p,q)$:
$$
f^*:f^{-1}\mathcal{A}^{p,q}_U \to \mathcal{A}^{p,q}_V
$$
where $\mathcal{A}^{p,q}_V$ is the usual sheaf of $(p,q)$-forms.
We therefore have the sheaf $\mathcal{A}^{p,q}_\mathfrak{X}$ of
differential forms of type $(p,q)$ on the stack
$\mathfrak{X}(\mathbb{C})$, together with the $\partial$ and
$\overline{\partial}$ operators.  Notice that if
$p:U\to\mathfrak{X}$ is \'etale, then the group of automorphisms
of $p$ over $U$ acts \emph{trivially} on $\mathcal{A}^{p,q}_U$.

The total de Rham complex of $\mathfrak{X}$ is a resolution of the
constant sheaf $\mathbb{C}$, since this can be checked locally in
the \'etale topology, and similarly the complexes
$(\mathcal{A}^{p,*}_\mathfrak{X},\overline{\partial})$ are
resolutions of the sheaves of holomorphic $p$-forms
$\Omega^p_\mathfrak{X}$.  We have the usual Hodge spectral
sequence:
$$E_1^{p,q}(\mathfrak{X})=H^q(\mathfrak{X},\Omega^p)\Rightarrow H^{p+q}(\mathfrak{X},\mathbb{C})\;.$$

If $U$ is a complex manifold, we also have for each $(p,q)$, the sheaf $\mathcal{D}^{p,q}_{U}$ of $(p,q)$-forms with distribution coefficients; if $V\subset U$,  $\mathcal{D}^{p,q}_{U}(V)$ is the bornological dual ${D}^{p,q}(V)$ of the Frechet space $A^{n-p,n-q}_c(V)$ of compactly supported
$(n-p,n-q)$-forms on $V$, where $n=\mathrm{dim}(V)$.  If $p:V\to V'$ is \'etale, we have a push forward map $p_*:A^{n-p,n-q}_c(V)\to A^{n-p,n-q}_c(V')$ which is a continuous map of Frechet spaces inducing a pull back map
$p^*:D^{p,q}(V')\to D^{p,q}(V)$, thus for
$\mathfrak{X}$ a Deligne-Mumford stack over $\mathbb{C}$  we get a sheaf $\mathcal{D}^{p,q}_{\mathfrak{X}}$ of $(p,q)$-forms with
distribution coefficients on $\mathfrak{X}(\mathbb{C})$.  There is a natural inclusion
$\mathcal{A}^{*,*}_{\mathfrak{X}}\subset\mathcal{D}^{*,*}_{\mathfrak{X}}$, the operators $\partial$ and
$\overline{\partial}$ extend to $\mathcal{D}^{*,*}_{\mathfrak{X}}$, and the inclusion is a quasi-isomorphism of double complexes.
It follows that the Hodge spectral sequence also arises as the cohomology spectral sequence of the filtered complex associated to the
double complex of sheaves $\mathcal{D}^{*,*}_{\mathfrak{X}}$ with the Hodge filtration.

Note that in general on a smooth Artin or Deligne-Mumford stack
over the complex numbers, the sheaves
$\mathcal{A}^{*,*}_\mathfrak{X}$ and $\mathcal{D}^{*,*}_{\mathfrak{X}}$ need not be acyclic. For example
this is not the case for $B\mathbb{G}L_{n,\mathbb{C}}$, nor for the affine line with the origin doubled.
However we have:
\begin{prop}\label{acyclic}
If $\mathfrak X$ is a smooth \emph{separated} Deligne-Mumford stack over $\mathbb C$, then the sheaves
$\mathcal{A}^{*,*}_\mathfrak{X}$ and $\mathcal{D}^{*,*}_{\mathfrak{X}}$ are acyclic.
\end{prop}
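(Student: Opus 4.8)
The plan is to reduce the assertion to a local statement about quotients by finite groups, by descending to the coarse moduli space. Since $\mathfrak{X}$ is a separated Deligne--Mumford stack of finite type over $\C$, its diagonal, hence its inertia, is finite, so by the theorem of Keel--Mori it has a coarse moduli space $\pi\colon\mathfrak{X}\to X$ with $X$ a separated algebraic space of finite type over $\C$, with $\pi$ proper, and such that \'etale-locally on $X$ there is an isomorphism $\mathfrak{X}\cong[U/G]$ for $G$ a finite group acting on an affine scheme $U$, under which $X$ is locally the quotient $U/G$. Analytifying, $\pi\colon\mathfrak{X}(\C)\to X(\C)$ becomes a proper morphism of complex analytic spaces, $X(\C)$ is Hausdorff and paracompact, and \'etale-locally on $X(\C)$ one has $\mathfrak{X}(\C)\cong[U(\C)/G]$ with $U(\C)$ a complex manifold.

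Let $\mathcal{F}$ denote either $\mathcal{A}^{p,q}_\mathfrak{X}$ or $\mathcal{D}^{p,q}_\mathfrak{X}$. First I would compute the higher direct images $R^i\pi_*\mathcal{F}$. This is a local question on $X(\C)$, so I may assume $\mathfrak{X}(\C)=[U(\C)/G]$; writing $\rho\colon U(\C)\to U(\C)/G=X(\C)$ for the quotient map, which is finite, $\mathcal{F}$ corresponds to the $G$-equivariant sheaf $\mathcal{F}_U$ (namely $\mathcal{A}^{p,q}_{U(\C)}$, resp.\ $\mathcal{D}^{p,q}_{U(\C)}$), and $R\pi_*\mathcal{F}=(R\rho_*\mathcal{F}_U)^G$. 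Since $\rho$ is finite, $R^i\rho_*\mathcal{F}_U=0$ for $i>0$; and since $\mathcal{F}_U$ is a sheaf of $\C$-vector spaces and $G$ is finite, the functor of $G$-invariants is exact, so $R^i\pi_*\mathcal{F}=0$ for $i>0$ on all of $X(\C)$.

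Next I would show $\pi_*\mathcal{F}$ is a soft sheaf on $X(\C)$. Because a sheaf of modules over a soft sheaf of rings on a paracompact space is soft, it is enough to check that the sheaf of rings $\pi_*\mathcal{A}^{0,0}_\mathfrak{X}$ is soft, both $\pi_*\mathcal{A}^{p,q}_\mathfrak{X}$ and $\pi_*\mathcal{D}^{p,q}_\mathfrak{X}$ being modules over it. Locally, $\pi_*\mathcal{A}^{0,0}_\mathfrak{X}$ is the sheaf of $G$-invariant smooth functions on $U(\C)$; given a locally finite open cover of $X(\C)$, I would pull it back to $U(\C)$, pass to a $G$-stable refinement, take a subordinate smooth partition of unity, and average it over $G$ to get an invariant one. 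Thus $\pi_*\mathcal{A}^{0,0}_\mathfrak{X}$ is soft, hence so is $\pi_*\mathcal{F}$, and since $X(\C)$ is paracompact Hausdorff, $H^i(X(\C),\pi_*\mathcal{F})=0$ for $i>0$.

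Finally, feeding these two facts into the Leray spectral sequence $H^a(X(\C),R^b\pi_*\mathcal{F})\Rightarrow H^{a+b}(\mathfrak{X}(\C),\mathcal{F})$ yields $H^i(\mathfrak{X}(\C),\mathcal{F})=0$ for $i>0$, which is the assertion, and the argument applies equally to $\mathcal{A}^{*,*}_\mathfrak{X}$ and $\mathcal{D}^{*,*}_\mathfrak{X}$. The one substantive input is the reduction to quotients by finite groups: separatedness is precisely what provides the coarse space and the local presentation $[U/G]$ (and the statement does fail without it, as for the affine line with the origin doubled, where $\pi$ fails to be proper), and once one is in such a chart everything follows formally from the vanishing of higher direct images of a finite morphism together with the averaging trick over $G$ — equivalently, the vanishing of the cohomology of the finite group $G$ with coefficients in $\C$-vector spaces. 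Accordingly I expect this reduction, rather than any analytic estimate, to be the only real point.
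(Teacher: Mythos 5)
Your proposal is correct and follows essentially the same route as the paper: reduction to the coarse space, the analytic-local presentation $\pi^{-1}(V)\cong[U/G]$ afforded by separatedness, a transfer/averaging argument over the finite group $G$, and a partition-of-unity (softness) argument on $X(\mathbb{C})$. The paper merely leaves the Leray spectral sequence bookkeeping implicit, referring to a ``standard transfer argument'' and a ``standard argument'' with partitions of unity where you have spelled out $R^i\pi_*\mathcal{F}=0$ and the softness of $\pi_*\mathcal{F}$.
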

\begin{proof}
We know (\cite{Laumon-M-B-stacks} 19.1, \cite{deligne-rapoport} Ch. I. section 8, and \cite{Keel-Mori})
that $\mathfrak X$ has a ``coarse space'' $X$ which is a separated algebraic space, and for which the map
$\pi:{\mathfrak X}\to X$ is finite and induces a bijection on (isomorphism classes of) geometric points.
Furthermore, given an \'etale cover $p:P\to\mathfrak{X}$, a closed point
$\xi:\mathrm{Spec}(\mathbb{C})\to \mathfrak{X}$ and a lifting $x:\mathrm{Spec}(\mathbb{C})\to P$ of $\xi$,
the map $\pi^*$ induces an isomorphism between $\mathcal{O}_{X,\pi(\xi)}^h$ and the invariants
$\mathcal{O}_{P,x}^h/G$ of the action of the (finite) group of automorphisms of $\xi$ on the Henselization of the local ring of $P$ at $x$
(see \cite{deligne-rapoport} Ch. I. 8.2.1, and \cite{Laumon-M-B-stacks}, 6.2.1).
Since $\mathfrak X$ is \emph{separated}, it is straightforward to check that this isomorphism extends to an open neighborhood $U$ of
$x$ in the analytic topology, giving an isomorphism $[U/G]\to\pi^{-1}(V)$ where $V\subset X$ is an open neighborhood of
$\pi(\xi)$. (See
II.5.3 of \cite{Bogaart},  which is based on 2.8 of \cite{Vistoli-Intersection-thy-on-stacks}.) Since $[U/G]=\pi^{-1}(V)$ is a finite \'etale groupoid,
a standard transfer argument shows that the restrictions of the sheaves
$\mathcal{A}^{*,*}_\mathfrak{X}$ and $\mathcal{D}^{*,*}_{\mathfrak{X}}$ to $[U/G]$ are acyclic.
Now take a cover of the analytic space $X(\mathbb{C})$ by open sets $V$ such that $\pi^{-1}(V)$ is a finite \'etale groupoid,
and using a partition of unity subordinate to this cover, the proof finishes by a standard argument.
\end{proof}
We immediately obtain:
\begin{cor}
Let $\mathfrak{X}$ be a Deligne-Mumford stack smooth and separated over the complex numbers $\mathbb{C}$. Since the sheaves  $\mathcal{D}^{*,*}_{\mathfrak{X}}$ and $\mathcal{A}^{*,*}_{\mathfrak{X}}$ are acyclic, the groups $H^*(\mathfrak{X},\mathbb{C})$ and $H^*(\mathfrak{X},\Omega^*)$ are computed by the global de Rham and Dolbeault complexes of $\mathfrak{X}(\mathbb{C})$ respectively.
\end{cor}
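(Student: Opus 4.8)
The plan is simply to apply the standard principle that an acyclic resolution of a sheaf computes its cohomology, now in the setting of sheaves on the analytic stack $\mathfrak X(\mathbb C)$. Recall from the discussion preceding Proposition~\ref{acyclic} that the total de Rham complex $\mathcal A^{\bullet}_{\mathfrak X}$, with $\mathcal A^{k}_{\mathfrak X}=\bigoplus_{p+q=k}\mathcal A^{p,q}_{\mathfrak X}$ and differential $\partial+\overline{\partial}$, is a resolution of the constant sheaf $\mathbb C$; that for each fixed $p$ the complex $(\mathcal A^{p,\bullet}_{\mathfrak X},\overline{\partial})$ is a resolution of $\Omega^{p}_{\mathfrak X}$; and that the analogous statements hold with $\mathcal A$ replaced by $\mathcal D$ throughout, the inclusion $\mathcal A^{*,*}_{\mathfrak X}\subset\mathcal D^{*,*}_{\mathfrak X}$ being a quasi-isomorphism. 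All these complexes are bounded, of length at most $2\dim\mathfrak X$. By Proposition~\ref{acyclic} every one of the sheaves $\mathcal A^{p,q}_{\mathfrak X}$ and $\mathcal D^{p,q}_{\mathfrak X}$ is acyclic on $\mathfrak X(\mathbb C)$.

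First I would invoke the hypercohomology spectral sequence of a bounded complex of sheaves $\mathcal F^{\bullet}$ on $\mathfrak X(\mathbb C)$, namely $E_1^{s,t}=H^{t}(\mathfrak X(\mathbb C),\mathcal F^{s})\Rightarrow\mathbb H^{s+t}(\mathfrak X(\mathbb C),\mathcal F^{\bullet})$. Applying it to $\mathcal F^{\bullet}=\mathcal A^{\bullet}_{\mathfrak X}$ and to $\mathcal F^{\bullet}=(\mathcal A^{p,\bullet}_{\mathfrak X},\overline{\partial})$, acyclicity forces $E_1^{s,t}=0$ for $t>0$, so the spectral sequence degenerates at $E_2$ and
\[
\mathbb H^{n}(\mathfrak X(\mathbb C),\mathcal F^{\bullet})\;\cong\;H^{n}\bigl(\Gamma(\mathfrak X(\mathbb C),\mathcal F^{\bullet})\bigr),
\]
i.e.\ the hypercohomology of the complex equals the cohomology of its complex of global sections. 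On the other hand a resolution $\mathcal G\to\mathcal F^{\bullet}$ is in particular a quasi-isomorphism of complexes of sheaves, hence induces an isomorphism $H^{n}(\mathfrak X(\mathbb C),\mathcal G)\cong\mathbb H^{n}(\mathfrak X(\mathbb C),\mathcal F^{\bullet})$. Combining the two identifications with $\mathcal G=\mathbb C$, resp.\ $\mathcal G=\Omega^{p}_{\mathfrak X}$, gives exactly that $H^{*}(\mathfrak X,\mathbb C)$ is computed by the global de Rham complex $(\Gamma(\mathfrak X(\mathbb C),\mathcal A^{\bullet}_{\mathfrak X}),\partial+\overline{\partial})$ and that $H^{q}(\mathfrak X,\Omega^{p})$ is computed by the global Dolbeault complex $(\Gamma(\mathfrak X(\mathbb C),\mathcal A^{p,\bullet}_{\mathfrak X}),\overline{\partial})$; running the same argument with $\mathcal D^{p,q}$ in place of $\mathcal A^{p,q}$ shows that the de Rham and Dolbeault complexes of forms with distribution coefficients compute the same groups.

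There is essentially no obstacle here beyond bookkeeping: the one point that merits a word of care is that the usual homological algebra of acyclic resolutions and of the hypercohomology spectral sequence applies verbatim to the category of sheaves on the site $\mathfrak X(\mathbb C)$, which it does, since that category is a Grothendieck abelian category with left-exact global sections functor — precisely the setting in which these constructions are developed. Hence the corollary is immediate from Proposition~\ref{acyclic}.
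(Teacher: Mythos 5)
Your argument is correct and is exactly the one the paper intends: the corollary is stated with ``We immediately obtain'' precisely because it is the standard fact that a complex of acyclic sheaves resolving $\mathbb{C}$ (resp.\ $\Omega^p_{\mathfrak X}$) computes its cohomology via global sections, which is what you spell out with the hypercohomology spectral sequence. No discrepancy with the paper's (implicit) proof.
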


If $f:\mathfrak{X}\to\mathfrak{Y}$ is a proper, representable, morphism of relative dimension $d$ between regular stacks over $\mathbb{C}$, there is a push forward map:
$$
f_*:f_*(\mathcal{D}^{p,q}_{\mathfrak{X}})\to \mathcal{D}^{p-d,q-d}_{\mathfrak{Y}}
$$
which is  defined as follows.
If $p:U\to \mathfrak{Y}$ is an \'{e}tale morphism from a
scheme to $\mathfrak{Y}$, let
$\pi_U: \mathfrak{X}\times_\mathfrak{Y} U \to U$ be the induced proper morphism of schemes.  Suppose that $T\in
{D}^{p,q}((\mathfrak{X}\times_\mathfrak{Y} U)(\mathbb{C}))$; then if  $\phi\in A^{n-p,n-q}_c(U)$, where $n$ is the dimension of $\mathfrak{Y}$, is a  compactly supported form, we have
that $\pi_U^*(\phi)$ is a compactly supported form on $(\mathfrak{X}\times_\mathfrak{Y} U)(\mathbb{C})$, and
hence we can define $\pi_*(T)$ to be the current in ${D}^{p-d,q-d}(U(\mathbb{C}))$ such that  $\pi_*(T)(\phi):= T(\pi_U^*(\phi))$.

\begin{lemma}\label{pull-back-push-for-forms} Let
$f:\mathfrak{X}\to\mathfrak{Y}$ be a proper, representable, morphism  between   smooth
Deligne-Mumford stacks over the complex numbers $\mathbb{C}$.  Suppose that $f$ is generically finite of degree $d$.
Then the composition of the maps
$$
\mathcal{A}^{p,q}_{\mathfrak{Y}}
\overset{f^*}{\to}
f_*\mathcal{A}^{p,q}_{\mathfrak{X}}
\to
f_*\mathcal{D}^{p,q}_{\mathfrak{X}}
\overset{f_*}{\to}
\mathcal{D}^{p,q}_{\mathfrak{Y}}
$$
is  $d$ times the natural inclusion $\mathcal{A}^{p,q}_{\mathfrak{Y}}\to
\mathcal{D}^{p,q}_{\mathfrak{Y}}$.
\end{lemma}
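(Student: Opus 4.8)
The plan is to reduce the statement to a purely local computation on the base $\mathfrak{Y}$, using the fact that the maps $f^*$, $f_*$, and the inclusion $\mathcal{A}^{p,q}_{\mathfrak{Y}}\to\mathcal{D}^{p,q}_{\mathfrak{Y}}$ are all morphisms of sheaves on the \'etale site of $\mathfrak{Y}$. Since $f$ is representable, for any \'etale $p:U\to\mathfrak{Y}$ from a scheme the fiber product $\mathfrak{X}\times_{\mathfrak{Y}}U$ is a scheme $X_U$, and the composite in question is computed on $U(\mathbb{C})$ from the proper generically-finite morphism $\pi_U:X_U\to U$ of schemes. So it suffices to prove the analogous identity for a proper generically finite morphism $\pi:X\to Y$ of degree $d$ between smooth complex manifolds (or, a little more generally, over the regular scheme $U$ we may pass to the associated analytic space): the composite $\mathcal{A}^{p,q}_Y\xrightarrow{\pi^*}\pi_*\mathcal{A}^{p,q}_X\hookrightarrow\pi_*\mathcal{D}^{p,q}_X\xrightarrow{\pi_*}\mathcal{D}^{p,q}_Y$ equals $d$ times the inclusion.

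First I would check this identity stalk-wise, or rather on small open sets. Since both source and target consist of genuine $C^\infty$ forms (the composite lands in $\mathcal{A}^{p,q}_Y\subset\mathcal{D}^{p,q}_Y$ because $\pi_*\pi^*\alpha$ is smooth — indeed over the open locus where $\pi$ is finite \'etale it is literally a sum of local pullbacks), the statement is an equality of smooth forms, hence can be checked on the dense open set where $\pi$ is \'etale of degree $d$. Over that locus $\pi$ is, analytically-locally on $Y$, a disjoint union of $d$ copies of an open set mapping isomorphically, and on each sheet $\pi_*\pi^*$ acts as the identity on forms; summing the $d$ sheets gives exactly multiplication by $d$. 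Since the composite and $d\cdot(\text{inclusion})$ are two sections of the sheaf $\mathcal{A}^{p,q}_Y\to\mathcal{D}^{p,q}_Y$ (no distributional singularities appear) agreeing on a dense open subset, and $\mathcal{A}^{p,q}_Y$ is a sheaf on a manifold with no sections supported on nowhere-dense sets, they coincide.

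The one point that needs care — and which I expect to be the main obstacle — is verifying that $\pi_*\pi^*\alpha$ is genuinely a smooth form across the branch/ramification locus of $\pi$, so that the comparison on a dense open set actually forces global equality rather than merely equality of currents up to something supported on the ramification locus. The resolution is that $\pi_*\pi^*\alpha$, computed as a current, is by definition the current $\phi\mapsto\int_X\pi^*\alpha\wedge\pi^*\phi=\int_X\pi^*(\alpha\wedge\phi)$, which equals $d\int_Y\alpha\wedge\phi$ by the degree formula for proper generically finite maps (valid since $\pi^*(\alpha\wedge\phi)$ integrates over $X$ to $d$ times the integral over $Y$, the ramification locus having measure zero). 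Thus the current $\pi_*\pi^*\alpha$ is exactly $d$ times the current represented by the smooth form $\alpha$, with no correction term — which is precisely the assertion. In writing this up I would isolate this degree computation as the core step and note that it is the standard fact that proper pushforward of a pullback multiplies by the generic degree, transported to the setting of currents on analytic spaces and then sheafified over the \'etale site of $\mathfrak{Y}$.
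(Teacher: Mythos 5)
Your argument is correct and follows essentially the same route as the paper: reduce via the \'etale site and representability to a proper generically finite morphism of smooth complex varieties, then observe that the ramification locus has measure zero so that the pushforward of the pullback of a smooth form is $d$ times that form. Your direct evaluation of the current against test forms via the degree formula is just a slightly more explicit phrasing of the paper's remark that $f_{U,*}f_U^*(\alpha)$ is a locally $L^1$ form determined off a set of measure zero.
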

\begin{proof}
Suppose that $p:U\to \mathfrak{Y}$ is \'etale, with $U$ a scheme, and that
$\alpha\in A^{p,q}({U}(\mathbb{C}))$. Without loss of generality,
we can suppose that $U$ is
irreducible, so that $f_U:f^{-1}(U)\to U$ has a well defined degree. Since $f_U^*(\alpha)$ is a smooth form $f_{U,*}f_U^*(\alpha)$ is a current which is represented by a
locally $L^1$ form, and hence is determined by its value on the
complement of any subset of $U(\mathbb{C})$ with measure zero. But
outside a set of measure zero on $U(\mathbb{C})$, the map of complex
manifolds $f_U:f^{-1}(U)(\mathbb{C})\to U(\mathbb{C})$ is a finite
\'{e}tale cover, and hence $f_{U,*}f_U^*(\alpha)=d\alpha$, where $d$
is the degree of $\pi$.
\end{proof}
Since the inclusions $\mathcal{A}^{p,*}_{\mathfrak{X}}\subset\mathcal{D}^{p,*}_{\mathfrak{X}}$ induce quasi-isomorphisms on the sheaves of Dolbeault complexes, we get:
\begin{cor}\label{Coh-splits} Let
$f:\mathfrak{X}\to\mathfrak{Y}$ be a proper, representable, generically finite morphism  between   smooth Deligne-Mumford stacks
over the complex numbers $\mathbb{C}$.  Then we have split injections:
$$f^*:H^*(\mathfrak{Y},\mathbb{C})\to H^*(\mathfrak{X},\mathbb{C}) $$
and
$$f^*:H^*(\mathfrak{Y},\Omega_\mathfrak{Y}^*)\to H^*(\mathfrak{X},\Omega_\mathfrak{X}^*) $$
compatible with the Hodge spectral sequence.
\end{cor}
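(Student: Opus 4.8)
The plan is to obtain both splittings at once from Lemma~\ref{pull-back-push-for-forms} by passing to hypercohomology, and then to read off the compatibility with the Hodge spectral sequence from the bidegrees. First I would record that the fibrewise integration map $f_*\colon f_*\mathcal{D}^{p,q}_{\mathfrak{X}}\to\mathcal{D}^{p,q}_{\mathfrak{Y}}$ described just before the lemma is bidegree-preserving (the relative dimension is zero since $f$ is generically finite) and commutes with $\partial$ and $\overline{\partial}$: this is immediate from the defining formula $(\pi_*T)(\phi)=T(\pi_U^*\phi)$ together with the compatibility of $\pi_U^*$ with $\partial$ and $\overline{\partial}$ on compactly supported test forms. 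Hence $f_*$ is a morphism of double complexes of sheaves on $\mathfrak{Y}$, and so are $f^*\colon\mathcal{A}^{*,*}_{\mathfrak{Y}}\to f_*\mathcal{A}^{*,*}_{\mathfrak{X}}$ and the inclusions; Lemma~\ref{pull-back-push-for-forms} then says exactly that the composite morphism of double complexes of sheaves
$$\mathcal{A}^{*,*}_{\mathfrak{Y}}\overset{f^*}{\to} f_*\mathcal{A}^{*,*}_{\mathfrak{X}}\to f_*\mathcal{D}^{*,*}_{\mathfrak{X}}\overset{f_*}{\to}\mathcal{D}^{*,*}_{\mathfrak{Y}}$$
equals $d$ times the inclusion $\mathcal{A}^{*,*}_{\mathfrak{Y}}\hookrightarrow\mathcal{D}^{*,*}_{\mathfrak{Y}}$, where $d$ is the generic degree of $f$.

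Next I would apply $\mathbb{H}^*(\mathfrak{Y},-)$. The sheaves $\mathcal{A}^{p,q}_{\mathfrak{X}}$ and $\mathcal{D}^{p,q}_{\mathfrak{X}}$ are modules over the sheaf of smooth functions, hence soft, hence $f_*$-acyclic because $f$ is proper and representable; therefore the naive pushforwards $f_*\mathcal{A}^{*,*}_{\mathfrak{X}}$ and $f_*\mathcal{D}^{*,*}_{\mathfrak{X}}$ represent $Rf_*\mathbb{C}_{\mathfrak{X}}$ (respectively $Rf_*\Omega^p_{\mathfrak{X}}$, working with the subcomplex in a fixed holomorphic degree $p$ with differential $\overline{\partial}$), so that $\mathbb{H}^*(\mathfrak{Y}, f_*\mathcal{A}^{*,*}_{\mathfrak{X}})\cong\mathbb{H}^*(\mathfrak{Y}, f_*\mathcal{D}^{*,*}_{\mathfrak{X}})\cong H^*(\mathfrak{X},\mathbb{C})$, and similarly in the Dolbeault case. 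Since the inclusion $\mathcal{A}^{*,*}_{\mathfrak{Y}}\hookrightarrow\mathcal{D}^{*,*}_{\mathfrak{Y}}$ is a quasi-isomorphism, taking $\mathbb{H}^*(\mathfrak{Y},-)$ of the displayed composite shows that
$$H^*(\mathfrak{Y},\mathbb{C})\overset{f^*}{\to}H^*(\mathfrak{X},\mathbb{C})\overset{f_*}{\to}H^*(\mathfrak{Y},\mathbb{C})$$
is multiplication by $d$, and likewise for $H^q(\mathfrak{Y},\Omega^p)$. As $d\ge 1$ and all these groups are $\mathbb{C}$-vector spaces, $\tfrac1d f_*$ is a retraction of $f^*$, so $f^*$ is split injective in both cases. (If $\mathfrak{X}$ and $\mathfrak{Y}$ are moreover separated, one may bypass hypercohomology and run the same argument on global sections using Proposition~\ref{acyclic} and its corollary.)

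For the compatibility with the Hodge spectral sequence I would observe that $f^*$ preserves the Hodge filtration $F^\bullet$ of the total complex, since it sends $(p,q)$-forms to $(p,q)$-forms, and that $f_*$ preserves it as well because it preserves bidegree. Thus $f^*$ and $f_*$ are morphisms of filtered complexes, hence induce morphisms of the associated Hodge spectral sequences, and the identity $f_*\circ f^*=d\cdot\mathrm{id}$ is inherited on every page --- in particular on $E_1^{p,q}(\mathfrak{Y})=H^q(\mathfrak{Y},\Omega^p)$ --- which yields the second split injection and its compatibility with the spectral sequence. I expect the only points needing care to be the two sheaf-theoretic inputs invoked above: that the \'etale-local fibrewise integration of currents glues to a well-defined map on the stack and commutes with $\partial,\overline{\partial}$, and that softness yields $f_*$-acyclicity in the Deligne-Mumford setting. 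Both reduce immediately to the corresponding statements for schemes by passing to an \'etale atlas, so neither is a genuine obstacle; the rest is a formal consequence of Lemma~\ref{pull-back-push-for-forms}.
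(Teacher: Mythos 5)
Your proposal is correct and follows essentially the same route as the paper: the paper deduces the corollary directly from Lemma~\ref{pull-back-push-for-forms} together with the observation that the inclusions $\mathcal{A}^{p,*}\subset\mathcal{D}^{p,*}$ are quasi-isomorphisms, so that $f_*\circ f^*$ acts as multiplication by $d$ on (hyper)cohomology and $\tfrac1d f_*$ splits $f^*$. You have merely made explicit the supporting points the paper leaves implicit (softness and $f_*$-acyclicity of the sheaves of forms and currents, compatibility of $f_*$ with $\partial,\overline{\partial}$ and with the Hodge filtration), and these are all handled correctly.
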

\begin{prop} Let $\mathfrak{X}$ be a Deligne-Mumford stack smooth and proper over the complex numbers $\mathbb{C}$.
The Hodge spectral sequence for $\mathfrak{X}(\mathbb{C})$ degenerates at $E_1$, and the cohomology groups $H^*(\mathfrak{X},\mathbb{C})$ admit a Hodge decomposition ``in the strong sense'' (see \cite{Peters_Steenbrink} Definition 2.24).
\end{prop}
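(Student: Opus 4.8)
The plan is to reduce the proposition to the known case of smooth proper schemes (or the complex-analytic Kähler case) by exhibiting $\mathfrak{X}$ as the target of a proper, representable, generically finite morphism from a smooth projective variety, and then transporting the two conclusions along the split injections of Corollary~\ref{Coh-splits}. First I would invoke Chow's lemma / the existence of a projective coarse space together with a resolution: since $\mathfrak{X}$ is a smooth separated Deligne-Mumford stack, it admits a finite étale cover $[U/G]\to \mathfrak{X}$ locally, and globally one can find a scheme $P$ with a proper surjective morphism $P\to \mathfrak{X}$ (for instance by taking an étale atlas and compactifying, or by appealing to the results on coarse spaces cited in the proof of Proposition~\ref{acyclic}); after further blowing up and using resolution of singularities in characteristic zero, we may assume $P$ is a smooth projective variety over $\mathbb{C}$ and that $f:P\to\mathfrak{X}$ is representable and generically finite of some degree $d>0$. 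The key structural input is then Corollary~\ref{Coh-splits}: $f^*\colon H^*(\mathfrak{X},\mathbb{C})\hookrightarrow H^*(P,\mathbb{C})$ and $f^*\colon H^*(\mathfrak{X},\Omega^*_{\mathfrak{X}})\hookrightarrow H^*(P,\Omega^*_P)$ are split injections compatible with the Hodge spectral sequences, the splitting being $\tfrac1d f_*$ by Lemma~\ref{pull-back-push-for-forms}.

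Granting this, degeneration at $E_1$ is immediate: the Hodge spectral sequence $E_1^{p,q}(\mathfrak{X})$ is a direct summand, as a spectral sequence, of $E_1^{p,q}(P)$ via $f^*$, and the latter degenerates at $E_1$ because $P$ is smooth proper over $\mathbb{C}$ (classical Hodge theory). A sub-spectral-sequence of a degenerate one is degenerate, so all differentials $d_r(\mathfrak{X})$ for $r\ge 1$ vanish. Equivalently one compares dimensions: $\sum_{p+q=n}\dim H^q(\mathfrak{X},\Omega^p)\ge \dim H^n(\mathfrak{X},\mathbb{C})$ always, and the reverse inequality follows because each term injects compatibly into the corresponding term for $P$, where equality holds, and the injections are compatible with the abutment filtration. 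For the Hodge decomposition "in the strong sense" one argues that the Hodge filtration $F^\bullet$ on $H^n(\mathfrak{X},\mathbb{C})$ and its complex conjugate $\overline{F}^\bullet$ (defined via the real structure $H^n(\mathfrak{X},\mathbb{R})$, which exists since the constant sheaf $\mathbb{R}$ resolves by real forms) are $n$-opposed: this can be checked after applying the injection $f^*$, since $f^*$ is defined over $\mathbb{R}$, commutes with complex conjugation, and is strictly compatible with $F^\bullet$ (being split by $\tfrac1d f_*$, which is also strict), and the corresponding statement on $H^n(P,\mathbb{C})$ is the classical strong Hodge decomposition. Hence $H^n(\mathfrak{X},\mathbb{C})=\bigoplus_{p+q=n}F^p\cap\overline{F}^q$ with $F^p\cap\overline{F}^q=H^{p,q}(\mathfrak{X})\cong H^q(\mathfrak{X},\Omega^p)$, and the $\partial\overline\partial$-lemma follows formally from the existence of such a decomposition compatible with the bicomplex $(\mathcal{A}^{*,*}_{\mathfrak{X}},\partial,\overline{\partial})$, whose cohomology computes everything by the Corollary after Proposition~\ref{acyclic}.

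The main obstacle I anticipate is the very first step: producing a \emph{representable} proper generically finite map $P\to\mathfrak{X}$ from a smooth projective variety. Representability forces $P\to\mathfrak{X}$ to have trivial automorphisms, which is why one cannot simply use an étale atlas of the stack (whose diagonal is étale but not an isomorphism); one typically takes a sufficiently ramified cover — e.g. a relative projective frame bundle, or the normalization of $\mathfrak{X}$ in a suitable field extension, or a construction of Vistoli/Kresch producing a generically-representable finite cover — and then resolves singularities. One must check this cover is generically finite (equivalently, dominant and representable on a dense open), so that Corollary~\ref{Coh-splits} applies with $d>0$. Once this geometric input is in place, everything else is a formal consequence of functoriality of the Hodge spectral sequence and the splitting provided by $\tfrac1d f_*$; no analysis beyond what is already established in Section~\ref{Dolbeault} is needed.
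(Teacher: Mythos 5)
Your proposal is correct and follows essentially the same route as the paper: the paper invokes Chow's lemma for stacks (\cite{Laumon-M-B-stacks} 16.6.1) to obtain a proper surjective generically finite $\pi:X\to\mathfrak{X}$ with $X$ smooth and proper, and then deduces degeneration from Corollary \ref{Coh-splits} and the strong Hodge decomposition from the fact that, the sheaves $\mathcal{A}^{*,*}_{\mathfrak{X}}$ being acyclic by Proposition \ref{acyclic}, the pull-back is induced by a map of global Dolbeault complexes commuting with complex conjugation. The ``main obstacle'' you anticipate is not one: any morphism from a scheme to a Deligne--Mumford stack is automatically representable (the diagonal of $\mathfrak{X}$ is representable), so Chow's lemma already supplies the required cover without any auxiliary frame-bundle or normalization construction.
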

\begin{proof} Since $\mathfrak{X}$ is proper over $\mathbb{C}$ it is in particular separated.  Hence by Chow's lemma \cite{Laumon-M-B-stacks} 16.6.1, we know that there exists a proper surjective and generically finite map
$\pi:X\to \mathfrak{X}$ with $X$ smooth and proper over $\mathbb{C}$.   Since the Hodge spectral sequence for ${X}(\mathbb{C})$ degenerates, by corollary \ref{Coh-splits} we know that the same is true for $\mathfrak{X}(\mathbb{C})$.  Furthermore since the $\mathcal{A}^{*,*}_\mathfrak{X}$ are acyclic by proposition \ref{acyclic}, the pull back map  is induced by the map of complexes:
$f^*:A^{*,*}(\mathfrak{X}(\mathbb{C}))\to A^{*,*}(X(\mathbb{C}))$, and is invariant under complex conjugation. Hence for $p+q=n$, $F^pH^n(\mathfrak{X}(\mathbb{C}),\mathbb{C})
\cap\overline{ F^{q+1}H^n(\mathfrak{X}(\mathbb{C}),\mathbb{C})}=\{0\}$.
\end{proof}
The proposition and the fact that the Hodge cohomology of $\mathfrak{X}(\mathbb{C})$ is computed by the
Dolbeault complexes $A^{*,*}(\mathfrak{X}(\mathbb{C}))$ give us:
\begin{cor}
The $\partial\overline{\partial}$-lemma holds for
$\mathfrak{X}(\mathbb{C})$.
\end{cor}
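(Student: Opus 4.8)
The plan is to deduce the corollary from the preceding proposition by the classical formal argument, the ``principle of two types'': for any bounded double complex of $\mathbb{C}$-vector spaces equipped with a conjugation, the $\partial\overline{\partial}$-lemma holds as soon as the associated Hodge-to-de Rham spectral sequence degenerates at $E_1$ and the induced filtration on total cohomology, together with its conjugate, is $n$-opposite in each total degree $n$, i.e. gives a Hodge decomposition ``in the strong sense'' (see \cite[\S2]{Peters_Steenbrink}, and originally the argument of Deligne--Griffiths--Morgan--Sullivan). I would apply this to the double complex $K^{\bullet,\bullet}:=A^{\bullet,\bullet}(\mathfrak{X}(\mathbb{C}))$ of global $C^\infty$ forms on the stack, with differentials $\partial,\overline{\partial}$ and with the conjugation given by complex conjugation of forms, which interchanges $\partial$ with $\overline{\partial}$ and the column filtration with the row filtration.

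The point is that all the required inputs have already been assembled. By the corollary to Proposition \ref{acyclic} the columns $(K^{p,\bullet},\overline{\partial})$ compute $H^q(\mathfrak{X},\Omega^p)$ and the total complex computes $H^{p+q}(\mathfrak{X},\mathbb{C})$, so the first spectral sequence of $K^{\bullet,\bullet}$ is precisely the Hodge spectral sequence of $\mathfrak{X}(\mathbb{C})$ and its abutment filtration is the Hodge filtration $F^\bullet$. The previous proposition then supplies exactly the two hypotheses of the abstract lemma: $E_1$-degeneration of this spectral sequence, and the strong-sense Hodge decomposition of $H^n(\mathfrak{X}(\mathbb{C}),\mathbb{C})$, which by \cite[Def.~2.24]{Peters_Steenbrink} is the statement that $F^\bullet$ and its complex conjugate $\overline{F}^\bullet$ are $n$-opposite; applying conjugation to the first spectral sequence moreover gives degeneration of the conjugate spectral sequence as well. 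Feeding this into the principle of two types for $K^{\bullet,\bullet}$ yields exactly the assertion that any $\alpha\in A^{p,q}(\mathfrak{X}(\mathbb{C}))$ which is $d$-closed and lies in the image of $\partial$, of $\overline{\partial}$, or of $d$ is of the form $\partial\overline{\partial}\gamma$ --- that is, the $\partial\overline{\partial}$-lemma for $\mathfrak{X}(\mathbb{C})$.

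I do not anticipate a real obstacle here, since the geometric work --- acyclicity of $\mathcal{A}^{\bullet,\bullet}_{\mathfrak{X}}$, the construction via Chow's lemma of a generically finite proper surjection from a smooth projective scheme, and the resulting degeneration and strong Hodge decomposition --- has all been carried out above, and what is left is bookkeeping. The one small point to be careful about is to cite the formal lemma in the form valid for bounded double complexes of arbitrary (possibly infinite-dimensional) vector spaces, rather than only for the finite-dimensional cohomology of a compact manifold; this is harmless, as the proof uses only the exactness behaviour of $\partial$ and $\overline{\partial}$ with respect to the two filtrations and no finiteness, and in any case one may first pass to a quasi-isomorphic finite-dimensional model of $K^{\bullet,\bullet}$ and then quote the statement in its most standard form.
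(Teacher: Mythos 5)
Your argument is exactly the paper's: the author deduces the corollary from the preceding proposition (degeneration plus strong Hodge decomposition) and the fact that the global Dolbeault complex $A^{*,*}(\mathfrak{X}(\mathbb{C}))$ computes Hodge cohomology, citing \cite{Peters_Steenbrink} 2.27--2.28 for precisely the formal ``principle of two types'' you invoke. Your write-up just spells out the bookkeeping that the paper leaves to the reference.
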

\begin{proof}
See \cite{Peters_Steenbrink} 2.27 and 2.28.
\end{proof}
This proof is a variation on an argument for complex manifolds, by which one deduces a strong Hodge decomposition and the
$\partial\overline{\partial}$-lemma for Moishezon manifolds (see, for example, section 9 of Demailly's article in \cite{Bertin_Demailly_Illusie_Peters}).

We shall need the following lemma later.
\begin{lemma}\label{inverse-limit-of-forms}
If $\X$ is a smooth separated Deligne-Mumford stack over $\C$, the natural map:
$$A^{*,*}(\X)\to \lim_{\overset{\longleftarrow}{p:X\to\mathfrak{X}} }A^{*,*}(X)$$
where the inverse limit is over all proper surjective maps with domain a smooth variety,
is an isomorphism.
\end{lemma}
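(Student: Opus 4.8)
The plan is to prove the map is both injective and surjective by reducing to local statements in the étale topology of $\mathfrak{X}$ and using the fact (Proposition~\ref{acyclic} and its corollary) that the global Dolbeault/de Rham complexes compute cohomology, together with Chow's lemma for Deligne-Mumford stacks. First I would observe that the system of proper surjective maps $p\colon X\to\mathfrak{X}$ with $X$ a smooth variety is nonempty and filtered: given $p\colon X\to\mathfrak{X}$ and $p'\colon X'\to\mathfrak{X}$, the fiber product $X\times_\mathfrak{X} X'$ is an algebraic space proper and surjective over $\mathfrak{X}$, and applying Chow's lemma (\cite{Laumon-M-B-stacks} 16.6.1) and resolution of singularities in characteristic zero produces a smooth variety dominating it; this makes the inverse limit well-behaved.

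For injectivity, suppose $\alpha\in A^{p,q}(\mathfrak{X})$ pulls back to zero on every smooth variety $X$ over $\mathfrak{X}$. Since $\mathfrak{X}$ is Deligne-Mumford, it admits an étale cover $\pi\colon U\to\mathfrak{X}$ by a scheme; after shrinking and using resolution of singularities we may take $U$ smooth, but $\pi$ is étale, hence not proper. To fix this I would instead argue locally as in the proof of Proposition~\ref{acyclic}: using the coarse space $X$ and the local presentations $[U_i/G_i]\cong\pi^{-1}(V_i)$ with $U_i$ a smooth variety and $G_i$ finite, the composite $U_i\to[U_i/G_i]\to\mathfrak{X}$ can be completed, via Chow's lemma applied to the coarse space, to a proper surjective map from a smooth variety through which $\alpha$'s restriction factors; so $\alpha|_{[U_i/G_i]}$ is detected by $G_i$-invariant forms on $U_i$, and vanishing of the pullback to a resolution forces $\alpha|_{V_i}=0$. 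Covering $\mathfrak{X}(\mathbb{C})$ by such $V_i$ gives $\alpha=0$.

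For surjectivity, a compatible family $(\alpha_X)_X$ is given, and I want to produce a form on $\mathfrak{X}$ restricting to each $\alpha_X$. Pick a single proper surjective $\pi\colon X_0\to\mathfrak{X}$ with $X_0$ smooth (Chow's lemma plus resolution); by Lemma~\ref{pull-back-push-for-forms}, the operator $\tfrac{1}{d}\pi_*$ applied to $\alpha_{X_0}$ gives a distributional $(p,q)$-current on $\mathfrak{X}$, and I must show it is (a) smooth and (b) independent of choices, hence descends to the required form. Independence follows from the filtered nature of the system and the projection formula: given another $\pi'\colon X_1\to\mathfrak{X}$ dominating $X_0$ via $h\colon X_1\to X_0$ with $\deg h = e$, compatibility gives $\alpha_{X_1}=h^*\alpha_{X_0}$, and $\tfrac{1}{de}(\pi')_*\alpha_{X_1}=\tfrac{1}{de}\pi_* h_* h^*\alpha_{X_0}=\tfrac{1}{d}\pi_*\alpha_{X_0}$ by Lemma~\ref{pull-back-push-for-forms} applied to $h$. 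Smoothness is the local issue: working over a local presentation $\pi^{-1}(V)\cong[U/G]$, the pullback $X_0\times_\mathfrak{X}[U/G]\to[U/G]\to U$ realizes $\tfrac1d\pi_*\alpha_{X_0}$ as the $G$-average of a smooth form on $U$ (namely the image of $\alpha$ under pullback to a smooth variety covering $X_0\times_\mathfrak{X}U$ followed by the trace), which is smooth and $G$-invariant, hence a genuine form on $[U/G]$. Gluing these local forms, which agree by the independence statement, yields an element of $A^{p,q}(\mathfrak{X})$ mapping to the given family.

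The main obstacle is the surjectivity argument's smoothness step: one must verify that the \emph{a priori} distributional current $\tfrac1d\pi_*\alpha_{X_0}$ is actually represented by a $C^\infty$ form, which is not formal because $\pi_*$ of a smooth form under a proper map is generally only $L^1$. The resolution is exactly the device from Proposition~\ref{acyclic}: over each local chart $[U/G]$ the relevant pushforward is a finite group average of a pullback of a smooth form along an étale (not merely generically étale) map, so no singularities are introduced. Making this local-to-global compatibility precise — in particular checking that the locally constructed smooth forms agree on overlaps, which reduces to the independence-of-$\pi$ statement proved via the projection formula — is the technical heart of the argument.
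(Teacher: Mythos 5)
Your injectivity argument works but is much heavier than necessary: by Lemma \ref{pull-back-push-for-forms}, for any proper surjective generically finite $f:X\to\X$ with $X$ smooth (such maps exist by Chow's lemma plus resolution), $f_*f^*\alpha=d\alpha$ as currents, and since $\mathcal{A}^{p,q}_{\X}\to\mathcal{D}^{p,q}_{\X}$ is injective, $f^*\alpha=0$ already forces $\alpha=0$; no local analysis on the coarse space is needed.

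The surjectivity argument has a genuine gap at exactly the step you flag as the technical heart. You define the candidate as the current $\tfrac1d\pi_*\alpha_{X_0}$ and claim that over a chart $[U/G]\cong\pi^{-1}(V)$ it is realized as a $G$-average of ``a pullback of a smooth form along an \'etale map.'' But the relevant map is not \'etale: $X_0\times_\X U\to U$ (or a resolution $W\to U$ of it) is proper and generically finite with, in general, positive-dimensional fibers, and the pushforward of a smooth form along such a map is only an $L^1$ current --- typically not even continuous where the fiber dimension jumps (push a generic function forward along the blow-up of a point to see this). Lemma \ref{pull-back-push-for-forms} gives smoothness of $f_*\beta$ only when $\beta$ is itself a pullback from the target, which is precisely what is to be proved; and no finite group average repairs this, since the group in Proposition \ref{acyclic} acts through finite \emph{\'etale} maps, which is not the situation here. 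The missing ingredient is the one the paper uses: by Zariski's main theorem an \'etale chart $U\to\X$ extends to a finite map $V\to\X$ and hence, after resolution, to a proper surjective $\tilde\pi:\tilde V\to\X$ with $\tilde V$ smooth containing $U$ as a dense open subset. The compatible family then already hands you a smooth form $\alpha_{\tilde\pi}|_U$ on each chart; one checks independence of the compactification by dominating two choices with a resolution of their fiber product, obtains a section of $\mathcal{A}^{p,q}_{\X}$ by \'etale descent, and finally verifies $\alpha_p=p^*\alpha$ for every $p:X\to\X$ by dominating $X$ and $\tilde V$ by a common smooth $Q$ and using that $g^*$ is injective for $g$ a surjective map of smooth varieties. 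Your construction can be salvaged by exactly this move --- smoothness of $\tfrac1d\pi_*\alpha_{X_0}$ near a chart follows from the identity $\alpha_W=(\mathrm{pr}_{\tilde V})^*\alpha_{\tilde\pi}$ on a common resolution $W$ --- but as written the smoothness claim is unsupported.
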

\begin{proof}
From lemma \ref{pull-back-push-for-forms} we know the map is injective. We therefore need only show that it is surjective.
Suppose then that $(p:X\to \mathfrak{X})\mapsto \alpha_p$ is an element in the inverse limit.

First note that  by Zariski's main theorem (\cite{Laumon-M-B-stacks}, 16.5) any \'etale map  $\pi:U\to \X$
from a smooth variety to $\X$ extends
to a finite map $V\to \X$, and hence by resolution of singularities to a proper surjective map $\tilde{\pi}:\tilde{V}\to\X$.
Thus, restricting $\alpha_{\tilde{\pi}}$, we get a form on $U$. A priori this depends depends on the choice of the
factorization of $\pi$ through $\tilde{\pi}$. However given two such factorizations,
$U\subset\tilde{V}_1\to\X$ and $U\subset\tilde{V}_2\to\X$ we can factor through the fiber product
$\tilde{V}_1\times_\X\tilde{V}_2$ and applying resolution of singularities again we see that the form is indeed independent of the factorization.
Writing $\alpha_\pi$ for this form, by a slight variation of the the preceding argument, we also see that
$\alpha_\pi$ is contravariant with respect to $\pi$.
Hence  we get an element in the inverse limit
of $A^{*,*}(U)$ over all \'etale maps $U\to \mathfrak{X}$, \i.e. an element  $\alpha\in A^{*,*}(\mathfrak{X})$.

Now we want to show that for any $p:X\to \mathfrak{X}$, $\alpha_p=p^*(\alpha)$. Let $\pi:P\to \mathfrak{X}$ be \'etale and surjective with $P$ a smooth scheme over $\mathbb{C}$.  Let $i:P\to \bar{P}, \bar{\pi}:\bar{P}\to\mathfrak{X}$ be a factorization
of $\pi$ through a finite map with $P$ dense in $\bar{P}$.
Since $P$ is dense in $\bar{P}$ $\alpha_{\bar{\pi}}=\bar{\pi}^*\alpha$, with $\bar{P}$ smooth over $\mathbb C$.
Now by resolution of singularities, we know that there exists a proper
morphism $Q\to \bar{P}\times_\mathfrak{X} X$, with $Q$ smooth over $\mathbb{C}$ such that
the induced maps $f:Q\to \bar{P}$, $g:Q\to X$ are surjective.  Now $\alpha_Q=(f\cdot\bar{\pi})^*(\alpha)$.
It follows that $g^*(\alpha_p)=f^*(\alpha_{\bar{\pi}})=(g\cdot p)^*\alpha=g^*(p^*(\alpha))$.
However, since $g$ is a surjective map between smooth varieties over $\mathbb{C}$, $g^*$ is injective, and hence $\alpha_p=p^*(\alpha)$.
\end{proof}

Given a codimension $p$ cycle $\zeta=\sum_i [Z_i]$ on a stack
$\mathfrak X$, we know, using the local to global spectral sequence for
cohomology (in the \'etale topology) with supports in $|\zeta|$,  that
the associated current $\delta_\zeta$ represents the cycle class
$[\zeta]\in H^{2p}(\mathfrak{X},\mathbb{R}(p))$.  Hence
as in \cite{Gillet-Soule-IHES-arith-intn-thy}, if we choose an arbitrary $C^\infty$
$(p,p)$-form $\omega$ representing this class, then
by the $\partial\overline{\partial}$-lemma, we know that there is a current
$g\in D^{(p-1,p-1)}(\mathfrak{X}(\mathbb{C}))$ such that
$$dd^c(g)+\delta_\zeta=\omega.$$

We can compute the real Deligne Cohomology of a Deligne-Mumford stack which is proper and smooth over $\mathbb{C}$
using the Deligne complexes of Burgos associated to the Dolbeault complex of $\mathfrak{X}(\C)$ (see \cite{Burgos-Duke}). However,
we do not know how to extend the results of Burgos to non-proper stacks, and therefore we do not know how to give a
cohomological construction of Green currents analogous to that of \textit{op. cit}.

\section{Chow groups and the $K$-theory of stacks.}\label{G-K-stacks}
\subsection{Chow groups of Deligne-Mumford Stacks}
Recall the definition of cycles on, and Chow groups of, stacks from \cite{Gillet-Intersection-theory-on-stacks}.
\begin{defn}\mbox{}\\
\noindent{\textbf 1.} If $\mathfrak{X}$ is a stack, the group of
codimension $p$ cycles $Z^p(\mathfrak{X})$ is the free abelian group
on the set of codimension $p$ reduced irreducible substacks of
$\mathfrak{X}$. This is isomorphic to the group of codimension $p$
cycles on the coarse space $|\mathfrak{X}|$.

\noindent{\textbf 2.} If $\mathfrak{W}$ is a integral stack, write
$\mathbf{k}(\mathfrak{W})$ for its function field.  This may be
defined as \'etale $H^0$ of the sheaf of total quotient rings on
$\mathfrak{W}$; and if $f\in \mathbf{k}(\mathfrak{W})$, we may
define $\mathrm{div}(f)$ locally in the \'etale topology. Therefore,
if $\mathfrak{W}\subset \mathfrak{X}$ is a codimension $p$ integral
substack, and $f\in \mathbf{k}(\mathfrak{W}) $, we have
$\mathrm{div}(f)\in Z^{p+1}(\mathfrak{X})$.

\noindent{\textbf 3.}  We define $\mathrm{CH}^p(\mathfrak{X})$ to be
the quotient of $Z^p(\mathfrak{X})$ by the subgroup consisting of
divisors of rational functions on integral subschemes of codimension
$p-1$.

\noindent{\textbf 4.}
We define $\mathrm{CH}^{p,p-1}(\mathfrak{X})$ as in section 3.3.5 of \cite{Gillet-Soule-IHES-arith-intn-thy}, to be the homology of the complex
$$\bigoplus_{x\in\X^{(p-2)}}K_2(\mathbf{k}(x))\to\bigoplus_{x\in\X^{(p-1)}}\mathbf{k}(x)^*\to Z^p(\X) $$
at the middle term. These groups are isomorphic to the groups of section 4.7 of \cite{Gillet-Intersection-theory-on-stacks},
though there they are written $\mathrm{CH}^{p-1,p}(\mathfrak{X})$; this follows from the argument of the proof of theorem 6.8 of \emph{op. cit.}
\end{defn}
Note that this definition does not require that the Chow groups have rational coefficients.
However once we pass to the \'etale site of $\mathfrak X$, we will need to tensor with $\mathbb Q$.
\subsection{$G$-theory of stacks}
Let us review the basic facts about the $G$-theory of Deligne-Mumford stacks, following section 4.2 of
\cite{Gillet-Soule-Wt-Cplxs-Arith-Vartys}, where details and proofs of the following may be found.

Let ${\mathbf{G}}$ be the presheaf of spectra  on the \'etale site induced by the functor
$\mathbf{G}$. We shall take this functor with rational coefficients, \emph{i.e.},
we take the usual $\mathbf{G}$-theory functor to the category of symmetric spectra, and take the
smash product with the Eilenberg-Maclane spectrum $H\mathbb{Q}$.
It is a (by now) elementary observation, first made by  Thomason (\emph{c.f.} \cite{AKTEC})
that if $X$ is a scheme,
the natural map on \emph{rational} $G$-theory:
$$\mathbf{G}({X})\to \mathbf{H}_{\text{\'et}}({X},\mathbf{G})$$
is a weak equivalence. Hence we can extend rational $G$-theory from schemes to stacks by defining:
\begin{defn}
If $\mathfrak{X}$ is a stack, we define the $G$-theory spectrum $\mathbf{G}(\mathfrak{X})$ to be
$\bm{H}_{\text{\'et}}(\mathfrak{X},\mathbf{G})$.
\end{defn}

Notice that it follows from the definition, and Thomason's result, that
if $\pi:V.\to \mathfrak{X}$ is an \'{e}tale hypercover of a
Deligne-Mumford  stack, the natural map:
$$\mathbf{G}(\mathfrak{X})\to \mathrm{holim}_i(\mathbf{G}(V_i))$$
is a weak equivalence.

It follows immediately that if $f:\mathfrak{Y}\to\mathfrak{X}$ is a \emph{representable} proper morphism of stacks, then
there is a natural map $f_*:\mathbf{G}(\mathfrak{Y})\to \mathbf{G}(\mathfrak{X})$ (this was already observed in
\cite{Gillet-Intersection-theory-on-stacks}). Similarly, if $f$ is a \emph{flat} representable morphism, there is
a natural pull-back map  $f^*:\mathbf{G}(\mathfrak{X})\to \mathbf{G}(\mathfrak{Y})$.
If $f:\mathfrak{Y}\to\mathfrak{X}$ is a closed substack, with complement $j : {\mathfrak U} \to {\mathfrak X}$,
it is then straightforward to check that Quillen's localization theorem extends to stacks, \ie
$$
{\mathbf G}({\mathfrak Y}) \overset{i_*}{\longrightarrow} {\mathbf G}({\mathfrak X}) \overset{j^*}{\longrightarrow} {\mathbf G}({\mathfrak U})
$$
is a fibration sequence.

In order to define a pushforward for non-representable morphisms, in \cite{Gillet-Soule-Wt-Cplxs-Arith-Vartys},
we replace stacks by simplicial varieties, as follows. First observe, that using the strict covariance of our model for
$G$-theory, we can extend the functor $\mathbf{G}$ from schemes to simplicial schemes with proper face maps.

Let  $p : X. \to \mathfrak{X}$ be a proper morphism to a stack from  a simplicial variety with proper face maps.
We construct, in \emph{op. cit.}, a map $p_*:\mathbf{G}(X.)\to \mathbf{G}(\mathfrak{X})$, which is well defined in the
rational stable homotopy category and is compatible with composition, in the sense that
if $f.:Y.\to X.$ is a map of simplicial varieties, we have that
$$ (p\cdot f)_*=p_*\cdot f_*:\mathbf{G}(Y.)\to  \mathbf{G}(\mathfrak{X})$$
in the rational stable homotopy category.
This construction gives an extension of $\mathbf{G}$-theory
from simplicial varieties to stacks, because we have:
\begin{lemma}
Suppose that $p:X.\to {\mathfrak X}$ is a proper hypercover.
Then $p_*:\mathbf{G}(X.)\to \mathbf{G}(\mathfrak{X})$ is a weak equivalence.
\end{lemma}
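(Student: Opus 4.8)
The plan is to reduce the statement to two facts about hypercovers: first, that $G$-theory satisfies étale cohomological descent with rational coefficients (Thomason's theorem, already invoked above), and second, that an étale hypercover of a Deligne-Mumford stack can be refined by, or compared with, a proper hypercover by simplicial varieties. The point is that $p_*$ on $\mathbf{G}(X.)$ was constructed in \emph{op. cit.} precisely so as to be compatible with the augmentation, so what must be checked is that it is an \emph{equivalence} when $p$ is a hypercover, not merely a well-defined map.

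First I would observe that the statement is local on $\mathfrak{X}$ in the étale topology in the following sense: if $U. \to \mathfrak{X}$ is the Čech-type étale hypercover associated to an étale cover $U \to \mathfrak{X}$ by a scheme, then by the remark preceding the lemma (Thomason plus $\bm{H}_{\text{\'et}}$-descent) the natural map $\mathbf{G}(\mathfrak{X}) \to \mathrm{holim}_i \mathbf{G}(U_i)$ is a weak equivalence, and each $U_i$ is a scheme (in fact an algebraic space, which one further refines). So it suffices to treat the case where $\mathfrak{X} = X$ is itself a scheme, or more precisely to show that the two notions of pushforward — the one from simplicial schemes via $\mathrm{holim}$ and étale descent, and the one constructed by the more hands-on argument in \cite{Gillet-Soule-Wt-Cplxs-Arith-Vartys} — agree up to the weak equivalences already in play. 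Concretely: given a proper hypercover $p: X. \to \mathfrak{X}$, I would form the bisimplicial object $X. \times_{\mathfrak{X}} U.$ where $U. \to \mathfrak{X}$ is an étale hypercover by schemes; each row $X. \times_{\mathfrak{X}} U_i \to U_i$ is then a proper hypercover of a scheme, and each column $X_j \times_{\mathfrak{X}} U. \to X_j$ is an étale hypercover of a (representable) simplicial scheme, to which Thomason's descent and the representable pushforward of the excerpt both apply.

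Then I would run a two-step diagonal argument. Taking $\mathrm{holim}$ over the étale direction first and using étale descent (both for $\mathbf{G}(\mathfrak{X})$ and for $\mathbf{G}(X.)$, the latter levelwise), one reduces the claim for $p: X. \to \mathfrak{X}$ to the claim for the proper hypercovers $X. \times_{\mathfrak{X}} U_i \to U_i$ of schemes. For proper hypercovers of a \emph{scheme}, the result is exactly what was established in \cite{Gillet-Soule-Wt-Cplxs-Arith-Vartys} as the basis of the weight-complex construction (cohomological descent for $G$-theory along proper hypercovers, valid rationally by the theorem of Gillet-Soulé / using the classical proper descent results), so one may cite it. The compatibility of $p_*$ with composition, also recorded above, is what guarantees that the equivalence obtained after taking the diagonal of the bisimplicial object is the same map $p_*$ named in the statement.

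The main obstacle I expect is bookkeeping rather than conceptual: one must be careful that the $p_*$ of \emph{op. cit.} is only defined in the \emph{rational stable homotopy category} and only for simplicial schemes with \emph{proper} face maps, so every fiber product and hypercover built above must be arranged to stay within that class (proper face maps are preserved by base change, which helps), and the two $\mathrm{holim}$'s must be shown to commute with the relevant equivalences — a standard but fussy point about homotopy limits of bisimplicial spectra. A secondary subtlety is that an étale cover of a Deligne-Mumford stack is a priori by an algebraic space, not a scheme, so one inserts one further étale (Zariski, even) refinement to reach schemes before applying the simplicial-scheme machinery; since all the functors in sight satisfy étale descent this changes nothing.
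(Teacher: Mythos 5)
Your proposal does not follow the paper, because the paper offers no argument for this lemma at all: as announced at the start of the subsection, the proof is deferred to section 4.2 of \cite{Gillet-Soule-Wt-Cplxs-Arith-Vartys}. Judged on its own terms, the proposal has a genuine gap, located exactly where you label the input ``classical.'' There are no classical proper-descent results for $G$-theory: descent along proper hypercovers fails integrally already for the \v{C}ech nerve of a nontrivial finite Galois extension $\mathrm{Spec}(L)\to\mathrm{Spec}(k)$, where the realization of the simplicial spectrum computes the homotopy orbits $\mathbf{G}(L)_{h\Gamma}$ and the augmentation hits $G_0(k)$ with index $[L:k]$ (plus spurious higher group homology). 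The rational statement for \emph{schemes} is therefore not background; it is precisely the theorem of \emph{op.\ cit.} that this lemma is quoting, and its proof goes by noetherian induction using the localization fibration sequence together with the rational transfer for finite surjective maps at the generic points --- the same pattern as the proof, later in this section, that $\pi_*:G_0(X)\to G_0(\X)$ is surjective. That induction applies verbatim to a Deligne--Mumford stack, since the paper has already recorded the localization sequence for closed substacks and the fact that a generic point of $\X$ is a finite \'etale groupoid over a field whose rational $G$-theory is a direct summand of that of the field. So the reduction from stacks to schemes does not isolate the difficulty: the stack case is no harder than the scheme case, and both require the same induction.

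Moreover the reduction itself, which is the only content your proposal adds beyond the citation, hinges on an unjustified step: the interchange of the geometric realization (homotopy colimit over the proper, covariant direction) with the totalization (homotopy limit over the \'etale, contravariant direction) of the bisimplicial spectrum $(i,j)\mapsto \mathbf{G}(X_j\times_\X U_i)$. Homotopy colimits do not commute with homotopy limits in general; here one must invoke the uniform strong convergence of the rational \'etale descent spectral sequence (finite cohomological dimension for sheaves of $\Q$-vector spaces on a noetherian Deligne--Mumford stack) to upgrade the canonical comparison map to an equivalence. That can be done, but it is an argument of roughly the same weight as the direct noetherian induction it is meant to replace, so I would recommend dropping the bisimplicial detour and running localization and induction on $\X$ itself, with the finite \'etale gerbes over the residue fields as the base case.
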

From which we then get:
\begin{thm}\label{stacks}
Let $f : \mathfrak{X} \to \mathfrak{Y}$ be a proper, not necessarily representable, morphism of stacks.
Then there exists a canonical map (in the homotopy category)
$$\mathbf{G}(\mathfrak{X})\to \mathbf{G}(\mathfrak{Y})$$ with the property that for any commutative square:
$$
\begin{CD}
X. @>\tilde{f}>> Y.\\
@V{p.}VV   @VV{q.}V\\
\mathfrak{X} @>{f}>> \mathfrak{Y}
\end{CD}
$$
in which $p.$ and $q.$ are proper morphisms with domains simplicial schemes with proper face maps,
we have a commutative square in the stable homotopy category:

$$
\begin{CD}
\mathbf{G}(X.) @>\tilde{f_*}>> \mathbf{G}(Y.)\\
@V{\mathbf{G}(p.)}VV   @VV{\mathbf{G}(q.)}V\\
\mathbf{G}(\mathfrak{X}) @>{\mathbf{G}(f)}>> \mathbf{G}(\mathfrak{Y})
\end{CD}
$$
\end{thm}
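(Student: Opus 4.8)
The plan is to reduce the statement to the case of representable proper morphisms, where pushforward already exists, by choosing compatible proper hypercovers. First I would observe that it suffices to construct the map $\mathbf{G}(f):\mathbf{G}(\mathfrak{X})\to\mathbf{G}(\mathfrak{Y})$ and verify the compatibility square for \emph{one} choice of $p.$ and $q.$, since the general case then follows by comparing any two such squares through a common refinement. So I would begin by choosing a proper hypercover $q.:Y.\to\mathfrak{Y}$ with $Y.$ a simplicial variety having proper face maps (these exist for Deligne--Mumford stacks by the standard construction, iterating an \'etale atlas and the coarse-space/Chow's-lemma argument already invoked earlier in the paper). Then, pulling back along $f$ and refining, I would choose a proper hypercover $p.:X.\to\mathfrak{X}$ together with a simplicial map $\tilde f:X.\to Y.$ lifting $f$; concretely one forms $\mathfrak{X}\times_{\mathfrak{Y}}Y.$ and takes a proper hypercover of that simplicial stack by a simplicial variety with proper face maps.

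Next I would define $\mathbf{G}(f)$ by the composite
$$
\mathbf{G}(\mathfrak{X})\xrightarrow{\;p_*^{-1}\;}\mathbf{G}(X.)\xrightarrow{\;\tilde f_*\;}\mathbf{G}(Y.)\xrightarrow{\;q_*\;}\mathbf{G}(\mathfrak{Y}),
$$
where $p_*$ is a weak equivalence by the Lemma on proper hypercovers (so $p_*^{-1}$ makes sense in the homotopy category), $\tilde f_*$ is the pushforward along the map of simplicial varieties constructed in \cite{Gillet-Soule-Wt-Cplxs-Arith-Vartys}, and $q_*$ is the pushforward to the stack $\mathfrak{Y}$ from the simplicial variety $Y.$, also constructed in \emph{op. cit.} The square for this particular $(p.,q.,\tilde f)$ then commutes essentially by definition. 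The content of the theorem is that $\mathbf{G}(f)$ is \emph{independent} of the choices, and that the square commutes for an \emph{arbitrary} commutative square of the stated form. For independence, given two choices $(X.,\tilde f,p.)$ and $(X'.,\tilde f',p'.)$ lying over hypercovers $(Y.,q.)$ and $(Y'.,q'.)$, I would pass to a common proper hypercover $W.\to\mathfrak{X}$ dominating both $X.$ and $X'.$ and a common $Z.\to\mathfrak{Y}$ dominating $Y.$ and $Y'.$, arranged compatibly with the liftings; then the compatibility of $\tilde f_*$ with composition of simplicial maps, the compatibility of $q_*$ with refinement of hypercovers (both from \emph{op. cit.}), and the fact that all the hypercover pushforwards are weak equivalences force the two composites to agree in the homotopy category. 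For a general square $(X.,Y.,\tilde f,p.,q.)$ with $p.$, $q.$ merely proper (not necessarily hypercovers), I would choose proper hypercovers $X'.\to X.$ and $Y'.\to Y.$ of simplicial varieties over the simplicial stacks obtained by viewing $\mathfrak{X}$, $\mathfrak{Y}$ as constant simplicial stacks — more precisely hypercovers of $X.\times_{\mathfrak{X}}(\text{atlas})$ etc.\ — with a lift $\tilde f'$, reducing to the hypercover case already treated, and then use the functoriality of $p_*$ (resp.\ $q_*$) with respect to the simplicial maps $X'.\to X.$, $Y'.\to Y.$.

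The main obstacle I anticipate is the bookkeeping needed to make all these refinements \emph{simultaneously} compatible: when one refines $Y.$ one must correspondingly refine $X.$ and re-lift $\tilde f$, and the various pushforward maps $\tilde f_*$, $q_*$, hypercover-pushforwards are only well defined in the rational stable homotopy category, so all the diagrams one writes commute only up to the coherence already established in \cite{Gillet-Soule-Wt-Cplxs-Arith-Vartys}. The right way to organize this is to note that the indexing category of pairs (proper hypercover of $\mathfrak{X}$ by a simplicial variety, proper hypercover of $\mathfrak{Y}$, compatible lift of $f$) is cofiltered — any two objects are dominated by a third, by taking fiber products over the stacks and then re-resolving by Chow's lemma plus resolution of singularities — so the composite $q_*\circ\tilde f_*\circ p_*^{-1}$ is a well-defined element of the limit, independent of the choice. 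Once independence is in hand, compatibility with an arbitrary square reduces, as above, to the already-proven compatibility of the simplicial pushforward with simplicial maps and with hypercover refinement, together with the observation that a proper map of simplicial varieties over a fixed stack can always be dominated by one between proper hypercovers.
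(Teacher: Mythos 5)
Your proposal is correct and follows essentially the route the paper intends: the paper deduces the theorem directly from the lemma that $p_*$ is a weak equivalence for proper hypercovers together with the compatibility with composition of the pushforward constructed in \cite{Gillet-Soule-Wt-Cplxs-Arith-Vartys}, i.e.\ it sets $\mathbf{G}(f):=(f\circ p)_*\circ p_*^{-1}$ for a single proper hypercover $p:X.\to\mathfrak{X}$ and checks independence and the compatibility square by dominating any two choices by a common refinement, exactly as you do. The one simplification you miss is that the intermediate hypercover $Y.\to\mathfrak{Y}$ and the lift $\tilde f$ are unnecessary, since the cited construction already provides $(f\circ p)_*:\mathbf{G}(X.)\to\mathbf{G}(\mathfrak{Y})$ for any proper map from a simplicial variety to a stack; this removes most of the simultaneous-refinement bookkeeping you flag as the main obstacle.
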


It is shown in \cite{Gillet-Soule-Wt-Cplxs-Arith-Vartys} that the natural (in general non-representable) map from a separated Deligne-Mumford
stack to its coarse space
induces an isomorphism on $\mathbf{G}$-theory.

\begin{lemma}\label{surjective}
Let $\mathfrak{X}$ be a Deligne-Mumford stack, and suppose that $\pi:X\to \mathfrak{X}$ is a proper and surjective map,
with $X$ a variety.  Then $\pi_*:G_0(X)\to G_0(\mathfrak{X})$ is surjective.  (Recall that we are using $\mathbb Q$ coefficients.)
\end{lemma}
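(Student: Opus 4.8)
The plan is to reduce the surjectivity of $\pi_*$ on $G_0$ (with $\Q$ coefficients) to a statement about the generic point of $\mathfrak{X}$, using a Noetherian induction on the closed substacks of $\mathfrak{X}$ together with the localization sequence. First I would observe that $G_0(\mathfrak{X})_\Q$ is generated by classes $[\mathcal{O}_{\mathfrak{W}}]$ as $\mathfrak{W}$ ranges over the integral closed substacks of $\mathfrak{X}$: this follows from the dévissage/localization formalism for $\mathbf{G}$-theory of stacks recalled above (the fibration sequence ${\mathbf G}({\mathfrak Y}) \to {\mathbf G}({\mathfrak X}) \to {\mathbf G}({\mathfrak U})$) exactly as in the scheme case. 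So it suffices to hit each $[\mathcal{O}_{\mathfrak{W}}]$, and by replacing $\mathfrak{X}$ by $\mathfrak{W}$ and $X$ by (a component of) $\pi^{-1}(\mathfrak{W})_{\mathrm{red}}$ — which is still proper and surjective onto $\mathfrak{W}$, possibly after further stratifying — we may assume $\mathfrak{X}$ is integral and we need only show that $[\mathcal{O}_{\mathfrak{X}}]$ lies in the image of $\pi_*$ modulo classes supported on a proper closed substack.

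The key point at the generic point: let $\eta$ be the generic point of $\mathfrak{X}$, with residue gerbe, and choose an irreducible component $X'$ of $X$ dominating $\mathfrak{X}$; then $X' \to \mathfrak{X}$ is generically finite, say of generic degree $d$ (here $d$ may be a positive rational number because of the automorphism groups, but that is harmless since we have inverted all integers). Shrinking $\mathfrak{X}$ to a dense open substack $\mathfrak{U}$ over which $X' \to \mathfrak{X}$ is finite and flat, $\pi_*[\mathcal{O}_{X'}]|_{\mathfrak{U}} = d\,[\mathcal{O}_{\mathfrak{U}}]$ in $G_0(\mathfrak{U})_\Q$ (the pushforward of the structure sheaf of a finite flat cover computes the generic degree). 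Hence $\frac{1}{d}\pi_*[\mathcal{O}_{X'}]$ and $[\mathcal{O}_{\mathfrak{X}}]$ have the same image in $G_0(\mathfrak{U})_\Q$, so by the localization sequence their difference comes from $G_0(\mathfrak{Z})_\Q$ where $\mathfrak{Z} = \mathfrak{X}\setminus\mathfrak{U}$ is a proper closed substack. By Noetherian induction (the base case being $\mathfrak{X}$ empty or zero-dimensional, where the statement is again the finite-cover computation on each residual gerbe, using $\Q$-coefficients to divide by the degree), every class supported on $\mathfrak{Z}$ — in particular this difference — is in the image of the pushforward from a proper surjective cover of $\mathfrak{Z}$, which we may take to be dominated by $\pi^{-1}(\mathfrak{Z})$ after refining $X$ if necessary. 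Combining, $[\mathcal{O}_{\mathfrak{X}}] \in \mathrm{image}(\pi_*)$, completing the induction.

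The step I expect to be the main obstacle is the bookkeeping needed to make the induction honest: the cover $\pi$ is fixed, but at each stage of the stratification one needs a proper surjective cover of the stratum, and $\pi^{-1}(\mathfrak{Z})$ need not surject onto $\mathfrak{Z}$ with nice properties unless one is careful. The clean way around this is to not fix $\pi$ but instead prove the a priori stronger statement that $\varinjlim_{\pi} G_0(X)_\Q \to G_0(\mathfrak{X})_\Q$ is surjective over the filtered system of all proper surjective $X \to \mathfrak{X}$ with $X$ a variety, so that at each inductive step one is free to pass to a further cover — and then deduce the statement for a single $\pi$ by observing that any two covers are dominated by a common one (the fiber product, reduced, with an irreducible component chosen to dominate $\mathfrak{X}$, exists as a variety since $\pi$ is representable). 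The only genuinely stacky input is that a separated Deligne-Mumford stack has, étale-locally, the form $[U/G]$ with $G$ finite (Proposition \ref{acyclic}'s geometry), so that finite flat covers with the expected generic degree exist; everything else is the scheme-theoretic argument with $\Q$-coefficients carried along verbatim.
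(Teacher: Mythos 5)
Your proof is correct and follows essentially the same route as the paper: localization plus Noetherian induction reduce everything to the generic point, where one divides by the degree of a finite cover, using $\Q$-coefficients. The one step you should make explicit is the identity $\pi_*[\mathcal{O}_{X'}]=d\,[\mathcal{O}_{\mathfrak{U}}]$ over the generic residual gerbe $\xi$: this holds only because, for the \'etale-sheafified rational $G$-theory used here, $G_*(\xi)$ is a direct summand of $G_*(F)$ (where $F$ is the field on which the finite \'etale groupoid acts), so that $G_0(\xi)_\Q$ is spanned by $[\mathcal{O}_\xi]$ and transfers from finite extensions are surjective --- this is precisely the paper's stated key observation, and it is exactly what fails for naive equivariant $K$-theory, where the pushforward from an atlas of $BG$ is the regular representation rather than a multiple of the trivial one.
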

\begin{proof}
First, observe that if $\xi$ is a generic point of $\mathfrak{X}$, the stack $\xi$ is equivalent to a
finite \'etale groupoid acting on a field $F$, and $G_*(\xi)$ is a
direct summand of $G_*(F)$ (see \cite{Gillet-Intersection-theory-on-stacks}).
Hence if $F\subset E$ is any finite extension of $F$, the direct image, or transfer,
map $G_*(E)\to G_*(\xi)$ is surjective, since we are using $G$-theory with rational coefficients). Hence if $f:X\to\xi$ is
a proper morphism from a scheme to $\xi$, it also induces a surjective map on $G$-theory.
The lemma now follows by localization and noetherian induction.
\end{proof}

\subsection{$K$-theory of stacks}
Just as we defined the $G$-theory of a stack as the hypercohomology of the $G$-theory sheaf, we can
consider the presheaf $\mathbf K$ of $K$-theory spectra  which associates to $X$ the $K$-theory spectrum (again with rational coefficients) of the
category of locally free sheaves on the big Zariski site, and then define:
$${\mathbf K}(\mathfrak{X}):=\mathbf{H}_{\text{\'et}}(\mathfrak{X},\mathbf{K})$$
This gives a contravariant functor from stacks to ring spectra.  The tensor product between locally free and coherent sheaves induces a pairing
$${\mathbf K}(\mathfrak{X})\wedge {\mathbf G}(\mathfrak{X})\to {\mathbf G}(\mathfrak{X}) .$$
\begin{lemma}
If $f:\mathfrak{X}\to \mathfrak{Y}$ is a proper representable morphism between stacks,
the projection formula holds, \ie the following diagram is commutative in the stable homotopy category
$$
\begin{CD}
\mathbf{K}(\mathfrak{Y})\wedge \mathbf{G}(\mathfrak{X})
    @>{1\wedge f_*}>> \mathbf{K}(\mathfrak{Y})\wedge \mathbf{G}(\mathfrak{Y}) @>>>
    \mathbf{G}(\mathfrak{Y}) \\
@V{f^*\wedge 1}VV  @.  @AA{f_*}A\\
\mathbf{K}(\mathfrak{X})\wedge \mathbf{G}(\mathfrak{X}) @>>> \mathbf{G}(\mathfrak{X}) @= \mathbf{G}(\mathfrak{X})
\end{CD}
$$
\end{lemma}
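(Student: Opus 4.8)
The plan is to reduce the assertion to the classical projection formula for proper morphisms of schemes, using the fact recorded above that $\mathbf{G}$ and $\mathbf{K}$ of a Deligne--Mumford stack are computed as homotopy limits along an \'etale hypercover, together with the representability of $f$.

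First I would choose an \'etale hypercover $q_\bullet\colon V_\bullet\to\mathfrak{Y}$ with each $V_i$ a scheme. Because $f$ is representable, the base change $U_\bullet:=\mathfrak{X}\times_{\mathfrak{Y}}V_\bullet$ is a simplicial algebraic space; refining by an \'etale hypercover by schemes if necessary, we may assume the $U_i$ are schemes. The augmentation $p_\bullet\colon U_\bullet\to\mathfrak{X}$ is an \'etale hypercover, since \'etale hypercovers are stable under base change, and each induced map $f_i\colon U_i\to V_i$ is proper, since properness is stable under base change. By Thomason's descent theorem (\cite{AKTEC}) and the remark following the definition of $\mathbf{G}(\mathfrak{X})$, together with the analogous statement for the presheaf $\mathbf{K}$, the natural maps
\begin{multline*}
\mathbf{G}(\mathfrak{X})\xrightarrow{\ \sim\ }\mathrm{holim}_i\,\mathbf{G}(U_i),\quad \mathbf{G}(\mathfrak{Y})\xrightarrow{\ \sim\ }\mathrm{holim}_i\,\mathbf{G}(V_i),\\
\mathbf{K}(\mathfrak{X})\xrightarrow{\ \sim\ }\mathrm{holim}_i\,\mathbf{K}(U_i),\quad \mathbf{K}(\mathfrak{Y})\xrightarrow{\ \sim\ }\mathrm{holim}_i\,\mathbf{K}(V_i)
\end{multline*}
are weak equivalences. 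Under these identifications, $f_*$ on $\mathbf{G}$ is --- by its very construction --- the map induced on homotopy limits by the levelwise pushforwards $(f_i)_*$; the pullback $f^*$ on $\mathbf{K}$ is induced by the levelwise $f_i^*$; and, using the lax symmetric monoidal structure of the hypercohomology functor $\mathbf{H}_{\text{\'et}}(-,\cdot)$ together with the fact that the module action $\mathbf{K}\wedge\mathbf{G}\to\mathbf{G}$ is a morphism of presheaves of spectra, the two tensor pairings in the diagram are induced, via the lax monoidality of $\mathrm{holim}$, by the levelwise pairings $\mathbf{K}(U_i)\wedge\mathbf{G}(U_i)\to\mathbf{G}(U_i)$ and $\mathbf{K}(V_i)\wedge\mathbf{G}(V_i)\to\mathbf{G}(V_i)$.

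It follows that the square in question is obtained by applying $\mathrm{holim}_i$ to the $\Delta$-indexed diagram of squares attached to the proper morphisms of schemes $f_i\colon U_i\to V_i$. For each $i$ that square is exactly the classical projection formula relating the $K$-theory and $G$-theory of $U_i$ and $V_i$, which holds up to a homotopy natural in the scheme arguments; these homotopies therefore assemble over $\Delta$, and since $\mathrm{holim}$ carries a homotopy-commutative square of $\Delta$-diagrams to a homotopy-commutative square, the diagram for $f\colon\mathfrak{X}\to\mathfrak{Y}$ commutes in the stable homotopy category. A standard cofinality argument shows the conclusion is independent of the chosen hypercover.

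The main obstacle is the coherence bookkeeping in establishing the identifications above: one must verify that the globally defined product pairing on $\mathbf{K}(\mathfrak{X})\wedge\mathbf{G}(\mathfrak{X})$ really is computed levelwise along the hypercover, and that the scheme-level projection formula can be upgraded from an identity in the homotopy category to a coherently natural homotopy surviving passage to $\mathrm{holim}$. Both points are handled by working throughout with the fixed symmetric-spectra model for $K$- and $G$-theory used in the construction of $\mathbf{G}(\mathfrak{X})$ (following \cite{Gillet-Soule-Wt-Cplxs-Arith-Vartys}), in which the relevant pairings and the projection formula are realized by explicit natural transformations; once that model is fixed, the homotopy-limit argument is formal.
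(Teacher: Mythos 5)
Your argument is correct and follows essentially the same route as the paper: base-change an \'etale hypercover of $\mathfrak{Y}$ along the representable morphism $f$ to get an \'etale hypercover of $\mathfrak{X}$ with levelwise proper maps of schemes, identify everything with homotopy limits via descent, and invoke the scheme-level projection formula (which the paper notes holds up to canonical isomorphism for the tensor product of locally free and flasque quasi-coherent sheaves, giving exactly the naturality you need for the homotopies to assemble over $\Delta$). Your treatment is somewhat more explicit about the coherence bookkeeping, but the underlying proof is the same.
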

\begin{proof}
Since $f$ is representable, if $\pi:V.\to \mathfrak{Y}$ is any \'etale hypercover,
$W.=V.\times_\mathfrak{Y}\mathfrak{X} \to \mathfrak{X}$ is also an \'etale hypercover, and so the result follows from the same statement for
the morphisms $W_i\to V_i$ of schemes. For schemes the assertion follows from projection formula for the
tensor product of locally free and flasque quasi-coherent sheaves, which is true up to canonical isomorphism.
\end{proof}

Combining lemma \ref{surjective} with the previous lemma, we get:
\begin{cor}
If $\pi:{X}\to \mathfrak{Y}$ is a proper surjective morphism from a regular
variety to a regular Deligne-Mumford stack,
then $\pi^*:K_0(\mathfrak{Y})\to K_0({X})$ is a split injection.
\end{cor}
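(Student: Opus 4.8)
The plan is to write down an explicit left inverse of $\pi^*$, built from the projection formula together with the surjectivity of $\pi_*$ on rational $G$-theory. To set things up, first observe that since $\mathfrak{Y}$ is a Deligne-Mumford stack its diagonal is representable (and unramified), so the morphism $\pi$ from the scheme $X$ is automatically representable; hence the projection-formula lemma proved above applies to $\pi$. Next I will use that $K_0(-)_\Q\cong G_0(-)_\Q$ for regular Deligne-Mumford stacks: for a regular scheme the comparison map from the $K$-theory of locally free sheaves to the $G$-theory of coherent sheaves is a weak equivalence (Quillen), and every object of the \'etale site of a regular Deligne-Mumford stack is a regular scheme, so the map of presheaves $\mathbf{K}\to\mathbf{G}$ is a sectionwise weak equivalence there; taking \'etale hypercohomology yields natural isomorphisms $P'\colon K_0(\mathfrak{Y})_\Q\overset{\sim}{\to}G_0(\mathfrak{Y})_\Q$ and $P\colon K_0(X)_\Q\overset{\sim}{\to}G_0(X)_\Q$, each given by cap product with the structure sheaf, $P'(a)=a\cdot[\mathcal{O}_{\mathfrak{Y}}]$ and $P(x)=x\cdot[\mathcal{O}_X]$, where the products use the module pairing $\mathbf{K}\wedge\mathbf{G}\to\mathbf{G}$.

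With this in hand the construction is immediate. By Lemma \ref{surjective}, $\pi_*\colon G_0(X)_\Q\to G_0(\mathfrak{Y})_\Q$ is surjective, so I choose $\beta\in G_0(X)_\Q$ with $\pi_*(\beta)=[\mathcal{O}_{\mathfrak{Y}}]$ and define
$$
s\colon K_0(X)_\Q\longrightarrow K_0(\mathfrak{Y})_\Q,\qquad s(x)=(P')^{-1}\bigl(\pi_*(x\cdot\beta)\bigr).
$$
For $a\in K_0(\mathfrak{Y})_\Q$ the projection-formula lemma gives $\pi_*\bigl(\pi^*(a)\cdot\beta\bigr)=a\cdot\pi_*(\beta)=a\cdot[\mathcal{O}_{\mathfrak{Y}}]=P'(a)$, whence $s(\pi^*a)=a$. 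Thus $s$ is a left inverse of $\pi^*$, and $\pi^*$ is a split injection.

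The argument is entirely formal once its two genuine inputs are available: the rational identification $K_0\cong G_0$ for regular Deligne-Mumford stacks, which is where regularity of $\mathfrak{Y}$ and the \'etale-descent definitions of $\mathbf{K}$ and $\mathbf{G}$ enter, and the surjectivity of $\pi_*$ on $G_0$, which is Lemma \ref{surjective} and is really the only non-formal ingredient. Correspondingly, the points that need care are verifying that $\pi$ is representable so that the projection formula is available, and checking that the descent spectral sequences computing $K_0$ and $G_0$ behave well with $\Q$-coefficients so that $P$ and $P'$ are honest isomorphisms; both are guaranteed by the hypotheses and by the results recalled in this section.
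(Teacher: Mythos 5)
Your proof is correct and follows exactly the route the paper intends: the corollary is stated as an immediate consequence of ``combining Lemma \ref{surjective} with the previous lemma,'' i.e.\ choosing $\beta$ with $\pi_*(\beta)=[\mathcal{O}_{\mathfrak{Y}}]$ and applying the projection formula, which is precisely your construction of the left inverse $s$. The details you supply --- representability of $\pi$ since its source is a scheme, and the rational identification $K_0\cong G_0$ on the \'etale site of a regular stack --- are the correct justifications for the steps the paper leaves implicit.
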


\subsection{Operations on the $K$-theory of stacks}

Before constructing an intersection product on the arithmetic Chow
groups of a stack, we must first construct a product on the
ordinary Chow groups of stacks over an arithmetic ring. As in
\cite{SGA-6} and \cite{Gillet-Soule-IHES-arith-intn-thy}, we shall replace the Chow groups of $\X$ by the
graded group associated to the $\gamma$-filtration on $K_0(\X)$.  However
there are some technicalities involved in doing this.

Recall that if $\mathfrak{X}$ is a Deligne-Mumford stack, then the
$G$-theory, with rational coefficients of $\mathfrak{X}$ is defined to
be
$$\mathbf{G}(\X):=\mathbf{H}_{et}(\mathfrak{X},\mathbf{G})$$
where $\mathbf{G}$ is the presheaf of $G$-theory spaces in the \'etale topology of $\mathfrak{X}$
associated to the Quillen $K$-theory of coherent sheaves of $\mathcal{O}_\X$-modules, and similarly the $K$-theory of $\X$ is
defined to be
$$\mathbf{K}(\X):=\mathbf{H}_{et}(\mathfrak{X},\mathbf{K})$$
where $\mathbf{K}$ is the presheaf of $K$-theory spaces associated
to the $K$-theory of locally free sheaves. In order to define the
$\gamma$-filtration we need to have $\lambda$-operations on the
sheaf $\mathbf{K}$. While the method of
\cite{Gillet-Soule-Filtrations-on-alg-kthy}, applied to any \'etale
hypercover  $V.\to\X$ by a simplicial regular scheme, should lead to
a construction of $\lambda$-operations, it is not clear that
the simplicial scheme $V.$ is $K$-coherent, in the terminology of
\emph{op. cit.}  However if we take as a model for $K$-theory the
presheaf of simplicial sets associated to the $G$-construction of
\cite{Gillet-Grayson-G-construction}, and the variations on the
G-construction described in \cite{Grayson-lambda-operations} applied
to the sheaf of categories $\mathcal{P}_\X$ associated to the
category of locally free sheaves of $\mathcal{O}_\X$-modules on the
\'etale site of $\X$, then we have, following \emph{op. cit}, maps
of simplicial sheaves
$$\lambda^k:\mathrm{Sub}_k G(\mathcal{P}_\X)\to G^{(k)}(\mathcal{P}_\X) $$
with both the domain and the target canonically weakly equivalent to
$G(\mathcal{P}_\X)$. It is shown in \textit{op. cit.} that if $R$ is
a commutative ring, then these maps induce the same maps on
$K$-theory as those of  \cite{Hillet-lambda-operations},  \cite{Kratzer-lambda-structure} and \cite{Soule-operations-CJM}.
\begin{thm}
Let $\X$ be a regular Deligne-Mumford stack.  Then the
$\gamma$-filtration on $K_0(\X)$ has finite length and there are
isomorphisms:
$$\mathrm{Gr}^p_\gamma(K_0(\X)) \simeq \mathrm{Gr}^p(K_0(\X))\simeq \CH^p(\X)_\mathbb{Q}.$$
Here $\mathrm{Gr}^\ast$ is the filtration by codimension of supports.
Furthermore,
if $\mathfrak{Y}\subset \mathfrak{X}$ is a closed substack of pure codimension $e$, then:
$$ Gr_\gamma^*K^{\mathfrak{Y}}_0(\mathfrak{X})\simeq \CH^{*-e}({\mathfrak{Y}})_\mathbb{Q}$$
\end{thm}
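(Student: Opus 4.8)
The plan is to reduce everything to the known case of regular schemes by using an étale hypercover and a transfer/descent argument, exploiting the fact that all the $K$- and $G$-theory in sight is rational and that the $\gamma$-filtration is compatible with the descent spectral sequence. First I would recall the classical statement for a regular scheme $X$ (e.g. from \cite{SGA-6}, or \cite{Gillet-Soule-Filtrations-on-alg-kthy}): the $\gamma$-filtration on $K_0(X)$ has finite length, $\mathrm{Gr}^p_\gamma(K_0(X))\simeq\mathrm{Gr}^p(K_0(X))\simeq\CH^p(X)_\Q$, and $Gr^*_\gamma K^Y_0(X)\simeq\CH^{*-e}(Y)_\Q$ for $Y\subset X$ of pure codimension $e$. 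Then I would choose an étale hypercover $\pi:V.\to\X$ by a simplicial regular scheme with proper face maps (which exists since $\X$ is a separated Deligne–Mumford stack). By the definition of $\mathbf K(\X)$ and $\mathbf G(\X)$ as étale hypercohomology, and the degeneration properties of the descent spectral sequence with $\Q$-coefficients, we have $K_0(\X)_\Q\simeq$ the equalizer $\ker\big(K_0(V_0)_\Q\rightrightarrows K_0(V_1)_\Q\big)$ — more precisely, since higher cohomology of the relevant sheaves on a Deligne–Mumford stack with $\Q$-coefficients can contribute, I would instead argue that the natural map $K_0(\X)\to\mathrm{holim}\,K_0(V_i)$ lands in, and surjects onto, the part of the spectral sequence surviving to $E_\infty$, and that the $\lambda$-operations built via the $G$-construction (as in \cite{Grayson-lambda-operations}, applied to $\mathcal P_\X$) are compatible with the restriction maps to each $V_i$. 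This gives compatible $\lambda$-operations on the whole descent spectral sequence, hence a $\gamma$-filtration on $K_0(\X)$ that is the one induced from the $V_i$'s.

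Next I would establish finite length. Using Lemma~\ref{surjective} and its corollary (a proper surjective $\pi:X\to\X$ from a regular variety gives a split injection $K_0(\X)\to K_0(X)$, split by $\frac1d\pi_*$ on the appropriate components), $K_0(\X)_\Q$ is a direct summand of $K_0(X)_\Q$ as a $\lambda$-ring-module, and the $\gamma$-filtration on $K_0(\X)$ is the restriction of that on $K_0(X)$; since the latter has finite length (bounded by $\dim X$), so does the former. The same corollary shows that the codimension-of-supports filtration $\mathrm{Gr}^p$ on $K_0(\X)_\Q$ is a summand of that on $K_0(X)_\Q$, and by the description of cycles on a stack (a cycle on $\X$ is a cycle on the coarse space $|\X|$) together with the identification $\CH^p(\X)_\Q$ with a summand of $\CH^p(X)_\Q$ cut out by the étale-descent relations, I would match the two filtrations. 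The key point is that the classical equalities $\mathrm{Gr}^p_\gamma=\mathrm{Gr}^p=\CH^p(\,\cdot\,)_\Q$ on each $V_i$ are functorial for flat pullback and proper pushforward (with the projection formula), hence pass to the descent-invariant summand; concretely, $\mathrm{Gr}^p_\gamma(K_0(\X))$ injects into $\mathrm{Gr}^p_\gamma(K_0(V_0))=\CH^p(V_0)_\Q$ with image the descent-invariant part, which is $\CH^p(\X)_\Q$ by the definition of Chow groups of a stack via the coarse space, and likewise for $\mathrm{Gr}^p$. The localization sequence for $G$-theory of stacks (stated in the excerpt) together with the compatibility of $\lambda$-operations with localization gives the relative statement: for $\mathfrak Y\subset\X$ closed of pure codimension $e$, one uses the fibration sequence $\mathbf G(\mathfrak Y)\to\mathbf G(\X)\to\mathbf G(\mathfrak U)$, the regularity of $\X$ (so $K=G$), and dévissage on the coarse spaces to get $Gr^*_\gamma K^{\mathfrak Y}_0(\X)\simeq Gr^{*-e}_\gamma G_0(\mathfrak Y)\simeq\CH^{*-e}(\mathfrak Y)_\Q$.

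The main obstacle I expect is the bookkeeping around the descent spectral sequence: on a Deligne–Mumford stack the sheaf $\mathbf K$ need not have vanishing higher cohomology even rationally (this is exactly why one passes to the coarse space for $G$-theory), so I cannot simply say $K_0(\X)=$ equalizer of $K_0(V_0)\rightrightarrows K_0(V_1)$. The cleanest way around this is to avoid hypercovers at the level of $K_0$ entirely and instead work with a single $\pi:X\to\X$ proper surjective from a regular variety, use the splitting $K_0(\X)_\Q\hookrightarrow K_0(X)_\Q$ of the corollary as a retract in the category of $\lambda$-rings (one must check the splitting map $\frac1d\pi_*$, or rather the idempotent $e=\frac1d\pi^*\pi_*$ wait — one needs $\pi_*\pi^*=d$, which is the content of the degree statement, so $e=\frac1d\pi^*\pi_*$ is idempotent on $K_0(X)_\Q$ with image $\pi^*K_0(\X)_\Q$), and then observe that $\gamma$-operations on $K_0(X)_\Q$ restrict to the image of any such idempotent coming from correspondences — this is a formal property of the $\gamma$-filtration under the action of the category of Chow motives / correspondences, once one knows the idempotent is given by an algebraic correspondence, which it is since $\pi^*$ and $\pi_*$ are. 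Granting that, the three isomorphisms and the finite-length statement all descend from $X$ to $\X$, and the relative statement follows by the same idempotent argument applied to $K^{\mathfrak Y}_0$ via the localization sequence. The one genuinely nontrivial input beyond the excerpt is that the classical isomorphisms on regular schemes are compatible with the action of algebraic correspondences, which I would cite from \cite{SGA-6} and \cite{Gillet-Soule-Filtrations-on-alg-kthy}.
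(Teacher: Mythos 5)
Your strategy is genuinely different from the paper's, and as written it has a gap at the decisive step. The paper does not reduce to a scheme cover at all: it runs the coniveau spectral sequence $E_1^{p,q}=\bigoplus_{\xi\in\X^{(p)}}K_{q-p}(\xi)$ directly on the stack, lets the Adams operations act on it, and uses the purity equivalence $Ri^!\mathbf{K}_\X\simeq\mathbf{K}_\xi$ (Quillen localization applied \'etale-locally) to see that the weights are shifted by the codimension of $\xi$; Soul\'e's degeneration argument then identifies $\mathrm{Gr}^p_\gamma K_0(\X)_\Q$ with $E_2^{p,p}\simeq\CH^p(\X)_\Q$, and the statement with supports comes from the same argument. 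Your reduction does correctly yield finite length of the $\gamma$-filtration and an injection $\mathrm{Gr}^p_\gamma K_0(\X)_\Q\hookrightarrow \mathrm{Gr}^p_\gamma K_0(X)_\Q\simeq\CH^p(X)_\Q$ (since $\pi^*$ is an injective $\lambda$-ring map, the Adams eigenspace decomposition is inherited). The gap is in identifying the image with $\CH^p(\X)_\Q$.

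Concretely: for the \'etale-hypercover route you yourself note that $K_0(\X)$ is not the equalizer of $K_0(V_0)\rightrightarrows K_0(V_1)$; the higher terms of the descent spectral sequence contribute to $K_0(\X)$ precisely in the weights you care about, and showing that the weight-$p$ part is computed by $H^p(\X,\mathcal{K}_p)\simeq\CH^p(\X)$ is Bloch's formula, which depends on Gersten's conjecture --- unavailable for general regular schemes, as the introduction stresses. For the single proper generically finite cover $\pi:X\to\X$, identifying the image of your idempotent inside $\CH^p(X)_\Q$ with $\CH^p(\X)_\Q$ presupposes a pullback $\pi^*:\CH^p(\X)_\Q\to\CH^p(X)_\Q$ compatible with the Grothendieck isomorphism; but $\pi$ is not flat, and such a pullback on Chow groups of the stack is exactly what this theorem is being used to construct, so the argument is circular. (There is no ``descent-invariant part'' of $\CH^p(X)_\Q$ for a proper generically finite, non-\'etale, cover.) A secondary point: preservation of the $\gamma$-filtration by an arbitrary algebraic correspondence is not a formal property --- it is an Adams--Riemann--Roch statement; rationally, via eigenspaces, it does hold for your particular relative-dimension-zero idempotent, but that must be invoked, not asserted as formal. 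To close the gap you would in effect have to rerun the coniveau/Adams-weight argument on $\X$ itself, which is what the paper does.
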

\begin{proof}
The coniveau spectral sequence for $\mathbf{K}(\mathfrak{X})$ has
$$E^{p,q}_1=\bigoplus_{\xi\in\mathfrak{X}^{(p)}}K_{q-p}(\xi)$$
where the direct sum is over punctual substacks of $\mathfrak{X}$.  As in \cite{Soule-operations-CJM},
consider the action of the Adams operations on the coniveau spectral sequence
(the Adams operations are linear combinations of the lambda operations).
If $\xi\in\mathfrak{X}$ is a point,
let $i:\xi\to\mathfrak{X}$ be the inclusion, and let $i^!$ be the ``sections with support'' functor.
Quillen's localization theorem (applied locally in the \'etale topology) implies that
we have a natural weak equivalence $Ri^!\mathbf{K}_\mathfrak{X}\simeq \mathbf{K}_\xi$.
Following the argument in \cite{Soule-operations-CJM}, this isomorphism shifts the weights of the
Adams operations by the codimension of $\xi$. We have isomorphisms
 $K_*(\xi)\simeq K_*(\mathbf{k}(\xi))$ (remembering that we are using $\Q$-coefficients),
 and just as in \emph{op. cit} we get that the coniveau spectral sequence is
 partially degenerate. Using the fact that $E_2^{p,p}\simeq\CH^p(\X)_\Q$, we then have the isomorphism of the theorem. The assertion
 about $K$-theory with supports follows from the same argument.
\end{proof}

The product structure on $K$-theory is compatible with the Adams operations (see \emph{op. cit}
and \cite{Gillet-Soule-Filtrations-on-alg-kthy}), and so the groups $\mathrm{Gr}^p_\gamma(K_0(\X))$ form
a contravariant functor from stacks to commutative graded rings. Via the isomorphism of the theorem, this gives
the Chow groups of stacks the same structure. We know from the appendix to exp. 0 of SGA6, \cite{SGA-6},
that if $X$ is a regular noetherian scheme, then the isomorphism
$ Gr_\gamma^*K_0({X})\simeq \CH^*(X)_\mathbb{Q}$
is compatible with the product on the Chow groups whenever it is defined.  Furthermore by the argument of
\cite{Gillet-Soule-Intersection-thy-adams-inventiones}, applied on an \'etale presentation of
$\mathfrak{X}$,
we know that if $\mathfrak{Y}\subset \mathfrak{X}$ and
$\mathfrak{Z}\subset \mathfrak{X}$ are integral substacks of codimension $p$ and $q$ respectively, intersecting properly, the intersection product
$[\mathfrak{Y}].[\mathfrak{Z}]$ computed using the
isomorphism $ Gr_\gamma^pK^{\mathfrak{Y}}_0(\mathfrak{X})\simeq \CH^0({\mathfrak{Y}})_\mathbb{Q}$ and
$ Gr_\gamma^qK^{\mathfrak{Z}}_0(\mathfrak{X})\simeq \CH^0({\mathfrak{Z}})_\mathbb{Q}$,
agrees with the product computed using Serre's intersection multiplicities.

Thus we have given another construction of the product structure on the Chow groups in the case of stacks over a field.

\subsection{Motives of stacks}
Let $\mathfrak{X}$ be a proper Deligne-Mumford stack over $S$. Recall from \cite{Gillet-Soule-Wt-Cplxs-Arith-Vartys}
that there exist proper hypercovers $\pi:X.\to\mathfrak{X}$ with the $X_i$ regular.
(We refer to these as non-singular proper hypercovers). Following \emph{op. cit.}, there is a covariant functor
$h$ from the category of regular projective schemes over $S$ to the category of $K_0$-motives
with rational coefficients.  Applying this functor to a proper hypercover $\pi:X.\to\mathfrak{X}$ we
get a chain complex of motives $h(X.)$.  It is proved in \emph{op. cit.} that if we have two
non-singular proper hypercovers $\pi_i:(X.)_i\to\mathfrak{X}$ for $i=1,2$, and a map
$f:(X.)_1\to(X.)_2$  of hypercovers (\ie $f\circ\pi_2=\pi_1$)
then the induced map of chain complexes of motives $h((X.)_1)\to h((X.)_2)$ is a
homotopy equivalence, and then that given any two nonsingular hypercovers $\pi_i:(X.)_i\to\mathfrak{X}$, irrespective
of whether there is a map between them, there is, nonetheless, a canonical isomorphism in the homotopy category of
chain complexes quasi-isomorphic to bounded complexes of $K_0$-motives: $h((X.)_1)\to h((X.)_2)$.
It follows upon applying the functor $K_0(\;)_\Q$, that we get, for each nonsingular proper hypercover
$\pi:X.\to\mathfrak{X}$ a cochain complex $K_0(X.)_\Q$ which is independent, up to homotopy equivalence, of the choice
of hypercover.

\begin{thm}\label{weight-complex-is-exact}
Suppose that the Deligne-Mumford stack $\mathfrak{X}$, flat and proper over $S$, is regular. Then for any
non-singular proper hypercover $\pi:X.\to\mathfrak{X}$, any regular variety $Y$ and any $p\geq 0$,
the augmentation
$$\pi_*:(i\mapsto G_p(X_i\times_S Y))\to G_p(\mathfrak{X}\times_S {Y})$$
is a quasi-isomorphism.  It follows by the argument of \cite{Gillet-Soule-Wt-Cplxs-Arith-Vartys}
that the associated complex of motives $h(X.)$ is exact in positive
degrees.
\end{thm}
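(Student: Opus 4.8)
The statement about $h(X_\bullet)$ follows from the quasi-isomorphism assertion by the argument of \cite{Gillet-Soule-Wt-Cplxs-Arith-Vartys}: the functors $h(X_\bullet)\mapsto G_p(X_\bullet\times_SY)$, for $p\geq 0$ and $Y$ a regular variety, detect exactness of complexes of $K_0$-motives. So the plan is to prove the $G$-theory assertion. By the descent lemma for proper hypercovers the map $\pi_\ast\colon\mathbf{G}(X_\bullet\times_SY)\to\mathbf{G}(\X\times_SY)$ is already a weak equivalence after geometric realization, so what remains is to show that the spectral sequence of the simplicial spectrum $i\mapsto\mathbf{G}(X_i\times_SY)$, with $E^1_{s,t}=G_t(X_s\times_SY)$ and differential $\sum_j(-1)^j(d_j)_\ast$, collapses onto the column $s=0$ --- equivalently, that for each $p$ the complex $\bigl(i\mapsto G_p(X_i\times_SY)\bigr)$ is a resolution of $G_p(\X\times_SY)$. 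Since the weight complex of a proper Deligne--Mumford stack is well defined up to homotopy, being concentrated in degree zero is a property of it alone, so it is enough to verify this for one conveniently chosen non-singular proper hypercover.

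The plan for that is d\'evissage along $\X$. First I would choose a non-singular proper hypercover $\pi\colon X_\bullet\to\X$ \emph{adapted} to a stratification $\X=\coprod_j\mathfrak{Z}_j$ into regular locally closed substacks, meaning that each $\pi_i$ is smooth over a neighborhood of each $\mathfrak{Z}_j$; equivalently, for every point $\xi$ of $\X$ of codimension $c$ the reduced fiber $\bigl(X_i\times_{\X}\xi\bigr)_{\mathrm{red}}$ is smooth and proper over $\mathbf{k}(\xi)$ and $\bigl(X_\bullet\times_{\X}\xi\bigr)_{\mathrm{red}}\to\xi$ is again a non-singular proper hypercover of the residue gerbe. (Over a field such stratification-compatible hyperresolutions are provided by the methods of Guillen and Navarro Aznar; over $S$ one substitutes de Jong's alterations and passes to $\Q$-coefficients, and for $\X$ a stack the construction is carried out on an \'etale presentation and descended.) I would then filter the spectrum $\mathbf{G}(X_i\times_SY)$ by the codimension in $\X$ of the support of coherent sheaves, that is, by the preimages under $\pi_i$ of the closures of the strata of codimension $\geq c$. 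Because the face maps lie over $\X$ this filtration is respected by the push-forwards, so it filters the whole simplicial spectrum; the resulting tower is finite since $\X$ is noetherian of finite dimension.

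By Quillen's localization theorem and d\'evissage --- both valid for Deligne--Mumford stacks --- the $c$-th graded piece of this tower is $\coprod_{\xi\in\X^{(c)}}\mathbf{G}\bigl((X_\bullet\times_{\X}\xi)_{\mathrm{red}}\times_SY\bigr)$, which by the adaptedness is the $G$-theory of a non-singular proper hypercover of $\xi$, and whose geometric realization is $\coprod_{\xi}\mathbf{G}(\xi\times_SY)$, the $c$-th graded piece of the coniveau tower of $\mathbf{G}(\X\times_SY)$. Now I would invoke Toen's theorem on the weight complex of a variety over a field --- applied to the residue fields of $\X$, which are of characteristic zero over the generic point of $S$ and in general of finite type over $S$ so that de Jong's alterations apply, and then extended from fields to the residue gerbes by the transfer argument of Lemma~\ref{surjective} --- to conclude that the $G$-theory complex of each graded piece is a resolution, i.e.\ its spectral sequence collapses onto $s=0$.

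To finish, a standard argument for a finite filtration --- induction on the length of the coniveau tower, using the long exact homotopy sequences of the cofiber sequences of simplicial spectra --- transports the collapse from the graded pieces to $\mathbf{G}(X_\bullet\times_SY)$ itself, and comparison with the coniveau filtration on $\mathbf{G}(\X\times_SY)$ identifies the term in degree zero as $G_p(\X\times_SY)$, realised by $\pi_\ast$. The assertion for $K$-theory with supports in a closed substack $\mathfrak{Y}\subset\X$ is obtained by running the same d\'evissage on the localization fiber sequences. I expect the real obstacle to lie in the second paragraph: constructing a single non-singular proper hypercover of $\X$ compatible with a regular stratification --- this is where resolution of singularities (alterations, over $S$) is needed, and where one must descend the construction from an \'etale presentation to $\X$. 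The remaining ingredients --- Toen's theorem over fields, the well-definedness of the weight complex of a proper Deligne--Mumford stack, and the push-forward along non-representable proper morphisms of Theorem~\ref{stacks} --- are quoted rather than reproved, and checking compatibility of the filtrations with $\pi_\ast$ is routine.
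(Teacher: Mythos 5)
You have correctly isolated where your argument would have to do its real work, but that is exactly where it breaks: the ``adapted'' non-singular proper hypercover you posit does not exist in the generality required, and no cited result produces it. As stated, requiring $\pi_i$ to be smooth over a neighborhood of \emph{every} stratum of a stratification $\X=\coprod_j\mathfrak{Z}_j$ forces $\pi_i$ to be smooth everywhere; a smooth, proper, generically finite surjection is finite \'etale, and a regular Deligne--Mumford stack need not admit any such cover by a scheme. The weaker reformulation --- that $(X_i\times_{\X}\xi)_{\mathrm{red}}$ be smooth and proper over $\mathbf{k}(\xi)$ and that $(X_\bullet\times_{\X}\xi)_{\mathrm{red}}\to\xi$ be again a proper hypercover, simultaneously for \emph{all} points $\xi$ --- is far beyond what Guill\'en--Navarro Aznar or de Jong provide, especially for the higher simplicial pieces $X_i$, $i\geq 1$, which dominate resolved fiber powers $X_0\times_\X\cdots\times_\X X_0$ and whose fibers over points of $\X$ are not controlled. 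There are secondary problems as well: the reduction to ``one conveniently chosen hypercover'' needs the homotopy equivalence between weight complexes to be compatible with the augmentation to $\X$, which requires the correspondence formalism over $\X$ developed only in section 5 of the paper; and the descent input you want over the residue gerbes of positive-characteristic points concerns imperfect, finitely generated fields, where Toen's theorem does not directly apply.

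The paper's proof is much softer and avoids d\'evissage entirely. By Lemma \ref{surjective} (which is where $\Q$-coefficients and the transfer for gerbes enter), $\pi_*:G_0(X_0)\to G_0(\X)$ is surjective, so one can choose $\eta\in G_0(X_0)$ with $\pi_*(\eta)=[\mathcal{O}_{\X}]$. The projection formula then shows that $\alpha\mapsto\pi^*(\alpha)\cdot\eta$ splits $\pi_*$, and flatness of $\X$ over $S$ makes this splitting persist after applying $-\times_S Y$. The class $\eta$ is then packaged as a $K_0$-correspondence: one checks that $\phi_*(\theta)q'^*(\eta)$, with $\theta=[\mathcal{O}_X{}^L\otimes_{\mathcal{O}_{\X}}\mathcal{O}_X]$, is a projector in $G_0(X\times_S X)$ lifting $[\mathcal{O}_{\Gamma_\pi}]$, and from there the contracting homotopy $h_i:X_i\to X_{i+1}$ is built inductively exactly as in the descent argument of Gillet--Soul\'e, \emph{Descent, motives and $K$-theory}. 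In short, the one idea your proposal is missing is that a single $G_0$-class splitting the augmentation at level zero, produced by surjectivity plus the projection formula, already suffices to contract the whole complex; no coniveau filtration, stratified resolution, or collapse of spectral sequences is needed.
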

\begin{proof}
As in \cite{Gillet-Soule-descent-motives}, we want to construct by induction $K_0$-correspondences
$h_i:X_i\to X_{i+1}$ for $i\geq -1$, (where we set $X_{-1}=\mathfrak{X}$)
which provide a contracting homotopy of the complex.
The slight complication here is that we need to start the induction with a stack. We could
extend the theory of correspondences and motives to stacks,
as was done in \cite{Toen_motives_for_DM-Stacks} for stacks over fields.  However we shall do something
more ad-hoc.  By lemma \ref{surjective}, we know that there is a class $\eta\in G_0(X_0)$ such that
$\pi_*(\eta)=[\mathcal{O}_{\mathfrak{X}}]$. It follows from the projection formula that if
$\alpha\in K_p(\mathfrak{X})$, then $\pi_*(\pi^*(\alpha)\gamma)=\alpha$, and furthermore, since we assume that
$\mathfrak{X}$ is flat over $S$, for any scheme $Y$ over $S$,
if $\alpha \in K_0(\mathfrak{X}\times_S Y)$, and $\pi_Y:X_0\times_S Y \to \mathfrak{X}\times_S Y $ is the induced map,
and $p:X_0\times_S Y \to X_0$ the projection, we have:
$$(\pi_Y)_*((\pi_Y)^*(\alpha)(\pi_Y)^*(\eta))=\alpha .$$
Thus we can view the class $\eta$ as determining a correspondence $[\eta]$ from $\mathfrak{X}$ to $X_0$, which induces a splitting
of the maps $\pi_*:G_*(X_0)\to G_*(\mathfrak{X})$.

Consider the diagram:
$$
\begin{CD}
X\times_\mathfrak{X}X @>q>> X \\
@VpVV                     @V\pi VV \\
X @>\pi >>   \mathfrak{X}
\end{CD}
$$
If $\mathcal{F}$ is a coherent sheaf on $X$, then by the projection formula:
$$\pi^*\pi_*([\mathcal{F}])= p_*(q^*(\alpha))[\mathcal{O}_X{}^L\otimes_{\mathcal{O}_\mathfrak{X}}\mathcal{O}_X],$$
and so $[\eta]\cdot\pi_*$ is induced by the correspondence
$\phi_*(\theta)q'^*(\eta)\in \ G_0(X\times_S X)$  where
$\theta=[\mathcal{O}_X{}^L\otimes_{\mathcal{O}_\mathfrak{X}}\mathcal{O}_X]$
where $\phi:X\times_\mathfrak{X}X\to X\times_S X$ is the natural map, and $q':X\times_S X\to X$ is projection onto the
second factor.  It is straightforward to check that $\phi_*(\theta)q'^*(\eta)$ is a
projector in the group of $K$-correspondences from $X$ to itself, and that
$(1_X\times \pi)_*(\phi_*(\theta)q'^*(\eta))=[\mathcal{O}_{\Gamma{\pi}}]\in \ G_0(X\times_S \mathfrak{X})$.
The proof now follows the argument  in \cite{Gillet-Soule-descent-motives}.
\end{proof}
\begin{cor}
The weight complex of a regular Deligne-Mumford stack proper over $S$ is isomorphic, in the
homotopy category of (homological) motives to a direct summand of the motive of a regular projective variety over $S$.
\end{cor}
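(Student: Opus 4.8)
The plan is to read off the corollary from Theorem \ref{weight-complex-is-exact} together with the standard fact that, in an idempotent-complete additive category, a bounded complex that is exact in positive degrees is homotopy equivalent to a complex concentrated in degree zero whose sole term is a direct summand of the term in degree zero of the original complex.

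First I would fix a non-singular proper hypercover $\pi:X.\to\mathfrak{X}$ with all $X_i$ regular and projective over $S$; such hypercovers exist by \cite{Gillet-Soule-Wt-Cplxs-Arith-Vartys}, and the resulting complex of $K_0$-motives $h(X.)$ is, as recalled before Theorem \ref{weight-complex-is-exact}, independent of the choice up to canonical homotopy equivalence. By Theorem \ref{weight-complex-is-exact} the complex $h(X.)$ is exact in positive degrees. Now the category of $K_0$-motives with rational coefficients is additive and pseudo-abelian (idempotents split, since an object is by construction a pair consisting of a regular projective scheme and an idempotent correspondence). The proof of Theorem \ref{weight-complex-is-exact} produces more than bare exactness: it produces the $K_0$-correspondences $h_i:X_i\to X_{i+1}$ for $i\geq -1$ (with $X_{-1}=\mathfrak{X}$) giving a contracting homotopy in positive degrees, together with the explicit projector $e=\phi_*(\theta)\,q'^*(\eta)\in G_0(X_0\times_S X_0)$ on $h(X_0)$, which satisfies $(1_{X_0}\times\pi)_*(e)=[\mathcal{O}_{\Gamma\pi}]$. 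Invoking the argument of \cite{Gillet-Soule-descent-motives}, I would conclude that $e$ is an idempotent endomorphism of $h(X_0)$, that its image $M:=\mathrm{im}(e)$ is a direct summand of $h(X_0)$, and that the augmentation $h(X.)\to M[0]$, with $M[0]$ the complex concentrated in degree zero with value $M$, is a homotopy equivalence of complexes of motives.

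Since the weight complex of $\mathfrak{X}$ is by definition the class of $h(X.)$ in the homotopy category of such complexes and is independent of the hypercover, this exhibits it as isomorphic to $M[0]$, hence as a direct summand of $h(X_0)$ with $X_0$ a regular projective variety over $S$, which is the assertion. The main obstacle will be the second step: checking that the contracting-homotopy data assembled in the proof of Theorem \ref{weight-complex-is-exact} genuinely define an idempotent whose image splits off $h(X_0)$, i.e. that the passage from "exact in positive degrees" to "homotopy equivalent to a complex concentrated in degree zero" goes through verbatim in the pseudo-abelian category of $K_0$-motives with $\mathbb{Q}$-coefficients; once the formalism of \cite{Gillet-Soule-descent-motives} is in place this is formal, so the real work is compatibility of the correspondences constructed there with that formalism. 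A minor point is that a reducible $X_0$ is, strictly speaking, a disjoint union of regular projective varieties rather than a single variety; if one insists on the wording one may dominate it by a connected regular projective $S$-scheme, or simply allow $h(X_0)$ to be a finite direct sum of motives of regular projective varieties.
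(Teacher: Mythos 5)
Your proposal is correct and follows exactly the route the paper intends: the corollary is stated without proof as an immediate consequence of Theorem \ref{weight-complex-is-exact}, and your argument --- exactness in positive degrees plus the explicit projector $\phi_*(\theta)q'^*(\eta)$ splitting off $\mathrm{H}_0$ of $h(X.)$ as a direct summand of $h(X_0)$ in the pseudo-abelian category of $K_0$-motives, via the formalism of \cite{Gillet-Soule-descent-motives} --- is precisely the intended one.
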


\begin{remark}
For stacks over perfect fields follows immediately from (indeed is essentially equivalent to)
the conclusion of Toen in \cite{Toen_motives_for_DM-Stacks}.
\end{remark}

\section{Arithmetic Chow groups of proper stacks}\label{ch-hat-proper-stacks}
Given a regular Deligne-Mumford stack proper over $S$, we define its arithmetic Chow groups in exactly the same fashion
as in \cite{Gillet-Soule-IHES-arith-intn-thy} and \cite{Burgos-JAG-Arith-chow-rings}.
\begin{defn}
We define $\widehat{Z}^p(\X)$ to be the group consisting of pairs
$(\zeta,g)$ where $\zeta$ is cycle on $\X$, and
$g\in D^{p-1,p-1}/(Im(\partial)+Im(\widehat{\partial}))(\X_\mathbb{R})$
is
a green current for $\zeta$ in the sense of section \ref{Dolbeault}.

Here $\X_\mathbb{R}$ indicates that we are taking forms, or currents,
on $\X({\mathbb{C}})$ on which the complex conjugation $F_\infty$ on the base arithmetic ring acts by the rule of
\cite{Gillet-Soule-IHES-arith-intn-thy} and \cite{Burgos-JAG-Arith-chow-rings}.

If $\eta\in\X$ is a point, the Zariski closure of which has
codimension $p$ in $\X$, and $f\in\mathbf{k}(\eta)$, then $f$ pulls
back to a rational function  $\tilde{f}$ on the inverse image of the
Zariski closure of $\eta$ on any \'etale cover $U\to\X$, and hence
determines a $(p-1,p-1)$-current $-\log(|\tilde{f}|^2)$ on $U(\C)$.
Since this construction is local in the analytic, and hence in the
\'etale topologies, we get a current $-\log(|{f}|^2)$ on $\X(\C)$,
and we define $\widehat{\CH}^p(\X)$ to be the quotient of
$\widehat{Z}^p(\X)$ by the subgroup generated by cycles of the form
$(\mathrm{div(}f),-\log(|{f}|^2))$.
\end{defn}
The definition above makes sense for stacks which are not proper over $S$.  However for such a stack it will give ``naive'' $\chat$ groups as in
\cite{Gillet-Soule-IHES-arith-intn-thy} rather than the groups of \cite{Burgos-JAG-Arith-chow-rings}. As remarked in section 1,
we do not know a definition of a forms ``with logarithmic growth'' on a stack over $\mathbb{C}$ which is not proper,
and so we do not know how to modify the definition above to give the ``non-naive'' groups.

The proof of the following proposition is exactly the same as in the case of arithmetic schemes.
\begin{prop}\label{stack-exact-seq-prop} If $\X$ is proper over $S$,
There is an exact sequence
\begin{multline}\label{stack-exact-seq}
\mathrm{CH}^{p,p-1}(\mathfrak{X})\to H^{2p-1}_\mathcal{D}(\mathfrak{X}_\mathbb{R},\mathbb{R}(p))\\
\to\widehat{\mathrm{CH}}^p(\mathfrak{X})\to \mathrm{CH}^p(\mathfrak{X})\oplus{Z}^{p,p}(\mathfrak{X}_\mathbb{R})
\to\mathrm{H}^{2p}(\mathfrak{X}_\mathbb{R},\mathbb{R}(p))
\end{multline}
\end{prop}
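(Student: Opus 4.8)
The plan is to reduce the statement, as the author suggests, to the corresponding exact sequences that are already known for regular arithmetic \emph{schemes}, transported along a non-singular proper hypercover, together with the Dolbeault and cohomological descent results established in Section~\ref{Dolbeault} and Section~\ref{G-K-stacks}. Concretely, I would first recall that for a regular scheme $X$ flat and proper over $S$ the sequence
\begin{multline*}
\mathrm{CH}^{p,p-1}(X)\to H^{2p-1}_\mathcal{D}(X_\mathbb{R},\mathbb{R}(p))\\
\to\widehat{\mathrm{CH}}^p(X)\to \mathrm{CH}^p(X)\oplus Z^{p,p}(X_\mathbb{R})\to H^{2p}(X_\mathbb{R},\mathbb{R}(p))
\end{multline*}
is exact by \cite{Gillet-Soule-IHES-arith-intn-thy}; the point is that each of the five terms, and each of the four maps, makes sense verbatim for $\mathfrak{X}$, because: (i) cycles and rational equivalence on $\mathfrak{X}$ were defined in Section~\ref{G-K-stacks}; (ii) $\mathrm{CH}^{p,p-1}(\mathfrak{X})$ was defined there as the homology of the Gersten-type complex; (iii) the sheaves $\mathcal{A}^{*,*}_{\mathfrak{X}}$ and $\mathcal{D}^{*,*}_{\mathfrak{X}}$ are acyclic (Proposition~\ref{acyclic}) and compute $H^*(\mathfrak{X},\mathbb{C})$, the Hodge spectral sequence degenerates, and the $\partial\overline{\partial}$-lemma holds, so the real Deligne cohomology $H^*_\mathcal{D}(\mathfrak{X}_\mathbb{R},\mathbb{R}(p))$ is computed by the Burgos-type Deligne complex attached to the global Dolbeault complex $A^{*,*}(\mathfrak{X}(\mathbb{C}))$; and (iv) the construction $\eta\mapsto -\log|f|^2$ of the Green current of a principal divisor, and the construction of a Green current for an arbitrary cycle using the $\partial\overline{\partial}$-lemma, were both carried out for $\mathfrak{X}$ in Section~\ref{Dolbeault} and in the definition of $\widehat{\mathrm{CH}}^p(\mathfrak{X})$ above.

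With this dictionary in place, the proof is a diagram chase that is formally identical to the scheme case. I would go through the five positions in turn. Exactness at $Z^{p,p}(\mathfrak{X}_\mathbb{R})$ together with $\mathrm{CH}^p(\mathfrak{X})$: an element $(\zeta,\omega)$ lifts to $\widehat{\mathrm{CH}}^p(\mathfrak{X})$ iff $\zeta$ is rationally trivial and $\omega$ is cohomologous, in real Deligne cohomology, to the cycle class of $\zeta$ — and the fact that $\delta_\zeta$ represents $[\zeta]$ in $H^{2p}(\mathfrak{X},\mathbb{R}(p))$ is exactly the statement established in Section~\ref{Dolbeault} via the local-to-global spectral sequence with supports in $|\zeta|$. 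Exactness at $\widehat{\mathrm{CH}}^p(\mathfrak{X})$: a class maps to $0$ in $\mathrm{CH}^p\oplus Z^{p,p}$ iff it is represented by $(0,g)$ with $dd^c g = 0$, i.e. $g$ is a Deligne cohomology class, and the ambiguity in $g$ is exactly the image of the map from $\mathrm{CH}^{p,p-1}$ — this uses that the only cycles giving the trivial class are the $(\mathrm{div}(f),-\log|f|^2)$, which is the definition. Exactness at $H^{2p-1}_\mathcal{D}$: the kernel of $H^{2p-1}_\mathcal{D}(\mathfrak{X}_\mathbb{R},\mathbb{R}(p))\to\widehat{\mathrm{CH}}^p(\mathfrak{X})$ consists of those classes represented by $dd^c$ of nothing but a combination of $-\log|f|^2$'s, which is precisely the image of $\mathrm{CH}^{p,p-1}(\mathfrak{X})\to H^{2p-1}_\mathcal{D}$ defined by the regulator $\rho$; here one invokes the description of the $\rho$-map exactly as in section~3.3.5 and section~3.5 of \cite{Gillet-Soule-IHES-arith-intn-thy}, which only uses rational functions on integral substacks and the corresponding $-\log|f|^2$ currents, all of which are available on $\mathfrak{X}$.

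The one place where a genuine verification (rather than a transcription) is needed — and hence the step I expect to be the main obstacle — is checking that the real Deligne cohomology groups $H^{*}_\mathcal{D}(\mathfrak{X}_\mathbb{R},\mathbb{R}(p))$ appearing in the sequence really do have all the properties used in the scheme-level argument: that they are computed by Burgos's Deligne complex built from $A^{*,*}(\mathfrak{X}(\mathbb{C}))$ with the involution $F_\infty$, that this complex receives the cycle class map and the regulator $\rho$ with the expected images, and that the formalism of \cite{Burgos-Duke} applies. For $\mathfrak{X}$ proper over $\mathbb{C}$ this was indicated at the end of Section~\ref{Dolbeault}: the Dolbeault complex of $\mathfrak{X}(\mathbb{C})$ satisfies the $\partial\overline{\partial}$-lemma and carries a strong Hodge decomposition, which is exactly the input Burgos's construction requires, so the machinery goes through unchanged. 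Once that is granted, the five exactness statements follow by the same arguments as for arithmetic schemes, and I would simply say so, pointing to \cite{Gillet-Soule-IHES-arith-intn-thy} and \cite{Burgos-JAG-Arith-chow-rings} for the details of each chase.
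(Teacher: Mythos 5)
Your proposal is correct and follows essentially the same route as the paper, which simply states that the proof is ``exactly the same as in the case of arithmetic schemes'': all the stack-level ingredients (cycles and $\mathrm{CH}^{p,p-1}$, acyclicity of $\mathcal{A}^{*,*}_{\mathfrak{X}}$ and $\mathcal{D}^{*,*}_{\mathfrak{X}}$, the $\partial\overline{\partial}$-lemma, the representation of $[\zeta]$ by $\delta_\zeta$, the $-\log|f|^2$ currents, and the Burgos-type computation of $H^*_\mathcal{D}(\mathfrak{X}_\mathbb{R},\mathbb{R}(p))$ for $\mathfrak{X}$ proper) are exactly the points the paper establishes in Sections~\ref{Dolbeault} and \ref{G-K-stacks} so that the scheme-level diagram chase of \cite{Gillet-Soule-IHES-arith-intn-thy} transports verbatim. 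Your identification of the Deligne-cohomology verification as the only non-formal step matches the remark at the end of Section~\ref{Dolbeault}.
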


On the other hand, if $\X$ is proper over $S$, it follows from theorem \ref{weight-complex-is-exact} and
 the discussion in section \ref{Dolbeault}, in particular
lemma \ref{inverse-limit-of-forms} that
 we have an exact sequence:
\begin{multline}\label{inv-limit-exact-seq}
\mathrm{CH}^{p,p-1}(\mathfrak{X})_\Q\to H^{2p-1}_\mathcal{D}(\mathfrak{X}_\mathbb{R},\mathbb{R}(p))\\
\to\lim_{\overset{\longleftarrow}{f:X\to\mathfrak{X}} }\widehat{\mathrm{CH}}^p({X})_\Q
\to \mathrm{CH}^p(\mathfrak{X})_\Q\oplus{Z}^{p,p}(\mathfrak{X}_\mathbb{R})
\to\mathrm{H}^{2p}(\mathfrak{X}_\mathbb{R},\mathbb{R}(p))
\end{multline}
where the inverse limit is over all proper surjective maps to $f:X\to\mathfrak{X}$ with nonsingular domain.

\begin{thm}
There is a canonical isomorphism
$$
\lim_{\overset{\longleftarrow}{f:X\to\mathfrak{X}} }\widehat{\mathrm{CH}}^p({X})_\Q
\to \widehat{\mathrm{CH}}^p(\mathfrak{X})_\Q  .
$$
\end{thm}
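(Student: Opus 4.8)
===PROOF PROPOSAL===

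The plan is to compare the two exact sequences \eqref{stack-exact-seq} (with $\Q$-coefficients) and \eqref{inv-limit-exact-seq} via the five lemma. Both sequences have been established; their outer four terms are \emph{literally the same}: the Chow groups $\CH^{p,p-1}(\X)_\Q$ and $\CH^p(\X)_\Q$, the Deligne cohomology $H^{2p-1}_\mathcal D(\X_\R,\R(p))$ and $H^{2p}_\mathcal D(\X_\R,\R(p))$, and the group of forms $Z^{p,p}(\X_\R)$, do not change when we pass to the inverse limit. So once I produce a map of complexes from \eqref{inv-limit-exact-seq} to \eqref{stack-exact-seq} which is the identity on the four outer terms, the five lemma forces the middle arrow $\varinjlim_f \chat^p(X)_\Q \to \chat^p(\X)_\Q$ to be an isomorphism.

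First I would construct the map itself. A class in $\varprojlim_f \chat^p(X)_\Q$ is a compatible family $(x_f)$ with $x_f \in \chat^p(X)_\Q$; I need to produce from it a well-defined class in $\chat^p(\X)_\Q$. For the cycle part, use that $Z^p(X) \to Z^p(\X)_\Q$ via pushforward, divided by degree, is compatible across the tower — more conceptually, $\CH^p(\X)_\Q \xrightarrow{\sim} \varprojlim_f \CH^p(X)_\Q$ by Theorem \ref{weight-complex-is-exact} (this is the Chow-group analogue the introduction promised). For the Green current part, use Lemma \ref{inverse-limit-of-forms} together with its evident extension to currents: a compatible family of Green currents on the $X(\C)$ descends to a current on $\X(\C)$, because a surjective proper generically finite map between smooth stacks has an injective pullback on currents (by the push-pull computation of Lemma \ref{pull-back-push-for-forms}, $\tfrac1d f_* f^* = \mathrm{id}$). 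One checks the defining equation $dd^c g + \delta_\zeta = \omega$ descends since all three terms do. Divisors of rational functions likewise match up, so the map factors through $\chat^p$.

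Then I would verify commutativity of the resulting ladder square by square. The arrow $\chat^p \to \CH^p \oplus Z^{p,p}$ is "forget to the cycle and take the curvature form"; this commutes with the comparison map essentially by construction, since both the cycle class in $\CH^p(\X)_\Q = \varprojlim \CH^p(X)_\Q$ and the curvature form in $Z^{p,p}(\X_\R) = \varprojlim Z^{p,p}(X_\R)$ (Lemma \ref{inverse-limit-of-forms}) are defined compatibly. The arrow $H^{2p-1}_\mathcal D \to \chat^p$ lands in the image of Green currents that are forms, and compatibility here is the statement that the Deligne cohomology of $\X$ injects compatibly into that of each $X$ (Corollary \ref{Coh-splits}, which gives split injections on cohomology respecting the Hodge filtration, hence on the Deligne complexes of Burgos). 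The arrow $\CH^{p,p-1} \to H^{2p-1}_\mathcal D$ is intrinsic to $\X$ and needs no checking.

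The main obstacle is the surjectivity/well-definedness of the comparison map on the \emph{Green current} level, i.e. showing a compatible family of currents $g_f \in D^{p-1,p-1}(X_\R)/(\mathrm{im}\,\partial + \mathrm{im}\,\overline\partial)$ descends to a single current on $\X_\R$ — this is the analogue for currents of Lemma \ref{inverse-limit-of-forms}, and while the injectivity half is immediate from $\tfrac1d f_* f^* = \mathrm{id}$, the surjectivity half requires rerunning the Zariski's-main-theorem-plus-resolution argument of that lemma's proof for currents with distribution coefficients, checking that $f_*$ on currents is compatible with restriction to étale opens and that the independence-of-factorization argument still goes through (the pushforward of currents along a proper generically finite map is already set up in section \ref{Dolbeault}). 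Once that extension is in hand, the five lemma finishes everything, and the diagram-chase is routine.
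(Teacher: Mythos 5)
Your overall frame --- compare the two exact sequences and invoke the five lemma --- is exactly the paper's, and your treatment of the four outer terms is fine. The gap is in the construction of the comparison map on the middle term. You propose to \emph{descend} the compatible family of Green currents $(g_f)$ to a single current on $\X(\C)$ via a ``currents analogue'' of Lemma \ref{inverse-limit-of-forms}, which you flag as the main obstacle but do not carry out; the trouble is that this is not merely a technical extension to be rerun, it is not even a well-posed descent problem. The $g_f$ are not a compatible system of actual currents: each is only defined modulo $\mathrm{im}(\partial)+\mathrm{im}(\overline{\partial})$ and modulo the relations $(\mathrm{div}(h),-\log|h|^2)$, and the transition maps in the tower are the Gillet--Soul\'e pullbacks on $\chat^p$, which on the level of Green currents involve choices (moving to proper intersection, the $*$-product); so there is no literal inverse system of currents to which a sheaf-descent or Zariski's-main-theorem argument could apply. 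The injectivity statement $\frac{1}{d}f_*f^*=\mathrm{id}$ from Lemma \ref{pull-back-push-for-forms} does not rescue this, because you never have canonical current representatives to compare.

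The paper's construction avoids the issue entirely: fix \emph{one} generically finite proper surjective $f:X\to\X$ and push forward, setting the image of $\alpha$ to be $\frac{1}{\deg(f)}f_*(\alpha_f)$. A priori $f_*$ lands in an enlarged group $\widetilde{\CH^p(\X)}$ in which $dd^c(g)+\delta_\zeta$ is only required to be a closed current; the role of Lemma \ref{inverse-limit-of-forms} is then precisely to show that the compatible family of curvature forms $\omega_f$ satisfies $\omega_f=f^*\omega$ for a single smooth $\omega$ on $\X$, so that $\frac{1}{\deg(f)}f_*(\omega_f)=\omega$ is $C^\infty$ and the class lies in the honest $\chat^p(\X)_\Q$. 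Independence of the choice of $f$ is then proved by dominating two choices $f,f'$ by a third generically finite $f''$ and applying the projection formula --- no descent of currents is ever needed. So: same skeleton, but your middle arrow needs to be rebuilt as a normalized pushforward rather than a descent, and as written the key step of your argument would not go through.
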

\begin{proof}
Since all but one term of the exact sequences \ref{stack-exact-seq}
and \ref{inv-limit-exact-seq} coincide, we have simply
to produce a map between the sequences.

In general if we are given a generically finite map ${f:X\to\mathfrak{X}}$, then we have push forward maps on cycles and currents which give
a push forward from $f_*:\chat^p(X)\to \widetilde{\CH^p(\X)}$, where $\widetilde{\CH^p(\X)}$ is defined in exactly the same way as ${\chat^p(\X)}$
\emph{except} that we do not require the form $dd^c(g)+\delta_\zeta=\omega$ be $C^\infty$. It is straight forward to check that this push
forward map gives a map  from the first exact sequence of proposition \ref{stack-exact-seq-prop} for $X$ to the corresponding exact
sequence for $\X$ in which $\chat^p(\X)$ is replaced
by $\widetilde{\CH^p(\X)}$  and $Z^{p,p}(\X)$ is replaced by the space of closed $(p,p)$-currents.

Suppose now that
$\alpha\in {\underset{f:X\to\mathfrak{X}}{\mathrm{lim}}}{\widehat{\mathrm{CH}}^q({X})}_\Q$.
Then for each
${f:X\to\mathfrak{X}}$, we have a class $\alpha_f$ which, by lemma \ref{inverse-limit-of-forms}, lies in
the subgroup of $\widehat{\mathrm{CH}}^p({X})_\Q$ mapping to
$f^*Z^{(p,p)}(\X(\C))$. Therefore if $f:X\to\X$ is generically
finite, since any $(p-1,p-1)$-current on $X$ pushes forward to a
$(p-1,p-1)$-current on $\X$, we can push forward
$\frac{1}{\deg(f)}\alpha_p$ to a class
$\frac{1}{\deg(f)}f_*(\alpha_f)$  in
$\widehat{\mathrm{CH}}^p({\X})_\Q$.

Given generically finite maps ${f:X\to\mathfrak{X}}$ and
${f':X'\to\mathfrak{X}}$, we can dominate both maps by a map
${f'':X''\to\mathfrak{X}}$ with $X''$ again  regular and $f''$
generically finite. Now applying the projection formula, we get
that $\frac{1}{\deg(f)}f_*(\alpha_f)$ is independent of $f$. Furthermore we now have a map  from the first exact
sequence of proposition \ref{stack-exact-seq-prop} for $X$ to the corresponding exact
sequence for $\X$.
\end{proof}

Since any map between stacks can be dominated by a map between non-singular hypercovers,
$\widehat{\mathrm{CH}}^*({\X})_\Q$ becomes a contravariant functor from stacks over $S$ to graded $\Q$-algebras.

If $\overline{E}=(E,\|\;\|)$ is a Hermitian vector bundle on $\X$, then $\overline{E}$  pulls back by any proper
surjective map $p:X\to\X$ to a Hermitian vector bundle on $X$, and the Chern classes $\widehat{C}_p(\overline{E})$
of \cite{Gillet-Soule-Annals}
define a class in
${\underset{p:X\to\mathfrak{X}}{\mathrm{lim}} }{\widehat{\mathrm{CH}}^p({X})}$,
and therefore in $\widehat{\mathrm{CH}}^p({\X})_\Q$, satisfying the properties of \emph{op. cit.}

\section{Arithmetic Chow groups of non-proper stacks}
The construction of the previous section assumed that the stack $\mathfrak{X}$ was proper over $S$.
Nonetheless we can make the following
\begin{defn}
If $\X$ is a regular, separated, Deligne-Mumford stack over $S$, we define the groups $\chat^*(\X)_\Q$ to be the inverse limit
of $\chat^*(X)_\Q$ over all proper surjective maps $\pi:X\to\mathfrak{X}$ with $X$ nonsingular.
\end{defn}
Note that this is equivalent to the inverse limit
over the category of non-singular proper hypercovers $\pi:V.\to \X$:
$$\chat^p(\X)_\Q:=\underset{{\pi:V.\to\X}}{\lim_{\longleftarrow}} \mathrm{H}^0(i\mapsto \chat^p(V_i)_\Q) .$$

To approach the basic properties of these groups, rather than working with the category of motives over $S$,
we shall introduce the category of (homological) motives
over $\mathfrak{X}$.

\subsection{Motives over a stack}
Let $\mathfrak X$ be a separated, regular Deligne-Mumford stack over
Spec$(\mathbb{Z})$. The category $\mathcal{V}ar_\mathfrak{X}$ of
varieties over $\mathfrak X$ has finite products and fibred
products. If $\alpha:X\to \X$, $\beta:Y\to\X$ and $\gamma:Z\to\X$
are objects, and $f:(X,\alpha)\to (Z,\gamma)$ and $g:(Y,\beta)\to
(Z,\gamma)$ are morphisms, in $\mathcal{V}ar_\mathfrak{X}$ we have
that the fibred product $X\times_ZY$ is the same whether we view
$f:X\to Z$ and $g:Y\to Z$ as morphisms, in
$\mathcal{V}ar_\mathfrak{X}$, or as morphisms in $\mathcal{V}ar_S$.
In particular if $f:X\to \mathfrak{X} $, $g:Y\to \mathfrak{X}$,
$h:Z\to \mathfrak{X}$ are objects in $\mathcal{V}ar_\mathfrak{X}$,
we have that
$$X\times_\mathfrak{X} Y\times_\mathfrak{X} Z\simeq (X\times_\mathfrak{X} Y)\times_Y(Y\times_\mathfrak{X}) Z.$$
Following the method of \cite{Gillet-Soule-Wt-Cplxs-Arith-Vartys},
we define the category of (homological) $K_0$-motives over
$\mathfrak{X}$ to be the idempotent completion of the category with
objects the  regular varieties which are projective over $\mathfrak{X}$,
and
$$\mathrm{KC}_\mathfrak{X}(X,Y):= G_0(X\times_\mathfrak{X}Y),$$
where $G_0$
is the Grothendieck group of coherent sheaves of
$\mathcal{O}_{X\times_\mathfrak{X}Y}$-modules (which as always we take with $\Q$-coefficients).
Note that $X \times_\X Y$ is not in general regular; however since $Y$ is regular,
${\mathcal O}_Y$ is of finite global tor-dimension, and hence
there is a bilinear product, given  $X$, $Y$ and $Z$ regular, projective, $\X$-varieties :
$$
* : G_0 (X \times_\X Y) \times G_0 (Y \times_\X Z) \to G_0 (X \times_\X Y \times_\X Z)
$$
$$
([{\mathfrak F}], [{\mathfrak G}]) \mapsto \sum_{i \geq 0} (-1)^i [ {\mathcal T}\!\!or_i^{{\mathcal O}_Y} ({\mathfrak F} , {\mathfrak G})] \, .
$$
Composing with the direct image map ($p_{XZ} : X \times_\X Y \times_Z Z \to X \times_\X Z$ being the natural projection):
$$
(p_{XZ})_* : G_0 (X \times_\X Y \times_\X Z) \to G_0 (X \times_Y Z)
$$
we get a bilinear pairing:
$$
G_0 (X \times_\X Y) \times G_0 (Y \times_\X Z) \to G_0 (X \times_\X Z)
$$
and hence:
$$
\mathrm{KC}_\X (X,Y) \times \mathrm{KC}_\X (Y,Z) \to \mathrm{KC}_\X (X,Z) \, .
$$
The proofs of the following theorems are straightforward, following the pattern in
\cite{Gillet-Soule-Wt-Cplxs-Arith-Vartys} and of the previous sections and
so we omit them.

\begin{lemma}
Given regular varieties $X,Y,Z$ and $W$ projective over $\X$, and elements
$\alpha \in \mathrm{KC}_\X (X,Y)$, $\beta \in \mathrm{KC}_\X (Y,Z)$, $\gamma \in \mathrm{KC}_\X (Z,W)$ we have
$$
\gamma \circ (\beta \circ \alpha) = (\gamma \circ \beta) \circ \alpha \, .
$$
\end{lemma}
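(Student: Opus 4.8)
The plan is to show that both $\gamma\circ(\beta\circ\alpha)$ and $(\gamma\circ\beta)\circ\alpha$ coincide with a single ``triple composite'' supported on the fourfold fibre product
$$
T\;:=\;X\times_\X Y\times_\X Z\times_\X W,
$$
namely the class $(p_{XW})_*\big(\alpha*_Y\beta*_Z\gamma\big)\in\mathrm{KC}_\X(X,W)=G_0(X\times_\X W)$, where $p_{XW}\colon T\to X\times_\X W$ is the projection and $\alpha*_Y\beta*_Z\gamma$ denotes the iterated derived tensor product $\widetilde\alpha\otimes^{L}_{\mathcal O_Y}\widetilde\beta\otimes^{L}_{\mathcal O_Z}\widetilde\gamma$ of the pull-backs $\widetilde\alpha,\widetilde\beta,\widetilde\gamma$ of $\alpha,\beta,\gamma$ to $T$, written out, exactly as in the definition of $*$, as an alternating sum of $\mathcal T\!or$-sheaves. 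Just as for $*$ itself, this is a well defined element of $G_0(T)$ precisely because $Y$, $Z$ and $\X$ are regular, so that all the morphisms in sight have finite Tor-dimension; note also that although $X\times_\X Y$, $T$ and the other fibre products occurring are in general only algebraic spaces, the formalism of rational $G$-theory (push-forward, projection formula, base change) extends to them exactly as for schemes.

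First I would record the combinatorics of fibre products that make this work. Exactly as in the identity $X\times_\X Y\times_\X Z\simeq(X\times_\X Y)\times_Y(Y\times_\X Z)$ quoted above, one has
$$
T\;\simeq\;(X\times_\X Y\times_\X Z)\times_{X\times_\X Z}(X\times_\X Z\times_\X W)\;\simeq\;(X\times_\X Y\times_\X W)\times_{Y\times_\X W}(Y\times_\X Z\times_\X W),
$$
so that $T$ is simultaneously a base change of the projection $X\times_\X Y\times_\X Z\to X\times_\X Z$ and, symmetrically, a base change of the projection $Y\times_\X Z\times_\X W\to Y\times_\X W$. Because the varieties in question are regular over the regular base $\X$, these Cartesian squares are of the type for which proper push-forward commutes with (derived) base change and the projection formula holds, exactly as in the classical situation of \cite{SGA-6}; this is the only place where the regularity hypotheses are used beyond the well-definedness of $*$.

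The reduction is then formal, following the pattern of \cite{Gillet-Soule-Wt-Cplxs-Arith-Vartys} (and ultimately of \cite{SGA-6}). Consider $\gamma\circ(\beta\circ\alpha)$. By definition $\beta\circ\alpha=(p_{XZ})_*(\alpha*_Y\beta)$; pulling back along $X\times_\X Z\times_\X W\to X\times_\X Z$ and applying the base-change isomorphism for the first Cartesian square, the pull-back of $\beta\circ\alpha$ to $X\times_\X Z\times_\X W$ equals the push-forward along $T\to X\times_\X Z\times_\X W$ of the pull-back of $\alpha*_Y\beta$ to $T$. Tensoring with $\gamma$ over $\mathcal O_Z$ and using the projection formula, $(\beta\circ\alpha)*_Z\gamma$ is the push-forward along $T\to X\times_\X Z\times_\X W$ of $(\alpha*_Y\beta)*_Z\gamma\in G_0(T)$; pushing further to $X\times_\X W$ and using that the composite $T\to X\times_\X Z\times_\X W\to X\times_\X W$ is $p_{XW}$, we obtain $\gamma\circ(\beta\circ\alpha)=(p_{XW})_*\big((\alpha*_Y\beta)*_Z\gamma\big)$. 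The mirror-image computation, interchanging the roles of $Y$ and $Z$ and using the second Cartesian square, gives $(\gamma\circ\beta)\circ\alpha=(p_{XW})_*\big(\alpha*_Y(\beta*_Z\gamma)\big)$. Finally, associativity of the derived tensor product — which, in terms of the $\mathcal T\!or$-alternating sums, is the collapse of the usual base-change spectral sequences for $\mathcal T\!or$ into a single equality in $G_0(T)$ — yields $(\alpha*_Y\beta)*_Z\gamma=\alpha*_Y(\beta*_Z\gamma)$, whence the two sides of the lemma agree.

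The main obstacle is the bookkeeping hidden in the two middle steps: checking that the base change for the (non-flat) Cartesian squares above genuinely holds at the level of $G_0$, that the projection formula applies with $\gamma\in G_0$ tensored over the \emph{regular} $\mathcal O_Z$, and that the naive $\mathcal T\!or$-alternating-sum definition of $*$ is compatible with derived pull-back and with associativity of $\otimes^{L}$. All of this is the stack analogue of the standard computations in \cite{SGA-6} and \cite{Gillet-Soule-Wt-Cplxs-Arith-Vartys}, and the only genuinely new inputs are the two facts recalled above: that the fibre product of two $\X$-varieties over $\X$ agrees with their fibre product over $S$ (so the displayed identities are the familiar scheme identities), and that rational $G$-theory behaves for the algebraic spaces occurring here exactly as it does for schemes.
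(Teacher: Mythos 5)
Your overall strategy --- reduce both triple composites to $(p_{XW})_*$ of a single triple product on $T=X\times_\X Y\times_\X Z\times_\X W$, via the two factorizations of $T$ over $X\times_\X Z$ and over $Y\times_\X W$, and then invoke associativity of the iterated derived tensor product --- is exactly the standard argument the paper has in mind. Note that the paper omits the proof of this lemma altogether, deferring to ``the pattern'' of \cite{SGA-6} and \cite{Gillet-Soule-Wt-Cplxs-Arith-Vartys}, so there is no written proof to compare against line by line; your skeleton is the intended one, and your two displayed identifications of $T$ as a fibre product are correct.

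There is, however, one step whose justification as written does not hold up, and you yourself flag it as the main obstacle. You pass from $\beta\circ\alpha=(p_{XZ})_*(\alpha*_Y\beta)$ to its pull-back to $X\times_\X Z\times_\X W$ by invoking derived base change for the Cartesian square with corners $T$, $X\times_\X Y\times_\X Z$, $X\times_\X Z\times_\X W$, $X\times_\X Z$, asserting that this is legitimate ``because the varieties in question are regular over the regular base $\X$''. That justification is not valid: derived base change $Lq^*Rp_*\simeq Rp'_*Lq'^*$ requires the square to be Tor-independent, and regularity of $X,Y,Z,W$ and $\X$ does not make $X\times_\X Y\times_\X Z$ and $X\times_\X Z\times_\X W$ Tor-independent over $X\times_\X Z$; the sheaves $\mathcal{T}\!or_i^{\mathcal{O}_{X\times_\X Z}}\bigl(\mathcal{O}_{X\times_\X Y\times_\X Z},\mathcal{O}_{X\times_\X Z\times_\X W}\bigr)$ need not vanish for $i>0$. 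The repair is that no base change is needed at all. Since $*_Z$ is by definition the derived tensor product over $\mathcal{O}_Z$ (and \emph{not} a derived pull-back to the underived fibre product followed by a tensor over its full structure sheaf --- the two differ precisely when Tor-independence fails), the identity $\bigl((p_{XZ})_*M\bigr)*_Z\gamma=(p_{XZW})_*\bigl(M*_Z\gamma\bigr)$ is a direct instance of the projection formula for the proper map $T\to X\times_\X Z\times_\X W$ applied to $-\otimes^L_{\mathcal{O}_Z}\gamma$; in the affine model it is the tautology that $M\otimes^L_{\mathcal{O}_Z}\gamma$ has the same underlying complex whether one remembers or forgets the $\mathcal{O}_Y$-module structure. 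The only hypothesis needed is that $\gamma$ have finite Tor-dimension over $\mathcal{O}_Z$, which is exactly where the regularity of $Z$ (and, symmetrically, of $Y$ on the other side) enters. With the base-change step replaced by this projection-formula argument, your proof is correct and coincides with the one the paper refers to.
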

Note that just as in the case of varieties over a scheme, if $\alpha:X\to \X$, $\beta:Y\to\X$ and $f:X\to Y$ is a morphism in
$\mathcal{V}ar_\mathfrak{X}$, then the graph of $f$ is a closed subscheme of $X\times_\X Y$ which is isomorphic to $X$,
and its structure sheaf defines a class $\Gamma(f)$ in $\mathrm{KC}_\X(X,Y)$.
and that it is straightforward to check:
\begin{lemma}
$$f\mapsto \Gamma(f)$$
defines a covariant functor $\Gamma:\mathcal{V}ar_\mathfrak{X}\to \KC_\X$.
\end{lemma}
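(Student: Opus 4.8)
The plan is to check the two functoriality conditions directly, following the pattern of \cite{Gillet-Soule-Wt-Cplxs-Arith-Vartys}. The identity morphism of an object $\alpha:X\to\X$ in $\KC_\X$ is, by the construction of that category, the class $[\mathcal{O}_{\Delta_X}]\in G_0(X\times_\X X)$ of the structure sheaf of the diagonal $\Delta_X\subset X\times_\X X$; that this class is a two-sided unit for $\circ$ is verified exactly as in the associativity lemma above. Since the graph of $\mathrm{id}_X:X\to X$ is precisely $\Delta_X$, we get $\Gamma(\mathrm{id}_X)=[\mathcal{O}_{\Delta_X}]=\mathrm{id}_X$ at once, so the only thing to prove is the composition formula $\Gamma(g)\circ\Gamma(f)=\Gamma(g\circ f)$ for morphisms $f:X\to Y$, $g:Y\to Z$ in $\mathcal{V}ar_\X$.

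Write $\Gamma_f\subset X\times_\X Y$ and $\Gamma_g\subset Y\times_\X Z$ for the graphs, so $\Gamma(f)=[\mathcal{O}_{\Gamma_f}]$ and $\Gamma(g)=[\mathcal{O}_{\Gamma_g}]$, and set $P:=X\times_\X Y\times_\X Z\simeq(X\times_\X Y)\times_Y(Y\times_\X Z)$, using the associativity isomorphism recorded before the lemma. First I would compute the $*$-product $[\mathcal{O}_{\Gamma_f}]*[\mathcal{O}_{\Gamma_g}]\in G_0(P)$, where the ${\mathcal T}\!or$ is taken over $\mathcal{O}_Y$ (legitimate since $Y$ is regular, hence $\mathcal{O}_Y$ is of finite $\mathrm{Tor}$-dimension). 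The key observation is that the first projection $\Gamma_g\to Y$ is an \emph{isomorphism} (with inverse $(\mathrm{id}_Y,g)$), so $\mathcal{O}_{\Gamma_g}$ is flat as an $\mathcal{O}_Y$-module; therefore $\mathcal{O}_{\Gamma_f}\otimes^L_{\mathcal{O}_Y}\mathcal{O}_{\Gamma_g}=\mathcal{O}_{\Gamma_f}\otimes_{\mathcal{O}_Y}\mathcal{O}_{\Gamma_g}$ with no higher $\mathcal T\!or$, and this is the structure sheaf of the fibre product $W:=\Gamma_f\times_Y\Gamma_g\subset P$. Scheme-theoretically $W\simeq\Gamma_f\times_Y Y\simeq\Gamma_f\simeq X$, and set-theoretically $W=\{(x,f(x),g(f(x)))\}$; hence $[\mathcal{O}_{\Gamma_f}]*[\mathcal{O}_{\Gamma_g}]=[\mathcal{O}_W]$ in $G_0(P)$.

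Next I would push $[\mathcal{O}_W]$ forward along the (proper) projection $p_{XZ}:P\to X\times_\X Z$. Restricted to $W\simeq X$ the map $p_{XZ}$ is $x\mapsto(x,g(f(x)))$, i.e. it is the closed immersion $(\mathrm{id}_X,g\circ f):X\hookrightarrow X\times_\X Z$ identifying $W$ with the graph $\Gamma_{g\circ f}$; in particular $p_{XZ}|_W$ is an isomorphism onto its image, so $(p_{XZ})_*\mathcal{O}_W=\mathcal{O}_{\Gamma_{g\circ f}}$ with vanishing higher direct images. Combining the two steps,
$$\Gamma(g)\circ\Gamma(f)=(p_{XZ})_{*}\big([\mathcal{O}_{\Gamma_f}]*[\mathcal{O}_{\Gamma_g}]\big)=(p_{XZ})_{*}[\mathcal{O}_W]=[\mathcal{O}_{\Gamma_{g\circ f}}]=\Gamma(g\circ f),$$
which is the required functoriality.

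The step I expect to require the most care is the $*$-product computation in the second paragraph: one must keep careful track of which of the several projections out of $P$ supplies each $\mathcal{O}_Y$- and $\mathcal{O}_P$-module structure, and one must justify identifying the derived tensor product with the structure sheaf of the honest fibre product $\Gamma_f\times_Y\Gamma_g$ — that is, the relevant $\mathrm{Tor}$-independence — using that $\Gamma_g\to Y$ is an isomorphism and hence flat. Note that none of the schemes $X\times_\X Y$, $P$, $W$ need be regular, so the whole argument is necessarily phrased in terms of $G_0$, the $*$-product, and proper push-forward rather than cycle-theoretic intersection, exactly as in \cite{Gillet-Soule-Wt-Cplxs-Arith-Vartys}; the regularity of $Y$, already used above to make the $*$-product defined, is what keeps everything bounded.
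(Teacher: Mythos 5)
Your proof is correct, and it fills in exactly the argument the paper leaves implicit (the paper omits the proof as ``straightforward to check,'' following the pattern of the weight-complexes paper): the identity is the class of the diagonal, and the composition formula reduces to the flatness of $\mathcal{O}_{\Gamma_g}$ over $\mathcal{O}_Y$ via the first projection, which kills the higher $\mathcal{T}\!or$'s, together with the vanishing of higher direct images under $p_{XZ}$ restricted to $W\simeq\Gamma_{g\circ f}$. The points you flag as delicate --- which projection supplies the $\mathcal{O}_Y$-module structure, and the need to work in $G_0$ because the fibre products need not be regular --- are precisely the right ones, and your handling of them is correct.
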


\begin{prop}
The functors from regular varieties projective over $\mathfrak X$ to rational vector spaces, $G_i$ (Quillen $K$-theory of coherent sheaves,
with rational coefficients),
the associated graded for the $\gamma$-filtration on $G_0$, \emph{i.e} $\CH^*_\Q$, and real Deligne cohomology $X\mapsto \mathrm{H}^*_\mathcal{D}(X(\mathbb{C}),\mathbb{R}(*))$ all factor through $\KC_\X$.
\end{prop}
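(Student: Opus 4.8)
The plan is, for each functor $F$ in the list, to promote the assignment $f\mapsto\Gamma(f)$ of the previous lemma to an action of \emph{all} correspondences, and then to invoke the elementary fact that a $\Q$-linear functor on $\mathcal{V}ar_\X$ whose extension to correspondences is compatible with composition factors (uniquely, and over the idempotent completion as well) through $\KC_\X$. Concretely, given regular $\X$-projective varieties $X,Y$ and a class $\alpha\in\KC_\X(X,Y)=G_0(X\times_\X Y)$, write $p\colon X\times_\X Y\to X$ and $q\colon X\times_\X Y\to Y$ for the two projections and set
$$
\alpha_\#\ :=\ q_*\circ(-\cdot\alpha)\circ p^*\ \colon\ F(X)\longrightarrow F(Y)\,.
$$
Each ingredient is available: $p^*$ because $X$ is a regular Noetherian scheme of finite Krull dimension, so $p$ (a base change of $Y\to\X$) is of finite Tor-dimension and all of $G_\bullet$, $\CH^*_\Q$ and real Deligne cohomology pull back along such morphisms; $q_*$ because $q$ is projective and each of these functors has a proper push-forward (a Gysin map in the cohomological case); and the middle product because $p^*x$, being pulled back from the regular scheme $X$, is represented by a perfect complex, so tensoring it over $\mathcal{O}_{X\times_\X Y}$ with the coherent class $\alpha$ lands back in $G_\bullet$ — and likewise, after splitting $\alpha$ into Adams eigencomponents so that it acquires a well-defined weight, one may cap $p^*x$ with $\alpha$ in $\CH^*_\Q$, respectively with $\mathrm{ch}(\alpha)$ (corrected by the Todd classes prescribed by Grothendieck--Riemann--Roch) in Deligne cohomology.

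First I would check that $\Gamma(f)_\#=f_*$ for a morphism $f\colon X\to Y$ of $\X$-varieties, the unit axiom being the case $f=\mathrm{id}_X$. The graph immersion $i\colon X\hookrightarrow X\times_\X Y$ satisfies $p\circ i=\mathrm{id}_X$, $q\circ i=f$, and $\Gamma(f)=i_*\mathcal{O}_X$; the projection formula for the closed immersion $i$ (applicable since $p^*x$ is perfect, so $i^*p^*x$ makes sense and equals $x$) gives $p^*x\cdot i_*\mathcal{O}_X=i_*(i^*p^*x)=i_*x$, whence $\Gamma(f)_\#(x)=q_*i_*x=f_*x$; in the Deligne-cohomology case this is exactly the identity making the Gysin map the value of the graph correspondence once the Todd factors above are in place. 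The only substantive point is compatibility with composition, $(\beta\circ\alpha)_\#=\beta_\#\circ\alpha_\#$: one unwinds the composition law of $\KC_\X$ — the $*$-product $\sum_i(-1)^i[\mathcal{T}\!or_i^{\mathcal{O}_Y}(-,-)]$ followed by $(p_{XZ})_*$ — and matches it term by term with the composite $q_*(-\cdot\beta)\,p^*\,q_*(-\cdot\alpha)\,p^*$ using the projection formula and the base-change isomorphism for the cartesian diagram built from $X\times_\X Y\times_\X Z=(X\times_\X Y)\times_Y(Y\times_\X Z)$. This is verbatim the computation carried out for varieties over a base scheme in \cite{Gillet-Soule-Wt-Cplxs-Arith-Vartys}, and it transcribes here precisely because, as remarked before the statement, fibre products over $\X$ of $\X$-varieties coincide with the corresponding fibre products over $Y$ after the first factor.

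It then remains to observe that the three functors carry the pull-backs, proper push-forwards, projection formulas and base-change isomorphisms used above. For $G_\bullet$ this is standard — Quillen localisation and dévissage, together with finiteness of Tor-dimension over a regular base. For $\CH^*_\Q=\mathrm{Gr}^*_\gamma G_0$ it follows from the statement for $G_0$: the correspondence action is built out of ring pull-backs, multiplication by a fixed class, and projective push-forwards, all compatible with the $\lambda$- and Adams operations up to the weight shifts recorded in Section \ref{G-K-stacks} — the very input used there to identify $\mathrm{Gr}^*_\gamma K_0(\X)$ with $\CH^*(\X)_\Q$ and to recover its product — so the action passes to the associated graded after tensoring with $\Q$. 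For real Deligne cohomology the pull-backs, Gysin maps, projection formula and base-change, together with a cycle/Chern-character map $G_0(W)_\Q\to\bigoplus_p H^{2p}_{\mathcal{D}}(W(\C),\mathbb{R}(p))$ letting $\alpha$ act, are furnished by the constructions of Gillet--Soulé and Burgos.

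The hard part is the composition axiom. There one must ensure that every derived tensor product occurring in the comparison — the $\mathcal{T}\!or^{\mathcal{O}_Y}$ defining composition, the derived pull-backs $Lp^*$, and the module actions — is bounded, which is exactly why the regularity of $\X$ (hence finite global Tor-dimension of $\mathcal{O}_\X$) and of all the $\X$-varieties is used throughout, and one must invoke the Tor-independence and base-change statements in the possibly singular fibre products $X\times_\X Y$. Everything else is formal, or a line-by-line transcription of the scheme-theoretic arguments of \cite{Gillet-Soule-Wt-Cplxs-Arith-Vartys}.
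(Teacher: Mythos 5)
Your proposal is correct and follows essentially the route the paper intends: the proof is omitted there with a pointer to the pattern of \cite{Gillet-Soule-Wt-Cplxs-Arith-Vartys}, where a correspondence $\alpha$ acts precisely by $q_*\bigl(p^*(-)\cdot\alpha\bigr)$ (with Adams eigencomponents for $\mathrm{Gr}^*_\gamma$ and $\mathrm{ch}(\alpha)$ with Todd correction for Deligne cohomology), and the composition axiom is checked via the projection formula, base change, and the identity $X\times_\X Y\times_\X Z\simeq(X\times_\X Y)\times_Y(Y\times_\X Z)$ that the paper records just before the statement for exactly this purpose. One minor correction: the finite Tor-dimension of $p$ is not a consequence of the regularity of $X$, but of the fact that $Y\to\X$ is a morphism between regular objects (factoring it through a projective bundle exhibits it as a regular immersion followed by a smooth morphism, and finite Tor-dimension is stable under the base change along $X\to\X$).
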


\begin{thm}
Suppose that $V.\to \X$ is a non-singular proper hypercover of the stack $\X$, then the complex $\Gamma(V.)$ is exact, except in degree zero, and
$\mathrm{H}_0(\Gamma(V.))$ is a direct summand of $\Gamma(V_0)$.  Furthermore, if $\pi:V.\to\X$ and $\pi':W.\to\X$ are two non-singular proper hypercovers of $\X$, and $f.:V.\to W.$
is a morphism over $\X$, then the induced map $\mathrm{H}_0(\Gamma(V.))\to \mathrm{H}_0(\Gamma(W.))$ is an isomorphism.
\end{thm}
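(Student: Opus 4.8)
The plan is to transfer the contracting-homotopy argument used in the proof of Theorem \ref{weight-complex-is-exact} into the category $\KC_\X$ of motives over the stack. First I would observe that the statement about a single hypercover $\pi:V.\to\X$ is essentially a relative version of that theorem: applying the functor $G_0(-\times_S W)$ in the earlier proof is replaced here by the intrinsic compositions in $\KC_\X$, and $\X$ itself plays the role that the base $S$ played before. So I would begin by constructing, inductively in $i\geq -1$ (setting $V_{-1}=\X$), $\KC_\X$-correspondences $h_i\colon V_i\to V_{i+1}$ that furnish a contracting homotopy of the complex $\Gamma(V.)$ above degree zero. Exactly as in the proof of Theorem \ref{weight-complex-is-exact}, the only subtlety is starting the induction from the stack $\X$ rather than from a scheme: by Lemma \ref{surjective} there is a class $\eta\in G_0(V_0)$ with $\pi_*(\eta)=[\mathcal{O}_\X]$, and by the projection formula and the flatness hypothesis this $\eta$ determines a splitting of $\pi_*\colon G_*(V_0)\to G_*(\X)$. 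The diagram
$$
\begin{CD}
V_0\times_\X V_0 @>q>> V_0\\
@VpVV @V\pi VV\\
V_0 @>\pi>> \X
\end{CD}
$$
together with $\theta=[\mathcal{O}_{V_0}{}^L\otimes_{\mathcal{O}_\X}\mathcal{O}_{V_0}]$ produces a projector in the group of $\KC_\X$-self-correspondences of $V_0$ whose image under $(1\times\pi)_*$ is $[\mathcal{O}_{\Gamma\pi}]$, and from that point the argument of \cite{Gillet-Soule-descent-motives} runs verbatim, giving exactness of $\Gamma(V.)$ in positive degrees and identifying $\mathrm{H}_0(\Gamma(V.))$ with a direct summand of $\Gamma(V_0)$ cut out by the idempotent.

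Next I would treat the comparison of two hypercovers. Given $\pi:V.\to\X$, $\pi':W.\to\X$ and a morphism $f.\colon V.\to W.$ over $\X$, the functoriality of $\Gamma$ (the second Lemma above) yields a map of complexes $\Gamma(V.)\to\Gamma(W.)$, hence a map $\mathrm{H}_0(\Gamma(V.))\to\mathrm{H}_0(\Gamma(W.))$. To see this is an isomorphism, I would note that both complexes are, up to homotopy, concentrated in degree zero (by the first part), so it suffices to check the induced map is an isomorphism after applying any conservative family of functors factoring through $\KC_\X$. By the Proposition just above, $G_0(-)_\Q$ — equivalently $\CH^*(-)_\Q$ via the $\gamma$-filtration — factors through $\KC_\X$, and on $G_0$ the statement is precisely the quasi-isomorphism $\pi_*\colon(i\mapsto G_0(V_i))\to G_0(\X)$ of Theorem \ref{weight-complex-is-exact} (and the same for $W.$), compatible with $f.$; these force the comparison map to be an isomorphism on $G_0$. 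Since $\KC_\X$ is, after idempotent completion, controlled by $G_0$ of the relevant fibre products, and since a map between objects of a $\Q$-linear idempotent-complete category that becomes an isomorphism on the representable functors $\KC_\X(T,-)$ is an isomorphism, I conclude the map on $\mathrm{H}_0$ is an isomorphism. (Alternatively, one can mimic the explicit homotopy-equivalence argument of \cite{Gillet-Soule-Wt-Cplxs-Arith-Vartys}: the contracting homotopies for $V.$ and $W.$ together with $\Gamma(f.)$ produce an explicit homotopy inverse.)

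The main obstacle is the first step: making sure the tor-product composition in $\KC_\X$ behaves well enough for the correspondences $h_i$ to compose and for the projection-formula identities to hold, given that the fibre products $V_i\times_\X V_j$ are not regular. This is exactly where the hypothesis that each $V_i$ is regular (so $\mathcal{O}_{V_i}$ has finite tor-dimension over $\mathcal{O}_\X$) is used, together with the associativity Lemma above; once those formal properties are in place, everything reduces to the scheme-theoretic computations already carried out in \cite{Gillet-Soule-descent-motives} and in the proof of Theorem \ref{weight-complex-is-exact}. I would also take care, in the base case, that the identity $(\pi_Y)_*((\pi_Y)^*\alpha\cdot(\pi_Y)^*\eta)=\alpha$ holds for $\alpha\in K_0(\X\times_\X Y)=K_0(Y)$ with $Y$ an arbitrary $\X$-variety, which again uses flatness of $\X$ over $S$ exactly as in the earlier proof; the rest is routine and I would omit it, as the paper does for the preceding theorems.
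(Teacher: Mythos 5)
Your proposal is correct and follows exactly the route the paper intends: the paper omits this proof, saying only that it follows the pattern of \cite{Gillet-Soule-Wt-Cplxs-Arith-Vartys} and of the earlier sections, and your transfer of the contracting-homotopy/projector argument of Theorem \ref{weight-complex-is-exact} into $\KC_\X$ --- starting the induction with the class $\eta$ from Lemma \ref{surjective} and the idempotent $\phi_*(\theta)q'^*(\eta)$, then running \cite{Gillet-Soule-descent-motives} --- is precisely that pattern. The one point to state more carefully is the hypercover comparison: ``isomorphism on $G_0$'' should be upgraded to ``isomorphism on the representable functors $\KC_\X(T,-)=G_0(T\times_\X -)$ for every regular $T$ projective over $\X$'' (which holds because $V.\times_\X T\to T$ is a proper hypercover of a scheme, so $G$-theoretic descent applies), after which Yoneda in the idempotent-completed $\Q$-linear category --- or your alternative explicit homotopy inverse --- finishes the argument.
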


\begin{cor} Suppose that $\X$ is a regular separated Deligne-Mumford stack over $S$, and $\pi:V.\to\X$ is non-singular proper hypercover.
There are natural isomorphisms, for all $p$ and $q$:
\begin{eqnarray*}
\mathrm{H}^p_\mathcal{D}(\X_\mathbb{R},\mathbb{R}(q))&\overset{\sim}{\longrightarrow}&
          \mathrm{H}^0(i\mapsto \mathrm{H}^p_\mathcal{D}((V_i)_\mathbb{R},\mathbb{R}(q)))\\
\pi^*:\CH^p(\X)_\Q&\overset{\sim}{\longrightarrow}&\mathrm{H}^0(i\mapsto \CH^p(V_i)_\Q)\\
\pi^*:\CH^{p,1}(\X)_\Q&\overset{\sim}{\longrightarrow}&\mathrm{H}^0(i\mapsto \CH^{p,1}(V_i)_\Q).
\end{eqnarray*}
Furthermore, these isomorphisms are compatible with both the cycle class maps
$$\gamma:\CH^p(-)_\Q\to \mathrm{H}^{2p}_\mathcal{D}(-,\mathbb{R}(p)))$$
and the regulator:
$$\rho:\CH^{p,1}(-)_\Q\to \mathrm{H}^{2p-1}_\mathcal{D}(-,\mathbb{R}(p))).$$
$$ $$

\end{cor}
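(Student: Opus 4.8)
The plan is to reduce all three isomorphisms, together with the stated compatibilities, to the preceding theorem on the complex of motives $\Gamma(V.)$ by means of the observation (the preceding proposition) that the functors in question factor through $\KC_\X$. By that theorem the complex $\Gamma(V.)$ is exact in $\KC_\X$ except in degree $0$, with $\mathrm{H}_0(\Gamma(V.))$ a direct summand of $\Gamma(V_0)$; since $\KC_\X$ is $\Q$-linear and idempotent-complete, and the proof of Theorem~\ref{weight-complex-is-exact} supplies an explicit contracting homotopy, this means that in the homotopy category of bounded complexes over $\KC_\X$ the complex $\Gamma(V.)$ is isomorphic to $\mathrm{H}_0(\Gamma(V.))$ placed in degree zero, and by the second half of the theorem this object does not depend on the choice of hypercover. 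Hence for any additive functor $\bar F$ on $\KC_\X$ with values in an abelian category, the cochain complex $(i\mapsto \bar F(V_i))$ has cohomology concentrated in degree $0$, equal to $\bar F(\mathrm{H}_0(\Gamma(V.)))$. Applying this to $\bar F = G_j$, $\bar F=\CH^p_\Q$, $\bar F=\CH^{p,1}_\Q$ (arguing as for $\CH^p_\Q$ but with $G_1$ in place of $G_0$) and $\bar F=\mathrm{H}^p_\mathcal{D}(-_\mathbb R,\mathbb R(q))$ yields $\mathrm{H}^0(i\mapsto F(V_i))\cong\bar F(\mathrm{H}_0(\Gamma(V.)))$, with the higher cohomology vanishing.

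It then remains to identify $\bar F(\mathrm{H}_0(\Gamma(V.)))$ with $F(\X)$, compatibly with $\pi^*$. For $G$-theory this is Theorem~\ref{weight-complex-is-exact} with $Y=S$. For $\CH^p(\X)_\Q$ and $\CH^{p,1}(\X)_\Q$ it follows by combining the $G$-theory case with the isomorphism $\mathrm{Gr}^p_\gamma K_0(\X)\cong\CH^p(\X)_\Q$ established earlier --- and its evident analogue in degree $1$ --- noting that the Adams eigenspace decomposition is respected by the comparison isomorphism. For real Deligne cohomology one uses section~\ref{Dolbeault}: the Dolbeault sheaves of $\X(\mathbb C)$ are acyclic and compute its cohomology by global complexes, so cohomological descent along the proper hypercover $V.(\mathbb C)\to\X(\mathbb C)$, applied to the relevant Deligne complex and combined with the degeneration just obtained, gives the isomorphism. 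Finally, the cycle class map $\gamma$ and the regulator $\rho$ are natural transformations of the corresponding functors on $\KC_\X$ --- they commute with the correspondence action, being morphisms of the associated cohomology theories --- so evaluating them on $\mathrm{H}_0(\Gamma(V.))$ produces the asserted compatibilities.

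I expect the main obstacle to be this last identification step, and in particular the base-case descent statement for the real Deligne cohomology of a stack that need not be proper, which is not covered by the earlier discussion of Burgos's complexes; here one must work directly with the acyclicity results of section~\ref{Dolbeault} and analytic cohomological descent, and check that the resulting comparison is functorial in the hypercover. A subsidiary point, not made explicit in the preceding proposition, is to verify that $\CH^{p,1}_\Q$ and the regulator $\rho$ factor through $\KC_\X$, which one does exactly as for $\CH^p_\Q$ and the cycle class map.
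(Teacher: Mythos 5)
The paper gives no proof of this corollary --- it is among the statements whose proofs are declared ``straightforward, following the pattern in \cite{Gillet-Soule-Wt-Cplxs-Arith-Vartys} and of the previous sections'' --- and your proposal is exactly that intended pattern: apply the additive functors of the preceding proposition to $\Gamma(V.)$, which by the preceding theorem is homotopy equivalent in $\KC_\X$ to $\mathrm{H}_0(\Gamma(V.))$ concentrated in degree zero and independent of the hypercover, and then identify the resulting $\mathrm{H}^0$ with the value on $\X$ via the augmentation.

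One citation in your base-case step needs repair: Theorem \ref{weight-complex-is-exact} assumes $\X$ is flat and \emph{proper} over $S$, whereas in section 5 the stack is only separated --- properness over $S$ is precisely the hypothesis being dropped there. The identification $G_j(\X)\simeq \mathrm{H}^0(i\mapsto G_j(V_i))$ should instead come from the proper-hypercover descent lemma for $\mathbf{G}$ in section \ref{G-K-stacks} (which requires no properness over $S$), whose descent spectral sequence degenerates once the rows are known to be exact away from degree zero by the section-5 theorem on $\Gamma(V.)$. With that substitution the argument goes through, and your identification of the real Deligne cohomology comparison as the delicate point is apt: the paper itself concedes, at the ends of sections 1 and 5, that it has no good global complex computing the Deligne cohomology of a non-proper stack, so the first isomorphism must be handled by sheaf-theoretic cohomological descent for the Deligne complexes (or taken as the definition of the left-hand side), exactly as you indicate.
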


\begin{thm}
The groups $\chat^*(\X)_\Q$ are contravariant with respect to $\X$ and have a natural product structure.
There are exact sequences:
\begin{multline*}
\mathrm{CH}^{p,p-1}(\mathfrak{X})_\Q\to H^{2p-1}_\mathcal{D}(\mathfrak{X}_\mathbb{R},\mathbb{R}(p))\\
\to\widehat{\mathrm{CH}}^p(\mathfrak{X})_\Q\to
\mathrm{CH}^p(\mathfrak{X})_\Q\oplus\widetilde{Z}_{\log}^{p,p}(\mathfrak{X}_\mathbb{R})
\to\mathrm{H}^{2p}_\mathcal{D}(\mathfrak{X}(\mathbb{C}),\mathbb{R}(p))
\end{multline*}
Here $\widetilde{Z}_{\log}^{p,p}(\mathfrak{X}_\mathbb{R})$ is the direct limit, over all proper surjective maps
$X\to\X$ of the groups ${Z}_{\log}^{p,p}({X}_\mathbb{R})$ of \cite{Burgos-Duke} and \cite{Burgos-JAG-Arith-chow-rings}.
\end{thm}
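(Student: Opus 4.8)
The plan is to reduce this final theorem to the results already established for the proper case together with the descent machinery of the category $\KC_\X$ developed in this section. The groups $\chat^*(\X)_\Q$ are defined as an inverse limit over non-singular proper hypercovers, so the first task is to recast that inverse limit as a $\mathrm H^0$ of a cosimplicial abelian group, exactly as in the equation displayed after the definition of $\chat^*(\X)_\Q$: for a fixed non-singular proper hypercover $\pi:V.\to\X$ we have $\chat^p(\X)_\Q = \mathrm H^0(i\mapsto \chat^p(V_i)_\Q)$, and one must check (using that any two such hypercovers are dominated by a third, and the homotopy-invariance statement of the preceding theorem on $\mathrm H_0(\Gamma(V.))$) that this does not depend on the choice of $\pi$. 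Contravariance in $\X$ is then immediate, since a morphism $\X\to\mathfrak Y$ lifts, after passing to a common refinement, to a morphism of non-singular proper hypercovers, and $\chat^*$ is already functorial on regular projective varieties over $S$.

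Next I would produce the product. For each level $i$ we have a functorial product on $\chat^*(V_i)_\Q$ from \cite{Gillet-Soule-IHES-arith-intn-thy}; these assemble into a product on the cochain complex $i\mapsto \chat^*(V_i)_\Q$ and hence, passing to $\mathrm H^0$, a product on $\chat^*(\X)_\Q$. The previous proposition tells us that all the relevant functors — $G_i$, $\CH^*_\Q$, $\CH^{*,1}_\Q$, and real Deligne cohomology — factor through $\KC_\X$, so that the descent isomorphisms of the preceding corollary hold with the appropriate compatibilities; this is what guarantees that the product on $\chat^*(\X)_\Q$ is well defined and associative, since it is built from the simplicially-coherent products on the $V_i$ exactly as the usual product is built from the $*$-product of Green currents on a single arithmetic variety.

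For the exact sequence, the strategy is to take the exact sequence of Proposition \ref{stack-exact-seq-prop} (equivalently \eqref{stack-exact-seq}) for each regular variety $X$ proper over $S$, tensor with $\Q$, and pass to the limit over all proper surjective $f:X\to\X$ with $X$ non-singular — equivalently, to $\mathrm H^0$ of the complex of such sequences attached to a non-singular proper hypercover. The outer four terms converge to the asserted groups by the descent isomorphisms of the corollary above: $\CH^{p,p-1}(\X)_\Q$ and $\CH^p(\X)_\Q$ and the Deligne cohomology groups are exactly the $\mathrm H^0$'s of the corresponding cosimplicial groups, and these identifications are compatible with $\gamma$ and $\rho$, which is precisely what one needs for the maps in the limiting sequence to be the cycle-class and regulator maps. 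The term involving differential forms requires Lemma \ref{inverse-limit-of-forms}, or rather its analogue for forms with logarithmic singularities: one \emph{defines} $\widetilde Z^{p,p}_{\log}(\X_\mathbb R)$ as the direct limit of the groups $Z^{p,p}_{\log}(X_\mathbb R)$, and the point is that on each $X$ one already has the Burgos description, so the limiting sequence is exact because a filtered colimit (dually, the $\mathrm H^0$ of the cosimplicial system) of exact sequences is exact.

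The main obstacle is the last point — the forms term. On a non-proper stack over $\mathbb C$ we have no intrinsic notion of a form with logarithmic growth, as is emphasized twice in the excerpt, so the group $\widetilde Z^{p,p}_{\log}(\X_\mathbb R)$ genuinely has to be manufactured as a colimit of its analogues on the resolving varieties, and one must check that this colimit is the ``correct'' recipient of the differential-form component of an arithmetic cycle: concretely, that for a non-singular proper hypercover $V.\to\X$ the natural map from $\widetilde Z^{p,p}_{\log}(\X_\mathbb R)$ to $\mathrm H^0(i\mapsto Z^{p,p}_{\log}((V_i)_\mathbb R))$ is an isomorphism, which amounts to a descent statement for Burgos's complexes along the proper hypercovers. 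Since the Burgos complexes on the $V_i$ compute real Deligne cohomology, and we have already established descent for Deligne cohomology via $\KC_\X$, this comes down to tracking the short exact sequences relating $Z^{p,p}_{\log}$ to Deligne cohomology through the colimit; I expect this to be the only place requiring real care, the remaining verifications being routine adaptations of the arguments of \cite{Gillet-Soule-IHES-arith-intn-thy} and \cite{Burgos-JAG-Arith-chow-rings}.
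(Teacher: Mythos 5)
Your overall strategy --- take the Burgos five-term exact sequence on each regular variety proper and surjective over $\X$, use the descent corollary for the four outer terms, and manufacture the forms term as a direct limit --- is exactly the route the paper intends; in fact the paper omits this proof entirely, declaring the results of this section ``straightforward, following the pattern of the previous sections.'' But two of your reductions are unjustified, and one of them as stated would fail. First, you assert that a single non-singular proper hypercover already computes the answer, $\chat^p(\X)_\Q=\mathrm{H}^0(i\mapsto\chat^p(V_i)_\Q)$, with only independence of the choice of $V.$ left to check. This is in tension with the very shape of the statement you are proving: $\widetilde{Z}^{p,p}_{\log}(\X_\mathbb{R})$ is a direct limit over \emph{all} proper surjective $X\to\X$ precisely because $Z^{p,p}_{\log}$ does not satisfy descent along a fixed hypercover --- refining the cover strictly enlarges the group of available logarithmic forms. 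Since the forms component $dd^c g+\delta_\zeta$ is an invariant of a class in $\chat^p$, the groups $\mathrm{H}^0(i\mapsto\chat^p(V_i)_\Q)$ genuinely vary with the hypercover, and the outer inverse limit in the paper's definition cannot be discarded.

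Second, your justification of exactness --- ``a filtered colimit (dually, the $\mathrm{H}^0$ of the cosimplicial system) of exact sequences is exact'' --- conflates colimits with limits. The middle term $\chat^p(\X)_\Q$ is an \emph{inverse} limit, and neither inverse limits nor $\mathrm{H}^0$ of cosimplicial groups preserve exactness of five-term sequences; only the $\widetilde{Z}_{\log}$ term is a filtered, hence exact, colimit. What actually forces exactness is the $\KC_\X$ machinery: every transition map in the system is a split injection, and the splittings are organized by the contracting homotopy of $\Gamma(V.)$, so the outer four cosimplicial groups do not merely have the right $\mathrm{H}^0$ but retract onto it compatibly with the cycle class and regulator maps; this is what licenses the diagram chase. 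Even granting that, exactness at $\CH^p(\X)_\Q\oplus\widetilde{Z}^{p,p}_{\log}(\X_\mathbb{R})$ still requires reconciling the inverse limit with the direct limit: a log form given on some $X_0\to\X$ whose Deligne class descends must be completed to a \emph{compatible family} over all $X\to\X$, which is only possible after adjusting by $\partial$- and $\overline{\partial}$-exact terms absorbed into the Green currents. You correctly single out the forms term as the delicate point, but the argument that handles it is the substance of the proof rather than a routine verification.
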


Unfortunately, I not know whether the complex $\widetilde{A}_{\log}^{*,*}(\mathfrak{X}(\mathbb{C}))$ which is the
direct limit over all proper surjective maps
$X\to\X$ of the logarithmic Dolbeault complexes ${A}_{\log}^{*,*}({X}(\mathbb{C}))$ of
\cite{Burgos-JAG-Arith-chow-rings} will compute the Dolbeault cohomology of $\X(\mathbb{C})$,
and thus the question of constructing a nice complex computing the real Deligne cohomology of
$\X(\mathbb{C})$ is open.

\end{document}